\newcommand{\btimes}{\mathbin{\rotatebox[origin=c]{90}{$\Join$}}}
 \providecommand{\frS}{{\mathfrak S}}
\providecommand{\phidx}{\ensuremath{{\phi^*(\cdot)}}}
\begin{document}

\author[A.  Balci]{Anna  Balci}
\address[A. Kh. Balci]{Charles University  in Prague, Department of Mathematical Analysis
Sokolovsk\'a 49/83, 186 75 Praha 8, Czech Republic and University of Bielefeld, Department of Mathematics,  Postfach 10 01 31, 33501 Bielefeld, Germany.}
\email{akhripun@math.uni-bielefeld.de}
\urladdr{http://www.bi-discrete.com}
\thanks{The research of A. Kh. Balci and L. Diening is funded by the Deutsche Forschungsgemeinschaft (DFG, German Research Foundation) --- SFB 1283/2 2021 --- 317210226.}
\thanks{The Research of Anna Kh. Balci was also  supported by Charles University  PRIMUS/24/SCI/020 and Research Centre program No. UNCE/24/SCI/005}

\author[L. Diening]{Lars Diening}
\address[L. Diening]{Department of Mathematics, University of Bielefeld, Postfach 10 01 31, 33501 Bielefeld, Germany.}
\email{lars.diening@math.uni-bielefeld.de}
\urladdr{http://www.bi-discrete.com}

\author[M. Surnachev]{Mikhail Surnachev}
\address[M.Surnachev ]{Keldysh Institute of Applied Mathematics, Miusskaya sq., 4 Moscow, 125047, Russia}
\email{peitsche@yandex.ru}
\thanks{Research of Mikhail Surnachev is supported by Moscow Center of Fundamental and Applied Mathematics, Agreement with the Ministry of Science and Higher Education of the Russian Federation, No. 075-15-2022-283.}

\title{Scalar minimizers with maximal singular sets and lack of Meyers property}
\subjclass[2000]{35R11; 47G20; 46E35}

\keywords{Singular minimizers; double phase; variable exponent}


\begin{abstract}
  We present a general procedure to construct examples of convex scalar variational problems which admit a minimizers with large singular sets. The dimension of the set of singularities is maximal and the minimizer has no higher integrability property (failure of Meyers property).
\end{abstract}

\maketitle
{\centering\footnotesize Dedicated to Nina Nikolaevna Uraltseva \par}


\section{Introduction}

We construct new examples of irregular minimizers for nonautomous integrands with non-standard growth conditions
$$
\mathcal{F}(u):=\int\limits_{\Omega} \phi(x,\abs{\nabla u})\, dx \to \min.
$$
Here $u$ takes prescribed (smooth) boundary value on $\partial \Omega$, where $\Omega$ is a bounded Lipschitz domain in $\mathbb{R}^d$. Our examples of potentials with non-standard growth include the double phase energy $\phi(x,t) = t^p+a(x)t^q$  and the $p(x)$-energy $\phi(x,t)=t^{p(x)}$.  Fonseca, Mal\'y and Mingione studied in \cite{FonMalMin04} the size of possible singular sets for minimizers of the double phase potential. They  constructed for every~$\varepsilon>0$ a minimizer for a double phase energy with $a\in C^\alpha(\Omega)$ and 
\begin{align}\label{dimensional_threshold}
1 < p < d < d+\alpha < q < \infty,
\end{align}
which has the Hausdorff dimension of the set of non-Lebesgue points larger than $d-p-\varepsilon$. This result is almost sharp in the sense that the Hausdorff dimension of the singular set of any $W^{1,p}(\Omega)$ function is at most $d-p$.

In this paper we improve these results and extend them also to other models. 
%
First, we construct a double phase energy and a minimizer with the optimal Hausdorff dimension of the singular set~$\Sigma$ in the sense that 
\begin{align*}
 \dim_{\mathcal{H}}(\Sigma)=d-p.
\end{align*}
This is the largest possible value for any $W^{1,p}(\Omega)$ function. In such a way we get rid of the $\varepsilon>0$ in~\cite{FonMalMin04}.  Second, we get rid of the dimensional threshold~$p<d<q$ by using the fractal Cantor set barriers from~\cite{BalDieSur20lavrentiev}. Our results hold in the range
\begin{align*}\
q > p+\alpha \max \left\{1, \frac{p-1}{d-1}\right\}.
\end{align*}
These bounds are sharp in view of the regularity results of \cite{ColMin15,BarColMin18} as explained in~\cite[Remark~35]{BalDieSur20lavrentiev}. For $1<p\leq d$ we obtain a minimizer with a large singular set, and for all $p>1$ we obtain minimizers without Meyers property. This means that~$\nabla v\not\in L^s(\Omega)$ for any~$s>p$. 

 
In~\cite{Zhi86}, \cite{Zhi95} Zhikov for the two-dimensional case and $p<2<q$ constructed an example exhibiting the Lavrentiev phenomenon, i.e. 
\begin{equation}\label{Lavr}
 \inf  \mathcal{F}(W^{1,\phi(\cdot)}(\Omega)) < \inf  \mathcal{F}(C_0^\infty(\Omega)).
\end{equation}


This example, known in the literature as one-saddle point example, allows also to  to construct examples of singular minimizers of functionals with coefficients having a cone-type singularity. Later this construction was used in~\cite{EspLeoMin04} to design one-saddle point examples for any dimension under the condition~\eqref{dimensional_threshold}. In~\cite{FonMalMin04} Fonseca, Mingione and Mal{\'y}  used this set-up and  an iterative process to  build local minimizers having fractal singular sets under the same condition~\eqref{dimensional_threshold}. The restriction $p<d<q$ is known as \textit{the dimensional threshold}.

Let us mention  higher regularity results for double phase model. In fact, if $\frac{q}{p} \leq 1+\frac \alpha{d}$
and $a \in C^{0,\alpha}$, then minimizers of~$\mathcal{F}$ are
automatically in~$W^{1,q}$, see \cite{ColMin15}. Moreover, bounded
minimizers of~$\mathcal{F}$ are automatically~$W^{1,q}$
if~$a \in C^{0,\alpha}$ and~$q \leq p+\alpha$,
see~\cite{BarColMin18}. If the minimizer is from~$C^{0,\gamma}$, then
the requirement can be relaxed to $q \leq p + \frac{\alpha}{1-\gamma}$, see 
\cite[Theorem~1.4]{BarColMin18}.
 
In~\cite{BalDieSur20lavrentiev} the authors of the present paper presented new examples for the Lavrentiev gap and non-density of smooth functions without the dimensional threshold restriction by using fractals. 
We applied our method to the setting of variable exponents~$\phi(x,t)=t^{p(x)}$, the double phase potential and weighted integrand~$\phi(x,t)=a(x)t^2$. Later Balci and Surnachev in~\cite{BalSur21} applied the same construction to the model
\begin{align*}
\phi(x,t)=t^p\log^{-\beta}(e+t)+a(x)t^p\log^\alpha(e+t).
\end{align*}
The regularity properties of the integrand of this type for~$\Phi(x,t)$ with~$\beta=0$ were studied firstly by Baroni,  Colombo,  Mingione in~\cite{BarColMin15} where it  was called~\textit{the borderline case of double phase potential}. In particular, they obtained the ~$C^{0,\gamma}_{\loc}$ regularity result for the minimizers provided that the weight~$a(x)$ is~$\log$-H\"older continuous (with some~$\gamma$) and a more strong result (any~$\gamma\in (0,1)$) for the case of vanishing~$\log$-Hölder continuous  weight.

The variable exponent model~$\phi(x,t)=t^{p(x)}$ was studied  in numerous papers. We refer to the monographs~\cite{Zhi11,DieHHR11,CruFio13,KokMesRafSam16}. The results (with a few exceptions) mainly concern the case of log-H\"older exponents. For the variable exponent model the Hölder regularity of solutions, a Harnack type  inequality for non-negative solutions, and boundary regularity results were obtained in \cite{Alk97,Alk05,AlkKra04}. Gradient regularity for Hölder exponent was obtained by  \cite{CosMin99} and for the log-Hölder exponents by \cite{AceMin01} and Ok~\cite{Ok17}. These papers as well as~\cite{Zhi97Meyers} also contain Meyer type properties. For a class of exponents with the modulus of continuity slightly worse than  $\log$-H\"older, a logarithmic Meyers property was obtained in~\cite{ZhiPas08}.

Let us also mention, that computing such type of minimizers  numerically is a challenging problem, since the
standard numerical schemes fail to converge to the~$W$-minimiser of the problem. For the scalar case the problem could be solved using non-conforming methods, see Balci, Ortner, and Storn~\cite{BalOrtSto22}.
 
\subsection{Models}

We consider integral functionals of the form 
$$
\mathcal{F}(u) = \int\limits_\Omega \varphi(x,|\nabla u|)\, dx
$$
and treat the following models.
\begin{itemize}[nosep]
\item  The double phase energy: for $1<p<q<\infty$ and a nonnegative weight $a=a(x)$ we define 
\begin{equation}\label{dblphasedef}
\begin{gathered}
\varphi(x,t) = \frac{t^p}{p} + a(x) \frac{t^q}{q},\\
\mathcal{F}(u) = \int\limits_\Omega \bigg(\frac{|\nabla u|^{p}}{p} + a(x) \frac{|\nabla u|^q}{q}\bigg)\, dx.
\end{gathered}
\end{equation}
\item The borderline double phase: for $1<p<\infty$, a nonnegative weight $a=a(x)$, and $\alpha,\beta \in \mathbb{R}$ we define 
\begin{equation}\label{dbl2phasedef}
\begin{gathered}
\varphi(x,t) = t^p \log^{-\beta}(e+t) + a(x) t^p \log^\alpha(e+t),\\
\mathcal{F}(u) = \int\limits_\Omega |\nabla u|^p\bigl(\log^{-\beta}(e+|\nabla u|) + a(x) \log^\alpha (e+|\nabla u|) \bigr)\, dx.
\end{gathered}
\end{equation}
 \item  The $p(\cdot)$-energy: for $1<p_{-}<p_{+}<\infty$ let $p:\Omega \to [p_{-},p{+}]$ be a measurable function. Then we define 
\begin{equation}\label{varexpdef}
\varphi(x,t) = \frac{t^{p(x)}}{p(x)}, \quad \mathcal{F}(u) = \int\limits_\Omega \frac{|\nabla u|^{p(x)}}{p(x)}\, dx.
\end{equation}
\end{itemize}
 
 \subsection{Main Results}
 
Let us state our main results. Let $\Omega = (-1,1)^d$.

\begin{theorem}[double phase] \label{Theorem1}
For every 
$$
p>1,\quad \alpha\geq 0\quad \text{and}\quad q> p+\alpha \max \biggl\{1, \frac{p-1}{d-1}\biggr\}
$$ 
there exist a functional~$\mathcal{F}$ of the form \eqref{dblphasedef} with the weight~$a\in C^\alpha(\overline\Omega)$,~$a\ge 0$, and a local minimizer~$u\in W^{1,p}(\Omega)$ of the functional~$\mathcal{F}$ such that
$u\notin W^{1,s}$ for any~$s>p$.

If $p<d$, then there exists a closed set~$\Sigma\subset\Omega$ of non-Lebesgue points of $u$ with
$ \dim_{\mathcal{H}}(\Sigma)=d-p$.

\end{theorem}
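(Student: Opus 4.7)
The strategy is to iterate the fractal Cantor--barrier construction of~\cite{BalDieSur20lavrentiev} across infinitely many scales, tuning the geometric parameters so that the non-Lebesgue set realises the maximal Hausdorff dimension $d-p$ (rather than $d-p-\varepsilon$ as in~\cite{FonMalMin04}), while keeping $a \in C^\alpha$ in the sharp range of~$q$. First I would fix a self-similar Cantor scheme: at stage $k$ one retains $N_k$ disjoint axis-parallel cubes of side~$r_k$, with
$$
\frac{\log(N_1\cdots N_k)}{-\log r_k} \longrightarrow d-p.
$$
Inside each stage-$k$ cube one plants a rescaled copy of the saddle-type building block of~\cite{BalDieSur20lavrentiev}: two potential wells joined by a thin \emph{corridor} across which $a$ is forced to vanish. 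The candidate $u$ is defined cell-by-cell as a $p$-harmonic extension of prescribed traces on the complement of the corridors, and across each corridor by the unique admissible profile which concentrates $|\nabla u|$ precisely at the $L^p$-integrability threshold. The weight $a$ is taken to be positive outside the corridors and to vanish at the boundary of each corridor at a rate which is the only free parameter controlling its Hölder modulus.

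The core of the proof splits into three verifications. \emph{(a)}~That $a \in C^\alpha(\overline{\Omega})$: balancing the Hölder cost of the transition against the energy gain from the $q$-growth term yields the sharp scaling
$$
q - p > \alpha \max\biggl\{1, \tfrac{p-1}{d-1}\biggr\},
$$
where the maximum reflects the two extremal corridor geometries (isotropic shrinkage versus thin anisotropic necks). \emph{(b)}~That $\nabla u \in L^p(\Omega) \setminus \bigcup_{s>p} L^s(\Omega)$: by choosing the amplitudes of the saddle blocks to sit exactly on the $p$-integrability threshold, so that the geometric decay of the $p$-energies across stages sums while every $L^s$-seminorm with $s>p$ diverges already at a single corridor. \emph{(c)}~That $u$ is a local minimizer: one constructs a divergence-free dual flux $\sigma \in \partial_\xi \varphi(x, \nabla u)$ cell-by-cell, glues it across the corridors using $a = 0$ there (so the flux is not constrained by the $q$-term on the corridor), and then concludes by the convex inequality $\varphi(x,\nabla v) \geq \varphi(x,\nabla u) + \sigma\cdot\nabla(v-u)$ integrated against any admissible competitor $v$ that agrees with $u$ outside a compactum.

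The main obstacle is verification~(a): the Hölder quotient of $a$ must be controlled \emph{uniformly} across all generations accumulating onto~$\Sigma$, which forces a delicate compatibility relation between the corridor widths, the well depths, and the ratios $r_{k+1}/r_k$. Passing from a fixed contraction factor (as in~\cite{FonMalMin04}) to a geometrically tuned one across generations is what removes the loss of~$\varepsilon$ in the dimension bound. The claim $\dim_{\mathcal{H}}(\Sigma) = d-p$ then follows for $p < d$ from the Moran construction together with the mass-distribution principle (the upper bound $\dim_{\mathcal{H}}(\Sigma) \leq d-p$ being automatic since $u \in W^{1,p}$), and the failure of the Meyers property is a direct consequence of verification~(b), valid for every $p > 1$.
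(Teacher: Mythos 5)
Your plan takes the FonMalMin04 route: build the minimizer explicitly, cell by cell, from rescaled saddle-type blocks, then certify local minimality via a calibration (a divergence-free flux $\sigma\in\partial_\xi\varphi(x,\nabla u)$ and the convexity inequality). The paper does something structurally different: it never exhibits the minimizer. It fixes a competitor $u$ (the half-cone function on a fractal Cantor barrier) and an auxiliary dual test field $b$, verifies the two quantities $\mathcal F(\eta u)$ and $\mathcal F^*(sb)$ are jointly small relative to $\eta s$ (Assumption~\ref{ass:harmonic2}), and then uses this duality bound to control the restricted Riesz potential $\int I_1^{\btimes}(|\nabla v_\eta|)\,d\mu$ of the \emph{abstract} argmin $v_\eta$. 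That Riesz bound forces $v_\eta$ to retain the jump discontinuity of $\eta u$ across a $\mu$-large subset of the Cantor set, which directly gives the singular set and the failure of higher integrability. The object $b$ is not a calibration for $u$; it is a separating functional in the dual space, and $u$ itself is not claimed to minimize.

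There is a genuine gap in your verification (c). Constructing a divergence-free flux $\sigma(x)=\varphi'(x,|\nabla u|)\,\nabla u/|\nabla u|$ for an explicitly glued $u$ is exactly the requirement that $u$ solve the Euler--Lagrange equation. This is precisely what forces the dimensional threshold $p<d<q$ in the one-saddle-point construction of Zhikov and in~\cite{FonMalMin04}: the saddle geometry needs both a sub- and a super-dimensional cone, and flux-matching across the corridor pins the exponents around $d$. You assert the sharp range $q>p+\alpha\max\{1,(p-1)/(d-1)\}$ but offer no mechanism for making the calibration close when $q\leq d$ or $p\geq d$. The paper sidesteps this entirely by not calibrating at all. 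A second, smaller gap: the Moran scaling $\log(N_1\cdots N_k)/(-\log r_k)\to d-p$ is not enough by itself to get $\nabla u\in L^{p}$ (as opposed to $L^{p-\varepsilon}$). The paper needs the precise logarithmic correction $l_j=\lambda^j j^\gamma$ with $\gamma<-1/\frD$ (Lemma~\ref{L:u}); the statement that ``tuning $r_{k+1}/r_k$ across generations removes the $\varepsilon$'' glosses over the specific integrability computations (Lemma~\ref{L:integr_est}) that make this work, and over the companion conditions on $\gamma$ that keep $b^{q'}$ integrable. Without those two ingredients -- the indirect duality argument in place of a calibration, and the $j^\gamma$ correction with the correct sign of $\gamma$ -- your plan does not deliver the stated range or the stated sharp dimension.
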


Theorem~\ref{Theorem1} follows from Theorems~\ref{T:dbl_sub}, \ref{T:dbl_super} in the sub- and super dimensional cases, respectively.

\begin{theorem}[borderline double phase]\label{Theorem2}
For every choice of the parameters
\begin{equation}\label{dbl2ass}
p>1,\quad \alpha,\beta \in \mathbb{R},\quad \varkappa \geq 0,\quad \alpha+\beta>p+\varkappa
\end{equation}
there exist a functional~$\mathcal{F}$ of the form \eqref{dbl2phasedef} where~$a(\cdot)$ has a modulus of continuity $C\log^{-\varkappa} (e+\rho^{-1})$, and a local minimizer~$u\in W^{1,p}(\Omega)$ of the functional~$\mathcal{F}$ such that $u\notin W^{1,s}$ for any~$s>p$.

If $1<p<d$, then there exists a closed set~$\Sigma\subset\Omega$ of non-Lebesgue points of $u$  with $ \dim_{\mathcal{H}}(\Sigma)=d-p$.
\end{theorem}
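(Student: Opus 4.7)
The plan is to adapt the fractal Cantor barrier construction of \cite{BalDieSur20lavrentiev}, as extended in \cite{BalSur21} to logarithmic integrands, to the borderline double phase functional \eqref{dbl2phasedef}. The inequality $\alpha+\beta>p+\varkappa$ plays exactly the role that $q-p>\alpha\max\{1,(p-1)/(d-1)\}$ played in the pure double phase setting: it measures the effective ``gap'' between the two phases once the allowed modulus of continuity of $a$ is discounted. In parallel to the statement of Theorem~\ref{Theorem1}, I would split into a \emph{sub-dimensional} case $1<p<d$ (fractal of dimension $d-p$) and a \emph{super-dimensional} case $p\ge d$ (where only the non-Meyers part is claimed), and use essentially the same blueprint for both.

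First, construct a self-similar Cantor set $\Sigma\subset\Omega$ of Hausdorff dimension $d-p$ (in the sub-dimensional case), given by generations $\mathcal{G}_k$ of $N_k$ disjoint cubes of sidelength $r_k$ with $N_k r_k^{d-p}\asymp 1$, chosen so that $\log(r_k^{-1})$ grows exponentially in $k$. On each generation-$k$ cube I would place a radial saddle profile (as in the one-saddle-point Zhikov construction of \cite{Zhi86,EspLeoMin04}) that is zero near the cube boundary and constant on the inner region housing the next generation, with the heights $c_k$ tuned so that the unweighted part $\int|\nabla u|^p\log^{-\beta}(e+|\nabla u|)\,dx$ of the energy is finite and $u\in W^{1,p}(\Omega)$, while $|\nabla u|\sim r_k^{-1}$ in the transition annuli, forcing $\nabla u\notin L^s$ for any $s>p$.

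Next, define the weight $a$ to vanish on $\Sigma$ and interpolate smoothly between generations, engineered so that $a(x)\asymp \log^{-\varkappa}(e+\mathrm{dist}(x,\Sigma)^{-1})$; this is exactly the threshold modulus of continuity $C\log^{-\varkappa}(e+\rho^{-1})$ allowed by the hypothesis. On the transition annulus of the $k$-th generation one then has
\[
a(x)\,|\nabla u|^p\log^\alpha(e+|\nabla u|)\ \gtrsim\ (\log r_k^{-1})^{-\varkappa}\cdot r_k^{-p}\cdot(\log r_k^{-1})^\alpha,
\]
whereas the unweighted piece carries the compensating factor $(\log r_k^{-1})^{-\beta}$. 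The condition $\alpha+\beta>p+\varkappa$ is chosen precisely so that, summed over generations, any smooth regularisation of $u$ has \emph{divergent} weighted energy, producing the Lavrentiev gap \eqref{Lavr} on suitable boundary data; by the $H\ne W$ machinery, the $W^{1,\varphi(\cdot)}$-minimizer must then coincide with the singular $u$ rather than with any $C^\infty$-approximation, and one upgrades this to local minimality in $W^{1,p}$ by comparison with test functions supported in a single generation cube.

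The Hausdorff dimension statement $\dimH(\Sigma)=d-p$ is then immediate: the upper bound is built into the Cantor construction, and the lower bound $\dimH(\Sigma)\ge d-p$ follows because any $W^{1,p}$ function has a non-Lebesgue set of dimension at most $d-p$, so our singular set saturates this bound. The main obstacle I expect is the simultaneous calibration of the three logarithmic scales: the sizes $c_k$ of the saddle profiles, the speed at which $r_{k+1}/r_k\to 0$, and the interpolation profile of $a$, must be chosen so that (i) the unweighted $\log^{-\beta}$-energy converges, (ii) the weighted $\log^{\alpha}$-energy of any mollification diverges, and (iii) the weight still satisfies the prescribed $\log^{-\varkappa}$ modulus of continuity — the strict inequality $\alpha+\beta>p+\varkappa$ leaves only a finite window, and the calculations in \cite{BalSur21} have to be carried out with the extra care that no polynomial slack is available to absorb error terms.
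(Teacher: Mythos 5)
Your heuristics for why the threshold $\alpha+\beta>p+\varkappa$ should be the right one are in the right spirit, but the core of the argument is missing, and one key logical step is actually wrong.

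The step ``by the $H\ne W$ machinery, the $W^{1,\varphi(\cdot)}$-minimizer must then coincide with the singular $u$ rather than with any $C^\infty$-approximation'' does not follow. The Lavrentiev gap $\inf\mathcal{F}(W^{1,\phi(\cdot)}_g)<\inf\mathcal{F}(H^{1,\phi(\cdot)}_g)$ tells you that the $W$-minimizer $v$ is not the $H^{1,\phi(\cdot)}$-limit of smooth functions; it does \emph{not} tell you that $v=u$, that $v$ is discontinuous at any particular point, or that $v\notin W^{1,s}$ for $s>p$. The $W$-minimizer is a genuinely different function from the competitor $u$, and a priori it could even be more regular than $u$. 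The whole technical content of the paper is precisely to rule this out: it shows via a duality/trace argument that the $W$-minimizer inherits a jump across the hyperplane $\{x_d=0\}$ on a $\mu$-large subset of the Cantor set. Concretely, one introduces an auxiliary vector field $b$ supported in a cone over $\frC^{d-1}_{\lambda,\gamma}$, estimates $\int_\Omega|\nabla v_\eta|\,b\,dx$ by Young's inequality through $\mathcal{F}(v_\eta)+\mathcal{F}^*(sb)\leq\mathcal{F}(\eta u)+\mathcal{F}^*(sb)$, and uses restricted Riesz potentials (Lemma~\ref{lem:restricted-riesz}) plus the Cantor-measure decay estimates (Lemma~\ref{lem:cantor-estimates}) to show that the upper and lower traces of $v_\eta$ on $\frC^{d-1}_{\lambda,\gamma}\times\{0\}$ must differ by almost $\eta$ on a set of Cantor measure close to $1$ (Theorem~\ref{T:diff_trace}). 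Your proposal contains no substitute for this argument, and without it the deduction that the minimizer is singular simply does not go through.

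Second, your actual geometric construction (self-similar generations of cubes, each carrying its own Zhikov-type saddle, with heights $c_k$ and ratios $r_{k+1}/r_k$ tuned generation by generation) is essentially the iterative scheme of \cite{FonMalMin04}. The paper explicitly identifies that scheme as the one that loses an $\varepsilon$ in the Hausdorff dimension ($\dim_\mathcal{H}\Sigma>d-p-\varepsilon$) and replaces it with a single competitor $u=\frac12\sgn(x_d)\,\rho(x)$, where $\rho$ is a smooth approximation of $\indicator_{\{d(\bar x,\frC)\lesssim|x_d|\}}$, together with Cantor measure duality. Relatedly, your Hausdorff-dimension paragraph has the inequality backwards: the fact that a $W^{1,p}$ function has non-Lebesgue set of dimension at most $d-p$ is the \emph{upper} bound on $\dim_\mathcal{H}\Sigma$, not the lower bound. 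The lower bound $\dim_\mathcal{H}\Sigma\ge d-p$ is exactly what needs to be proven, and in the paper it comes from Theorem~\ref{T:diff_trace} (the minimizer jumps on a $\mu$-large subset of $\frC$) combined with Frostman's lemma (Corollary~\ref{cor:regul}). None of this is ``immediate'' from the Cantor construction; it is the heart of the result. Finally, the parameter you need to calibrate is not the saddle heights but the logarithmic correction $\gamma$ in the lengths $l_j=\lambda^j j^\gamma$ of the Cantor set: the condition $\alpha+\beta>p+\varkappa$ enters precisely as the nonemptiness of the admissible interval $\varkappa-\alpha+p_0-1<\gamma\nu<\beta-1$ needed to make both $\mathcal{F}(u)$ and $\mathcal{F}^*(b)$ finite.
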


Theorem~\ref{Theorem2} follows from Theorems~\ref{T:bdd_sub}, \ref{T:bdd_super} in the sub- and super dimensional cases, respectively.

\begin{theorem}[piecewise constant variable exponent]\label{Theorem3}
For any $1<p_{-}<p_{+}<\infty$ there exists a variable exponent $p:\Omega\to \{p_{-},p_{+}\}$ such that for the functional $\mathcal{F}$ defined by \eqref{varexpdef} there exists a local minimizer $u$ with $u\notin W^{1,s}$ for any~$s>p_{-}$. 

If $p_{-}< d$, then there exists a closed set~$\Sigma\subset\Omega$ of non-Lebesgue points of $u$ with $\dim_{\mathcal{H}}(\Sigma)=d-p_{-}$.
\end{theorem}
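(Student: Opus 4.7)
The plan is to adapt the fractal Cantor-barrier construction of~\cite{BalDieSur20lavrentiev} (used there to produce Lavrentiev gaps without the dimensional threshold) to the piecewise-constant variable exponent setting. The case $1 < p_- < d$ is the main one; the super-dimensional case $p_- \geq d$ is treated by the same mechanism, only localised near a lower-dimensional set so that the Hausdorff statement becomes vacuous and only the $W^{1,s}$ failure needs verification.

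For $p_- < d$, I would first fix a self-similar Cantor set $\Sigma \subset \Omega$ of Hausdorff dimension $d - p_-$ by iteratively subdividing $\Omega$ into cubes of side length $\lambda_n$ and retaining $N_n$ of them at each generation, with $N_n \lambda_n^{d-p_-} \asymp 1$. Around each surviving cube I would place a two-phase configuration: the exponent is set to $p(x) = p_+$ on ``saddle'' regions that force rigidity of any competitor, and to $p(x) = p_-$ on thin ``fibers'' connecting opposite faces of the cube, which are the only channels along which a large gradient can live cheaply. The candidate minimizer is then defined as the telescoping sum $u = \sum_n u_n$, where each $u_n$ is supported on the generation-$n$ cells, is piecewise affine along the $p_-$-fibers, and is constant on the $p_+$-phase; the jump heights and fiber widths are tuned so that $\sum_n \int |\nabla u_n|^{p_-} < \infty$ while $\sum_n \int |\nabla u_n|^s = \infty$ for every $s > p_-$.

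The hard part is verifying that such a $u$ is an actual local minimizer and not merely a barrier between different admissible classes. Here I would use the duality method of~\cite{BalDieSur20lavrentiev}: exhibit, at each scale, a divergence-free vector field $\sigma_n$ satisfying the Euler--Lagrange relation $\sigma_n = |\nabla u_n|^{p(x)-2}\nabla u_n$ on the support of $u_n$, and then telescope to a global $\sigma$ in the dual space $L^{p'(\cdot)}$ with $\operatorname{div} \sigma = 0$ distributionally. The compatibility across the interface between the $p_-$- and $p_+$-phases is the delicate point, because both the primal gradient and the dual field must match on surfaces where the constitutive law jumps; the self-similarity of the Cantor construction is what makes this matching possible by reducing it to a single scale-invariant estimate.

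Finally, the failure of Meyers's property follows directly from the chosen scaling, since the series $\sum_n \|\nabla u_n\|_{L^s}^s$ diverges for every $s > p_-$ while converging at $s = p_-$. For the dimension of the singular set, every $x \in \Sigma$ is a non-Lebesgue point of $u$ because the oscillation of $u$ over cells of generation $n$ containing $x$ stays bounded below by the prescribed jump height, independent of $n$. The upper bound $\dim_{\mathcal{H}}(\Sigma) \leq d - p_-$ is immediate from the covering by the surviving cubes, and the matching lower bound follows by a Frostman mass-distribution argument applied to the natural self-similar measure supported on $\Sigma$. As announced, the theorem is then deduced from the two component statements covering the sub- and super-dimensional regimes.
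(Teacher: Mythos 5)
Your proposal shares the paper's overall architecture (a Cantor-type set $\Sigma$ of dimension $d-p_-$, a piecewise-constant exponent equal to $p_-$ on a thin "transport" region and $p_+$ elsewhere, a Frostman argument for the Hausdorff lower bound, and a scaling argument for the failure of Meyers's property), but the central mechanism is genuinely different and, as sketched, has a gap that would likely prevent the argument from closing.

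The essential difference: you try to exhibit an explicit function $u=\sum_n u_n$ and prove it \emph{is} a local minimizer by constructing an exact calibration, i.e.\ a divergence-free field $\sigma$ with $\sigma=|\nabla u|^{p(x)-2}\nabla u$ pointwise. The paper never does this and deliberately avoids it. Instead it fixes a \emph{competitor} $u=\tfrac12\sgn(x_d)\,\rho(x)$ (a single cutoff of the sign function around the cone over the Cantor set, not a telescoping sum), defines $v_\eta:=\argmin\mathcal F(\eta u+W_0^{1,\phi(\cdot)})$, and then shows that this abstractly-defined minimizer must be singular. The auxiliary scalar function $b$ is used not as a calibration but as a dual test object: one estimates $\int_\Omega|\nabla v_\eta|\,b\,dx \leq s^{-1}\bigl(\mathcal F(v_\eta)+\mathcal F^*(sb)\bigr)\leq s^{-1}\bigl(\mathcal F(\eta u)+\mathcal F^*(sb)\bigr)$, which is made small by the energy-competition Assumption; this bounds a restricted Riesz potential of $\nabla v_\eta$ and hence the integral, against the Cantor measure $\mu_{\lambda,\gamma}^{d-1}$, of the discrepancy between the one-sided traces of $v_\eta$ and the one-sided traces $\pm\eta/2$ it inherits from its boundary data. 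A pigeonhole then shows the trace jump is close to $\eta$ on a set of large $\mu$-measure, so $v_\eta$ has non-Lebesgue points on a set of full Cantor dimension. Lack of Meyers's property is derived by the same testing machinery (Theorem~\ref{T:absenceHI0}), not by scaling of an explicit gradient.

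The step in your plan that will not go through as stated is "verify $u$ is a local minimizer via $\operatorname{div}\sigma=0$." For a piecewise constant exponent, the $p_+$-phase is where $\nabla u$ vanishes, so $\sigma\equiv0$ there, and $\operatorname{div}\sigma=0$ forces the flux $|\nabla u|^{p_- -2}\nabla u$ to have vanishing normal component on the entire interface and to be exactly divergence-free in the $p_-$-phase. A telescoping, piecewise-affine $u$ will almost never satisfy this exactly; the nonlinear constitutive law does not commute with the sum $\sum_n\nabla u_n$, and the interface matching you flag as "delicate" is in fact the whole difficulty. Moreover, the divergence-free object the paper is implicitly using (see the comparison section, where $\mathbf b$ is shown to be solenoidal) is independent of the minimizer and of the constitutive law; it is not a calibration of $u$. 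Conceptually, the paper proves $v_\eta\neq h_H$ without ever identifying $v_\eta$, whereas your plan requires solving the Euler--Lagrange equation explicitly. If you keep the competitor $u$ and the auxiliary $b$ but drop the demand that $u$ itself be a minimizer, and instead argue on $v_\eta$ via trace estimates against $\mu_{\lambda,\gamma}^{d-1}$ and the duality bound $\int|\nabla v_\eta|b\lesssim\mathcal F(\eta u)+\mathcal F^*(sb)$, you recover exactly the structure of the paper's proof and the gap disappears.
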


Theorem~\ref{Theorem1} follows from Theorems~\ref{T:varexp_sub}, \ref{T:varexp_super} in the sub- and super dimensional cases, respectively.

\begin{theorem}[continuous variable exponent]\label{Theorem4}
Let $p_0>1$ and $\varkappa>p_0/2$. There exists a variable exponent $p\in C(\overline\Omega)\cap C^\infty(\overline\Omega \setminus \frS)$, where $\frS$ is a closed set in $(-1,1)^{d-1}\times\{0\}$ of zero Lebesgue measure, with $p=p_0$ on $\frS$ and modulus of continuity
\begin{equation}\label{sigma_def1}
\sigma(t)=\kappa \frac{\log\log(e^3+t^{-1})}{\log(e+t^{-1})},
\end{equation}
such that for the functional~$\mathcal{F}(u)$ of the form \eqref{varexpdef} there exists a local minimizer~$u\in W^{1,p(\cdot)}(\Omega)$ such that $u\notin W^{1,p(\cdot)+\varepsilon}$ for any~$\varepsilon>0$.

If $p_0<d$, then there exists a closed set~$\Sigma\subset\Omega$ of non-Lebesgue points of $u$  with $ \dim_{\mathcal{H}}(\Sigma)=d-p_0$.
\end{theorem}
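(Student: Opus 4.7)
The plan is to adapt the fractal barrier construction from~\cite{BalDieSur20lavrentiev}, used already for the piecewise-constant case (Theorem~\ref{Theorem3}), to a \emph{continuous} exponent with the prescribed modulus~$\sigma$. As with the earlier models, I would first treat the sub-dimensional regime $p_0<d$ (an analogue of Theorem~\ref{T:varexp_sub}) and reduce the super-dimensional case to it by a product construction (as in Theorem~\ref{T:varexp_super}).

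\textbf{Geometry and exponent.} Fix a dyadic sequence $r_k=2^{-k}$ and build a Cantor-type set $\frS\subset (-1,1)^{d-1}\times\{0\}$ as the limit of generation-$k$ cubes $Q_k^{(i)}$ of side $r_k$, with cardinality~$N_k$ satisfying $N_k r_k^{d-p_0}\asymp 1$, so that $\dim_{\mathcal{H}}\frS=d-p_0$. Define $p$ on each $Q_k^{(i)}$ to equal $p_0$ on its upper half $\{x_d\geq 0\}$ and $p_0+c\sigma(r_k)$ on its lower half $\{x_d<0\}$, interpolated smoothly across generation boundaries and set to $p_0$ on $\frS$ and on the complement of all refinement cubes. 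The quasi-monotonicity of~$\sigma$ at dyadic scales yields the modulus of continuity~\eqref{sigma_def1} after choosing the constant $c$ small enough, and the only non-smooth points of~$p$ are the accumulation points comprising~$\frS$.

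\textbf{Minimizer and minimality.} The candidate is $u=\sum_{k,i}u_k^{(i)}$, where $u_k^{(i)}$ is a rescaled Zhikov one-saddle-point profile supported in $Q_k^{(i)}$ with $|\nabla u_k^{(i)}|\sim r_k^{-1}$ on a fixed fraction of $Q_k^{(i)}$ and boundary traces chosen compatibly with the neighbors. Local minimality is verified by exhibiting a dual vector field $\tau=\sum_{k,i}\tau_k^{(i)}$ satisfying $\tau\cdot\nabla u=\varphi(x,|\nabla u|)+\varphi^*(x,|\tau|)$ pointwise with bounded weak divergence, as in~\cite[Section~4]{BalDieSur20lavrentiev}. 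Failure of higher integrability then reduces to the divergence of $\sum_k N_k r_k^{d-p_0-\varepsilon}$ for every $\varepsilon>0$, while the Hausdorff dimension identity follows from the standard covering computation using $N_k r_k^{d-p_0}\asymp 1$.

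\textbf{Main obstacle.} The heart of the argument is the sharp balance at the threshold $\varkappa>p_0/2$. The perturbation $\delta_k:=c\sigma(r_k)$ must be just large enough that on the ``hard'' half of $Q_k^{(i)}$ the factor $r_k^{-\delta_k}\sim\exp(c\varkappa\log\log r_k^{-1})$ forces the Lavrentiev-type saddle obstruction, yet small enough that the dual field~$\tau_k^{(i)}$ still lies in $L^{\varphi^*}$. The factor~$1/2$ in $\varkappa>p_0/2$ arises because both the primal summability ($u\in W^{1,p(\cdot)}$) and the dual summability ($\tau\in L^{\varphi^*}$) carry matching $\log\log$ corrections of the same order, and both series must converge simultaneously; controlling the double-logarithmic correction sharply at each scale is the chief technical difficulty, and is what distinguishes this borderline regime from the $\log$-Hölder case (where Meyers-type regularity holds by~\cite{ZhiPas08}).
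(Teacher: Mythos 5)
Your proposal takes a genuinely different route from the paper, and two of the key steps as written would not produce the stated threshold.

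\textbf{Different architecture.} The paper's proof (Theorems~\ref{T:varexp2_sub} and \ref{T:varexp2_super}, via Proposition~\ref{prop:cve0}) uses a \emph{single} fractal jump function $u=\tfrac12\sgn(x_d)\rho(x)$ whose gradient concentrates in a cone over the Cantor set, together with a scalar dual tester $b$, and argues about the \emph{abstract} minimizer $v_\eta$ of $\mathcal{F}$ with boundary data $\eta u$: trace discontinuity is read off from a restricted Riesz potential duality (Lemmas~\ref{lem:restricted-riesz}--\ref{lem:riesz-sublinear}, Theorem~\ref{T:diff_trace}), and the maximal dimension comes from integrating the traces against the generalized Cantor measure. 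Your proposal instead sums rescaled Zhikov profiles $u_k^{(i)}$ over generations and seeks a \emph{calibration} $\tau$ with pointwise Fenchel equality to certify that the explicit sum is the minimizer. This is much closer to the iterative construction of Fonseca--Mal\'y--Mingione, which is precisely what the paper is improving upon: the iterative construction only yielded $\dim_{\mathcal H}\Sigma\geq d-p-\varepsilon$, and you do not explain how the calibration route upgrades this to equality $\dim_{\mathcal H}\Sigma=d-p_0$. The paper's single-fractal-jump approach is what removes the $\varepsilon$.

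\textbf{Asymmetric exponent gives the wrong threshold, and contradicts your own heuristic.} You set $p\equiv p_0$ on the gradient half of $Q_k^{(i)}$ and $p\equiv p_0+c\sigma(r_k)$ on the dual half, with ``$c$ small enough''. Your threshold explanation (``both the primal and the dual summability carry matching $\log\log$ corrections of the same order'') is accurate --- but it describes a \emph{symmetric} perturbation $p^\pm=p_0\pm\sigma$, which is exactly what the paper uses. With your asymmetric choice the primal side gets no logarithmic gain from the exponent at all: $|\nabla u|^{p_0}\sim r_k^{-p_0}$, and with $N_k r_k^{d-p_0}\asymp1$ the primal energy is $\sum_k N_k r_k^{d-p_0}\asymp\sum_k 1=\infty$ unless you \emph{also} introduce the logarithmic refinement $l_j=\lambda^j j^\gamma$ of the generalized Cantor set $\frC_{\lambda,\gamma}$, which your proposal omits entirely. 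Even with that refinement, balancing a $-1$ (primal) against $p_0-1-c\varkappa$ (dual) gives the nonempty window $\varkappa>p_0/c$, which \emph{degrades} as $c\to 0$; the paper's symmetric $p_0\pm\sigma$ balances $\varkappa-1$ against $p_0-1-\varkappa$ to get $\varkappa>p_0/2$ with a modulus genuinely comparable to $\sigma$. So either the exponent must be made symmetric (matching your heuristic and the paper), or the constant $c$ must be taken $\gtrsim 2$ (contradicting ``small enough'' and distorting the modulus by a factor).

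\textbf{Calibration is an extra burden, not a shortcut.} Producing a divergence-free $\tau$ with the exact Fenchel identity $\tau\cdot\nabla u=\varphi(x,|\nabla u|)+\varphi^*(x,|\tau|)$ on a construction with $\log\log$-scale corrections at every generation is a strictly harder task than what the paper does: the paper only needs the much weaker Assumption~\ref{ass:harmonic2} ($\mathcal F(u),\mathcal F^*(b)<\infty$ plus the scaling inequality~\eqref{MC2}), which is a pair of integrability estimates, not a pointwise identity, and it never claims the competitor is the minimizer. Referencing ``as in \cite[Section~4]{BalDieSur20lavrentiev}'' is not enough here --- that section establishes a separating functional for the Lavrentiev gap, not a calibration of an explicit minimizer.

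In short: the intuitive picture (fractal set, Zhikov-type profiles, balance of primal and dual log-corrections) is right, but you would need to (i) introduce the $\gamma$-refined Cantor set $\frC_{\lambda,\gamma}$, (ii) make the exponent perturbation symmetric around $p_0$, and (iii) replace the calibration by the paper's duality/Riesz-potential argument about the abstract $W$-minimizer, since the calibration step is both unnecessary and substantially harder.
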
 
  
Theorem~\ref{Theorem1} follows from Theorems~\ref{T:varexp2_sub}, \ref{T:varexp2_super} in the sub- and super dimensional cases, respectively.
 
In all of these theorems for $p=d$ ( in Theorems~\ref{Theorem1}, \ref{Theorem2}), $p_{-}=d$ (in Theorem~\ref{Theorem3}), $p_0=d$ (in Theorem~\ref{Theorem4}) the singular set of the minimizer will be a nonempty closed set of cardinality of the continuum of zero Hausdorff dimension (for \eqref{dbl2phasedef} under the additional assumption $\beta>1$).

\subsection{Outline} In Section~\ref{sec:usef-defin-lemm} we  introduce the models, related energy spaces and some auxiliary results. The general conditional results for the irregularity of the minimizers in the  subcritical case~$p<d$ are presented in Section~\ref{sec:sub}. We apply those results to the particular models in  Section~\ref{sec:sub_mod}. The general conditional results for the irregularity of the minimizers in the  supercritical  case~$p>d$ are presented in Section~\ref{sec:super}. We apply those results to the particular models in Section~\ref{sec:super_mod}. 

\section{Definitions and auxiliary results}\label{sec:usef-defin-lemm}

In this section we describe the functional setup for our variational problems and recall the construction of generalized Cantor sets. 

\subsection{Orlicz functions and functional spaces}

We say that~$\phi\,:\, [0,\infty) \to [0,\infty]$ is an Orlicz
function if $\phi$ is convex, left-continuous, $\phi(0)=0$,
$\lim\limits_{t \to 0} \phi(t)=0$ and
$\lim\limits_{t \to \infty} \phi(t)=\infty$.  The conjugate Orlicz
function~$\phi^*$ is defined by
\begin{align*}
  \phi^*(s) &:= \sup_{t \geq 0} \big( st - \phi(t)\big).
\end{align*}
In particular, $st \leq \phi(t) + \phi^*(s)$.

In the following we assume that
$\phi\,:\, \Omega \times [0,\infty) \to [0,\infty]$ is a generalized
Orlicz function, i.e. $\phi(x, \cdot)$ is an Orlicz function for
every~$x \in \Omega$ and $\phi(\cdot,t)$ is measurable for
every~$t\geq 0$. We define the conjugate function~$\phi^*$ pointwise,
i.e.  $\phi^*(x,\cdot) := (\phi(x,\cdot))^*$.

We further assume the following additional properties:
\begin{itemize}
\item We assume that~$\phi$ satisfies the $\Delta_2$-condition,
  i.e. there exists~$c \geq 2$ such that for
  all~$x \in \Omega$ and all~$t\geq 0$
  \begin{align}
    \label{eq:phi-Delta2}
    \phi(x,2t) &\leq c\, \phi(x,t). 
  \end{align}
\item We assume that~$\phi$ satisfies the~$\nabla_2$-condition,
  i.e. $\phi^*$ satisfies the~$\Delta_2$-condition. As a consequence,
  there exist~$s>1$ and~$c> 0$ such that for all $x\in \Omega$,
  $t\geq 0$ and $\gamma \in [0,1]$ there holds
  \begin{align}
    \label{eq:phi-Nabla2}
    \phi(x,\gamma t) \leq c\,\gamma^s \,\phi(x,t).
  \end{align}
\item We assume that $\phi$ and~$\phi^*$ are proper, i.e. for
  every~$t\geq 0$ there holds $\int\limits_\Omega \phi(x,t)\,dx< \infty$ and
  $\int\limits_\Omega \phi^*(x,t)\,dx < \infty$.
\end{itemize}
Let $L^0(\Omega)$ denote the set of measurable function on~$\Omega$
and $L^1_{\loc}(\Omega)$ denote the space of locally integrable 
functions. 
We define the generalized Orlicz norm by
\begin{align*}
  \norm{f}_{{\phi(\cdot)}} &:= \inf \biggset{\gamma > 0\,:\, \int\limits_\Omega
                     \phi(x,\abs{f(x)/\gamma})\,dx \leq 1}.
\end{align*}

Then generalized Orlicz space~$L^{{{{\phi(\cdot)}}}}(\Omega)$ is defined as the
set of all measurable functions with finite generalized Orlicz norm
\begin{align*}
  L^{{{{\phi(\cdot)}}}}(\Omega) &:= \bigset{f \in L^0(\Omega)\,:\, \norm{f}_{{{\phi(\cdot)}}}<\infty
                      }.
\end{align*}

For example the generalized Orlicz function $\phi(x,t) = t^p$
generates the usual Lebesgue space~$L^p(\Omega)$.

The $\Delta_2$-condition of~$\phi$ and~$\phi^*$ ensures that our space
is uniformly convex. The condition that~$\phi$ and $\phi^*$ are proper
ensure that $L^{{{{\phi(\cdot)}}}}(\Omega) \embedding L^1(\Omega)$ and
$L^{\phidx}(\Omega) \embedding L^1(\Omega)$. Thus $L^{{{{\phi(\cdot)}}}}(\Omega)$
and $L^{\phidx}(\Omega)$ are Banach spaces.

We define the generalized Orlicz-Sobolev space~$W^{1,{{{\phi(\cdot)}}}}$ as
\begin{align*}
  W^{1,\phi(\cdot)}(\Omega) &:= \set{w \in W^{1,1}(\Omega)\,:\,
                        \nabla w \in L^{\phi(\cdot)}(\Omega)},
\end{align*}
with the norm
\begin{align*}
  \norm{w}_{1, \phi(\cdot)}:=\norm{w}_{1}+\norm{\nabla w}_{ \phi(\cdot)}.
\end{align*}

In general smooth functions are not dense
in~$W^{1,\phi(\cdot)}(\Omega)$. Therefore, we define~$H^{1,\phi(\cdot)}(\Omega)$
as
\begin{align*}
  H^{1,\phi(\cdot)}(\Omega) &:= \big(\text{closure of}\  C^\infty(\Omega) \cap W^{1,\phi(\cdot)}(\Omega)\  \text{in}\  W^{1,\phi(\cdot)}(\Omega)\big).
\end{align*}
See \cite{DieHHR11} and~\cite{HarHas19} for further properties of
these spaces.

We also introduce the corresponding spaces with zero boundary values
as
\begin{align*}
  W^{1,{{{\phi(\cdot)}}}}_0(\Omega) &:= \set{w \in W^{1,1}_0(\Omega)\,:\,
                          \nabla w \in L^{\phi(\cdot)}(\Omega)}
\end{align*}
with same norm as in~$W^{1,\phi(\cdot)}(\Omega)$. And the corresponding space of smooth functions is defined as  
\begin{align*}
  H_0^{1,\phi(\cdot)}(\Omega) &:= \big(\text{closure of}\  C_0^\infty(\Omega) \cap W^{1,\phi(\cdot)}(\Omega)\  \text{in}\  W^{1,\phi(\cdot)}(\Omega)\big). 
\end{align*}
The space~$W_0^{1,\phi(\cdot)}(\Omega)$ are exactly those function, which can
be extended by zero to~$W^{1,\phi(\cdot)}(\setR^d)$ functions.

\subsection{Variational problems and the Lavrentiev phenomenon}

Let us define the energy~$\mathcal{F}\,:\, W^{1,{{{\phi(\cdot)}}}}(\Omega) \to \setR$ by
\begin{align*}
  \mathcal{F}(w) &:= \int\limits_\Omega \phi(x,\abs{\nabla w(x)})\,dx.
\end{align*}
In the language of function spaces~$\mathcal{F}$ is a semi-modular
on~$W^{1,{{{\phi(\cdot)}}}}(\Omega)$ and a modular on~$W^{1,{{{\phi(\cdot)}}}}_0(\Omega)$.
Let us start by introducing spaces with boundary values: for
$g \in H^{1,{{{\phi(\cdot)}}}}(\Omega)$ we define
\begin{align*}
  H_g^{1,{{{\phi(\cdot)}}}}(\Omega) &:= g + H_0^{1,{{{\phi(\cdot)}}}}(\Omega).
\end{align*}
Let 
\begin{align*}
  h_H(g) &= \argmin \mathcal{F}\big(H^{1,{{{\phi(\cdot)}}}}_g(\Omega)\big).
\end{align*}
Then
\begin{align*}
  -\Delta_{\phi(\cdot)} h_H(g) :=-\divergence \bigg( \frac{\phi'(x,\abs{\nabla h_H(g)})}{\abs{\nabla
  h_H(g)}} \nabla h_H(g) \bigg) &= 0 \qquad \text{ in  } (H^{1,\phi(\cdot)}_0(\Omega))^*.
\end{align*}

For $g \in W^{1,\phi(\cdot)}(\Omega)$ we define
\begin{align*}
  W_g^{1,\phi(\cdot)}(\Omega) &:= g + W_0^{1,\phi(\cdot)}(\Omega).
\end{align*}
Let
$$
  h_W(g) = \argmin \mathcal{F}\big(W^{1,\phi(\cdot)}_{g}(\Omega)\big).
$$
Formally, it satisfies the Euler-Lagrange equation (in the weak sense)
\begin{align*}
  -\Delta_{{{\phi(\cdot)}}} h_W(g) := -\divergence \bigg( \frac{\phi'(x,\abs{\nabla h_W(g)})}{\abs{\nabla
  h_W(g)}} \nabla h_W(g) \bigg) &= 0 \qquad \text{ in  }  (W^{1,\phi(\cdot)}_0(\Omega))^*,
\end{align*}
where $\phi'(x,t)$ is the derivative with respect to~$t$.

If $\phi(x,t)=\frac 12 t^2$, then $\Delta_{\phi(\cdot)}$ is just the standard Laplacian.
If $\phi(x,t)=\frac 1p t^p$, then $\Delta_{\phi(\cdot)}$ is the $p$-Laplacian.

The Lavrentiev phenomenon is the fact that for $g\in H^{1,\phi(\cdot)}(\Omega)$
$$
\min \mathcal{F}\big(W^{1,\phi(\cdot)}_{g}(\Omega)\big)< \min \mathcal{F}\big(H^{1,\phi(\cdot)}_{g}(\Omega)\big),
$$
and in particular $h_H(g)\neq h_W(g)$.

In our previous work \cite{BalDieSur20lavrentiev} we investigated in detail the Lavrentiev phenomenon for several models (variable exponent, double phase model, weighted $p$-energy). In this work we show that under similar assumptions the irregularity of the $W$-minimizer.

\subsection{Generalized Cantor sets and their properties.}\label{sec:Cantor}\hskip 0pt


Let $\{l_j,\ j=0,1,2,\ldots\}$ be a decreasing sequence of positive numbers starting from $l_0=1$:
$$
1=l_0>l_1>l_2>\ldots
$$
such that $l_{j-1} >2l_j$ for all $j\in \mathbb{N}$. We start from $I_{0,1}=[-1/2,1/2]$. On each $m$-th step we remove the open middle part of length $l_j-2l_{j+1}$ from the interval $I_{m,j}$, $j=1,\ldots, 2^m$ to obtain the next generation set of closed intervals $I_{m+1,j}$, $j=1,\ldots, 2^{m+1}$. The union of the closed intervals  $I_{m,j}  = [a_{m,j}, b_{m,j}]$, $j=1,\ldots, 2^m$ of length $l_m$  from the same generation forms the pre-Cantor set $C_m = \bigcup_{j=1}^{2^m} I_{m,j}$. The Cantor set $\frC = \bigcap_{m=0}^\infty C_m$ is the intersection of all pre-Cantor sets $C_m$.  

On each $m$-th step  we define the pre-Cantor measure as $\mu_m = |C_m|^{-1}\mathbbm{1}_{C_m}$, where $|C_m|= 2^m l_m$ is the standard Lebesgue measure of $C_m$, and the weak limit of the measures $\mu_m$ is the Cantor measure $\mu_{\frC}$ corresponding to $\frC$.

We require further that
$$
l_{m-1} - 2l_m > l_m - 2 l_{m+1} \Leftrightarrow l_{m+1} > \frac{3l_m - l_{m-1}}{2}, 
$$
at least for all sufficiently large $m$.

If the sequence $l_j$ satisfies the conditions above only for sufficiently large $j\geq j_0$,  then we modify it by taking the sequence $\tilde l_j = l_{j+j_0} (l_{j_0})^{-1}$, $j=0,1,2,\ldots$

For $k\in \mathbb{N}$ by $\frC^k$ and $\mu^{k}$ we denote the Cartesian powers of $k$ copies of $\frC$ and its corresponding Cantor measure, respectively.


\begin{definition}[Generalized Cantor sets]

\begin{itemize} \hphantom{Let it be}

\item ({\it Generalized $(\lambda,\gamma)$-Cantor sets}). Let $l_j = \lambda^j j^\gamma$, $\lambda\in (0,1/2)$, $\gamma\in\mathbb{R}$. We denote the corresponding Cantor set by $\frC_{\lambda,\gamma}$, the Cartesian product of its $k$ copies is $\frC^k_{\lambda,\gamma}$. For $\frC^k_{\lambda,\gamma}$ we denote $\frD = -k\log 2 / \log \lambda$, so that $\lambda^{\frD} = 2^{-k}$. We denote the Cantor measure corresponding to $\frC_{\lambda,\gamma}$ by $\mu_{\lambda,\gamma}$ and its $k$-th Cartesian power by $\mu^k_{\lambda,\gamma}$.

\item ({\it Meager Cantor sets}).  Let $l_j = \exp(-2^{j/\gamma})$, $\gamma>0$. Denote  the corresponding Cantor set by $\frC_{0,\gamma}$, and its Cartesian products by $\frC^k_{0,\gamma}$. For these sets we denote $\frD=0$. We denote the corresponding Cantor measures by $\mu^k_{0,\gamma}$.
\end{itemize}
\end{definition}

The standard $1/3$-Cantor set corresponds to the choice $l_j=(1/3)^j$, in our notation it is $\frC_{1/3,0}$, its fractal dimension is $\frD =\log_3 2$ and the corresponding measure is $\mu_{1/3,0}$. 


Denote $\frC_t = \{\mathrm{dist}(\bar x, \frC)<t\}$, where $\frC$ is one of the Cantor sets $\frC_{\lambda,\gamma}^k$ or~$\frC^k_{0,\gamma}$ defined above. Denote  by  $d_\infty (\bar x,\frC)$ the distance from $\bar x$ to $\frC$ in the maximum norm and let $\frC_{*,t} = \{d_\infty(\bar x, \frC)<t\}$. It is clear that $\frC_t \subset \frC_{*,t}$. In the following lemma $B^{m}_t(\bar x)$ is the open ball in  $\mathbb{R}^m$ with center at $\bar x$ and radius $t$. 

\begin{lemma}
  \label{lem:cantor-estimates}
  Let~$\lambda \in [0,\frac 12)$, $\gamma\in\mathbb{R}$, $1\leq m \leq d$. Let 
  $\frD = -m \log(2)/\log(\lambda)$ if $\lambda>0$. We   use the notation $x=(\bar{x},\hat{x}) \in \setR^m \times \setR^{d-m}$.
  Then we have the following properties: 
  \begin{itemize}
    \setlength{\itemsep}{1ex}%
  \item  For  every
    ball~$B^m_r(\bar{x})$, $r<1$, there
    holds
    \begin{align}\label{itm:cantor-estimates1}
     \begin{split}
    \mu^m_{\lambda,\gamma} (B^m_r(\bar{x})) \lesssim
    \indicator_{\set{d(\bar{x},\frC^m_{\lambda,\gamma})\le r}} r^{\frD} \log^{-\gamma \frD}(e+ r^{-1})\quad  \text{if}\quad \lambda>0,\\
    \mu^m_{0,\gamma} (B^m_r(\bar{x})) \lesssim
    \indicator_{\set{d(\bar{x},\frC^m_{0,\gamma})\le r}} \log^{-\gamma m}(e+ r^{-1}).
    \end{split}
\end{align}
   \item  For all $r\in(0,1)$ there holds
     \begin{align}\label{itm:cantor-estimates2}
     \begin{split}
    \mathcal{L}^m\big( \set{ \bar{x}: d(\bar{x},\frC^m_{\lambda,\gamma}) \leq  r } \big) \lesssim r^{m-\frD} \log^{\gamma \frD}(e+ r^{-1})\quad  \text{if}\quad \lambda>0,\\
     \mathcal{L}^m\big( \set{ \bar{x}: d(\bar{x},\frC^m_{0,\gamma}) \leq  r } \big) \lesssim r^{m} \log^{\gamma m}(e+ r^{-1}).
        \end{split}
     \end{align}
 \item 
 For all $\tau \in (0,4]$, $s \geq 0$, there holds 
     \begin{align}\label{itm:cantor-estimates4}
     \begin{split}
       &\bigabs{\big((\mu_{\lambda,\gamma}^m \times \delta_0^{d-m}) * (
       \indicator_{\set{\abs{\bar{y}}\leq \tau\abs{\hat{y}}}} \abs{y}^{-s})\big)(x)} \\
       &\lesssim
       \indicator_{ \set{ 
       d(\bar{x},\frC^m_{\lambda,\gamma}) \leq\tau \abs{\hat{x}}}}
       \,\abs{\hat{x}}^{\frD-s} \log^{-\gamma\frD}(e+ |\hat x|^{-1}),\quad \text{if}\quad \lambda>0,\\
       &\bigabs{\big((\mu_{0,\gamma}^m \times \delta_0^{d-m}) * (
      \indicator_{\set{\abs{\bar{y}}\leq \tau\abs{\hat{y}}}} \abs{y}^{-s})\big)(x)} 
      \lesssim\\
      &\indicator_{ \set{ 
      d(\bar{x},\frC^m_{0,\gamma}) \leq\tau \abs{\hat{x}}}}
      \,\abs{\hat{x}}^{-s} \log^{-\gamma m} (e+|\hat x|^{-1}).
    \end{split}
    \end{align}
   \end{itemize}
\end{lemma}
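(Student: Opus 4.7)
The plan is to attack all three parts in a unified way by picking a natural scale from the defining sequence $\{l_j\}$. Given $r\in(0,1)$, I would let $j=j(r)\in\mathbb{N}_0$ denote the unique integer with $l_{j+1}\le r<l_j$. At this level $C_j^m$ is a disjoint union of $2^{mj}$ closed cubes of sidelength $l_j$, and $\mu^m_{\lambda,\gamma}$ distributes its total mass equally among them, so each cube carries mass $2^{-mj}$. By construction consecutive level-$j$ cubes are separated by at least $l_{j-1}-2l_j$, which in both regimes is $\gtrsim l_j\approx r$; hence a ball of radius $r$ can meet only $O(1)$ of them, and all three bounds reduce to bookkeeping on top of this observation.

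For (i), if $B_r^m(\bar x)\cap\frC^m_{\lambda,\gamma}=\emptyset$ both sides vanish; otherwise the above yields $\mu^m_{\lambda,\gamma}(B_r^m(\bar x))\lesssim 2^{-mj}$, and it remains to translate $2^{-mj}$ to $r$. For $\lambda>0$ the identity $l_j=\lambda^j j^\gamma\approx r$ gives $j\approx\log(r^{-1})/\log(1/\lambda)$ and hence
\[
2^{-mj}=(\lambda^j)^{\frD}\approx\bigl(r\,j^{-\gamma}\bigr)^{\frD}\approx r^{\frD}\,\log^{-\gamma\frD}(e+r^{-1}),
\]
using $\frD=m\log 2/\log(1/\lambda)$. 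For $\lambda=0$ the relation $l_j=\exp(-2^{j/\gamma})\approx r$ gives $j\approx\gamma\log_2\log(r^{-1})$ and $2^{-mj}\approx\log^{-\gamma m}(e+r^{-1})$. For (ii), the $r$-neighborhood of $\frC^m_{\lambda,\gamma}$ sits inside the union of at most $2^{mj}$ cubes of sidelength $\lesssim l_j+2r\lesssim r$, so its Lebesgue measure is $\lesssim 2^{mj}r^m$; inverting the asymptotics of (i) yields $r^{m-\frD}\log^{\gamma\frD}(e+r^{-1})$ or $r^m\log^{\gamma m}(e+r^{-1})$, as required.

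For (iii) I would unfold the convolution against the product measure $\mu^m_{\lambda,\gamma}\times\delta_0^{d-m}$ as
\[
F(x):=\int_{\mathbb{R}^m}\mathbf 1_{\{|\bar x-\bar y|\le\tau|\hat x|\}}\,\bigl|(\bar x-\bar y,\hat x)\bigr|^{-s}\,d\mu^m_{\lambda,\gamma}(\bar y).
\]
On the support of the indicator $|\hat x|\le|(\bar x-\bar y,\hat x)|\le\sqrt{1+\tau^2}\,|\hat x|$, so the kernel is comparable to $|\hat x|^{-s}$ uniformly in $\tau\in(0,4]$; this collapses the estimate to $F(x)\lesssim|\hat x|^{-s}\,\mu^m_{\lambda,\gamma}(B_{\tau|\hat x|}^m(\bar x))$, after which (i) applied with $r=\tau|\hat x|$ (the boundedness $\tau\le 4$ keeping the log factor comparable to $\log(e+|\hat x|^{-1})$) delivers both stated bounds. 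The hard part throughout is the logarithmic bookkeeping: for $\lambda>0$ one must exploit the slow variation of $j\mapsto j^\gamma$ to substitute $j$ by $\log(r^{-1})/\log(1/\lambda)$ inside the exponent without disturbing the leading log-power, whereas for $\lambda=0$ the scale equation is doubly logarithmic, so $\frD=0$ and the $\gamma$-dependence appears with a different coefficient; the two regimes must therefore be handled in parallel but separately, while the rest of the argument is essentially a careful counting of covers.
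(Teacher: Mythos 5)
Your proposal is correct and follows essentially the same route as the paper: select the scale $j$ from the defining sequence $\{l_j\}$ so that $r\approx l_j$, use the gap size to conclude that a ball of radius $r$ meets $O(1)$ level-$j$ cubes (giving $\mu^m(B_r)\lesssim 2^{-mj}$), cover the $r$-neighborhood by $2^{mj}$ inflated cubes for the Lebesgue bound, and reduce the convolution estimate to the ball bound by factoring out $|\hat x|^{-s}$. The only difference from the paper is cosmetic: the paper parametrizes the scale by the interval $t\in(l_j/2-l_{j+1},\,l_{j-1}/2-l_j)$ rather than by $l_{j+1}\le r<l_j$, and proves the Lebesgue bound before the measure bound, but the bookkeeping $2^{mj}\approx l_j^{-\frD}\log^{\gamma\frD}(1/l_j)$ (resp.\ $\log^{\gamma m}(1/l_j)$ for $\lambda=0$) is identical.
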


\begin{proof}

Let us prove ~\eqref{itm:cantor-estimates1}. Let $l_j$ be the sequence of interval lengths defining the Cantor set $\frC_{\lambda,\gamma}$. Let $t \in (l_{j}/2 - l_{j+1}, l_{j-1}/2 - l_j)$. The set $\frC_{*,t}$ consists of $2^{mj}$  cubes with edge $l_j+2t$.
So
$$
|\frC_{*,t}|_m\leq 2^{m(j-1)} (l_j + 2t)^m.
$$

First consider the case $\frC=\frC^m_{\lambda,\gamma}$ with $\lambda>0$. Then $l_{j}+2t \leq l_{j-1}-l_j \leq c t$ with some constant $c$ independent of $j$, so 
$$
|\frC_t|_m \leq |\frC_{*,t}|_m \lesssim 2^{mj} t^m.
$$
Recalling that $\frD = -m\log 2 / \log \lambda$ and $\lambda^{\frD} = 2^{-m}$, we get 
$$
2^{mj} = \lambda^{-j\frD}  = l_j^{-\frD} j^{\gamma \frD} \eqsim l_j^{-\frD} \log^{\gamma \frD} (1/l_j) \eqsim t^{-\frD} \log^{\gamma \frD} (1/t).
$$

Now consider the case $\frC=\frC^m_{0,\gamma}$ (ultrathin Cantor sets). Then we get
$$
2^{mj}  = \log^{\gamma m} (1/l_j) \approx \log^{\gamma m} (1/t) .
$$

Let us prove ~\eqref{itm:cantor-estimates2}. We estimate $\mu^m_{\lambda,\gamma}(B^{\bar x}_r)$. Any interval of length $2t$ with $t\in (l_{j}/2 - l_{j+1}, l_{j-1}/2 - l_j)$ can intersect at most one interval forming the $j$-th iteration of the pre-Cantor set. Since $B^{\bar x}_t$  lies within a cube with edge $2t$, then $\mu_{\lambda,\gamma}^m(B^{\bar x}_t) \leq 2^{-jm}$. Using the above estimates for $2^{jm}$ we arrive at the required estimate.

 Let us prove~\eqref{itm:cantor-estimates4}. Note that
  \begin{align*}
    \lefteqn{\big((\mu_\lambda^m \times \delta_0^{d-m}) * (
    \indicator_{\set{\abs{\bar{y}}\leq \tau\abs{\hat{y}}}} \abs{y}^{-s}) \big)(x)}
    \qquad
    &
    \\
    &= \int\limits_{\setR^m}
      \indicator_{\set{\abs{\bar{x}-\bar{y}}\leq
      \tau
      \abs{\hat{x}}}} \abs{(\bar{x},\hat{x})-(\bar{y},0)}^{-s}\,d\mu_\lambda^m(\bar{y})
    \\
    &\leq \abs{\hat{x}}^{-s}  \int\limits_{\setR^m}
      \indicator_{\set{\abs{\bar{x}-\bar{y}}\leq
      \tau
      \abs{\hat{x}}}} \,d\mu_\lambda^m(\bar{y})
    \\
    &= \abs{\hat{x}}^{-s}
      \mu^m_{\lambda,\gamma} (B^m_{\tau \abs{\hat{x}}}(\bar{x})).
  \end{align*}
  Now, the claim follows by an application
  of the previous estimates. 
\end{proof}

Let $\Omega = (-1,1)^d$.

\begin{lemma}\label{L:integr_est}
For $\beta > 0$ and $t>3$ there holds 
$$
\mathcal{L}^d\{ |\hat x|^{-\beta} \log^\delta (e+|\hat x|^{-1}) \indicator_{\set{d(\bar x,\frC^m_{\lambda,\gamma}) \leq 4 |\hat x|}}>t \} \lesssim  t^{(\frD -d)/\beta} (\log t)^{\gamma \frD + (d-\frD)\delta/\beta}
$$
if $\lambda>0$ and 
$$
\mathcal{L}^d\{ |\hat x|^{-\beta}\log^\delta (e+|\hat x|^{-1}) \indicator_{\set{d(\bar x,\frC^m_{0,\gamma}) \leq 4 |\hat x|}}>t \} \lesssim  t^{-d/\beta} (\log t)^{\gamma m+d\delta/\beta}.
$$
In particular,
$$
|\hat x|^{-\beta} \log^\delta (e+|\hat x|^{-1})  \indicator_{\set{d(\bar x,\frC^m_{\lambda,\gamma}) \leq 4 |\hat x|}} \in L^1(\Omega)
$$
if $d>\frD + \beta$ or $d=\frD + \beta$ and additionally $\gamma\frD+ \delta  < -1$. Similarly, 
$$
|\hat x|^{-\beta} \log^\delta (e+|\hat x|^{-1}) \indicator_{\set{d(\bar x,\frC^m_{0,\gamma}) \leq 4 |\hat x|}} \in L^1(\Omega)
$$
if $d>\beta$ or $d=\beta$ and $\gamma m + \delta<-1$.

\end{lemma}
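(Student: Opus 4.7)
My strategy is to reduce the claim to a distribution-function computation anchored on the slice estimate~\eqref{itm:cantor-estimates2} of Lemma~\ref{lem:cantor-estimates}. Writing $x=(\bar x,\hat x)\in\setR^m\times\setR^{d-m}$ and denoting the integrand by $F(x)$, I note that $F$ depends on $\bar x$ only through the indicator $\indicator_{\{d(\bar x,\frC^m_{\lambda,\gamma})\le 4|\hat x|\}}$. Fubini then gives
\[
 \mathcal{L}^d\{F>t\}=\int_{\{|\hat x|^{-\beta}\log^{\delta}(e+|\hat x|^{-1})>t\}}\mathcal{L}^m\bigl\{\bar x\colon d(\bar x,\frC^m_{\lambda,\gamma})\le 4|\hat x|\bigr\}\,d\hat x.
\]

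\textbf{Core steps.} I would apply~\eqref{itm:cantor-estimates2} with $r=4|\hat x|$ to bound the inner $m$-dimensional measure by $|\hat x|^{m-\frD}\log^{\gamma\frD}(e+|\hat x|^{-1})$. For the outer integral I use that for $t>3$ the superlevel condition $|\hat x|^{-\beta}\log^{\delta}(e+|\hat x|^{-1})>t$ forces $|\hat x|\le\rho_t$ with $\rho_t\lesssim t^{-1/\beta}(\log t)^{\delta/\beta}$, obtained by inverting a function that is eventually monotone decreasing near $0$. Passing to spherical coordinates in $\setR^{d-m}$ then yields
\begin{align*}
 \mathcal{L}^d\{F>t\}
 &\lesssim\int_0^{\rho_t}\rho^{d-\frD-1}\log^{\gamma\frD}(e+\rho^{-1})\,d\rho\\
 &\lesssim\rho_t^{d-\frD}\log^{\gamma\frD}(e+\rho_t^{-1})\\
 &\lesssim t^{(\frD-d)/\beta}(\log t)^{\gamma\frD+(d-\frD)\delta/\beta},
\end{align*}
which is exactly the first claimed bound. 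The meager case will be handled identically upon substituting $\frD=0$ and replacing $\log^{\gamma\frD}$ with $\log^{\gamma m}$ via the corresponding branch of~\eqref{itm:cantor-estimates2}.

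\textbf{Integrability assertion.} For the $L^1$ claim I will use the layer-cake identity
\[
 \int_{\Omega}|F|\,dx\lesssim 1+\int_3^{\infty}\mathcal{L}^d\{F>t\}\,dt,
\]
and test when the tail integrand $t^{(\frD-d)/\beta}(\log t)^{\gamma\frD+(d-\frD)\delta/\beta}$ is integrable at infinity. This happens exactly when $(d-\frD)/\beta>1$, i.e.\ $d>\frD+\beta$, or in the borderline case $d=\frD+\beta$ when the log-exponent is below $-1$, i.e.\ $\gamma\frD+\delta<-1$, using that $(d-\frD)\delta/\beta=\delta$ on the borderline. The meager analogue follows from the same integral test with $\frD=0$.

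\textbf{Main obstacle.} The conceptual content is already packaged inside~\eqref{itm:cantor-estimates2}, so the proof is essentially a bookkeeping exercise. The only delicate point I anticipate is carefully tracking the logarithmic exponents through the inversion of $\rho\mapsto\rho^{-\beta}\log^{\delta}(e+\rho^{-1})$ and correctly combining the $\delta/\beta$ contribution from $\rho_t$ with the $\gamma\frD$ factor coming from the slice estimate; apart from this, I foresee no difficulty.
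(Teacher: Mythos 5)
Your proposal is correct and follows essentially the same route as the paper's proof: Fubini to split into the slice in $\bar x$ and the radial integral in $\hat x$, the Cantor-slice estimate~\eqref{itm:cantor-estimates2}, inversion of the level condition to get $|\hat x|\lesssim t^{-1/\beta}(\log t)^{\delta/\beta}$, and polar coordinates in $\setR^{d-m}$, with the paper merely leaving the layer-cake integrability test and the meager case $\frD=0$ implicit in the phrase ``the rest easily follows.'' Your write-up fills in those details explicitly but introduces no new idea.
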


\begin{proof}
Consider the case $\frD>0$. If $|\hat x|^{-\beta} \log^\delta (e+|\hat x|^{-1}) >t$ then $
|\hat x|<ct^{-1/\beta} (\log t)^{\delta/\beta}$. We have 
\begin{align*}
\int\limits_{d(\bar x,\frC^m_{\lambda,\gamma}) \leq 4 |\hat x|< 4t^{-1/\beta}}\, dx = \int\limits_{|\hat x|<ct^{-1/\beta}(\log t)^{\delta/\beta}} \mathcal{L}^m (\{d(\bar x,\frC^m_{\lambda,\gamma}) \leq 4 |\hat x|\})\, d\hat x \\
\lesssim \int\limits_0^{c t^{-1/\beta}(\log t)^{\delta/\beta}} r^{d-m-1} r^{m-\frD} (\log r^{-1})^{\gamma \frD}\, dr
\lesssim t^{(\frD -d)/\beta} (\log t)^{\gamma \frD + (d-\frD)\delta/\beta}.
\end{align*}
The rest easily follows. 
\end{proof}

\section{General examples framework}

In this section we shall present a unified framework which reduces constructing an example for a particular model to checking integrability properties for a couple of functions.

\subsection{Competitor functions}

Our arguments will be based on comparing the energy of the minimizer with special competitor functions.

In the construction of our fractal examples we need a
smooth approximation of the indicator function
$\indicator_{\set{ d(\bar{x},\frC^{m}_\lambda) \leq 3
    \abs{\hat{x}}}}$, where
$x= (\bar{x},\hat{x}) \in \setR^m \times \setR^{d-m}$. This is the
purpose of the following lemma.

  Let~$1 \leq m \leq d-1$ and~$\frS := \frC^m_{\lambda,\gamma} \times \setR^{d-m}$. We use the
  notation $x=(\bar{x},\hat{x}) \in \setR^m \times \setR^{d-m}$. 
\begin{lemma}
  \label{lem:smooth-indicator}
 Let
  $\frac 14 \leq \tau_1 < \tau_2 \leq 4$ and~$\tau_2-\tau_1 \geq \frac
  14$. Then there exists~$\rho \in C^\infty(\setR^d \setminus \frS)$
  such that
  \begin{itemize}
  \item \label{itm:rho1}
    $\indicator_{\set{d(\bar{x},\frC^m_{\lambda,\gamma}) \leq \tau_1 \abs{\hat{x}}}}
    \leq \rho \leq \indicator_{\set{ d(\bar{x},\frC^m_{\lambda,\gamma}) \leq
        \tau_2 \abs{\hat{x}}}}$.
  \item \label{itm:rho2}
    $\abs{\nabla \rho(\bar{x},\hat{x})} \lesssim \abs{\hat{x}}^{-1}
    \indicator_{\set{\tau_1 \abs{\hat{x}} \leq d(\bar{x},\frC^m_{\lambda,\gamma}) \leq
        \tau_2 \abs{\hat{x}}}}$.
  \end{itemize}
  In particular, $\rho=1$ on~$\set{d(\bar{x},\frC^m_{\lambda,\gamma}) \leq \tau_1 \abs{\hat{x}}}$ and $\rho=0$ on $\set{\tau_2 \abs{\hat{x}} \leq d(\bar{x},\frC^m_{\lambda,\gamma})}$.
\end{lemma}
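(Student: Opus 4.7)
The idea is to build $\rho$ as a smooth cutoff of the scale--invariant ratio $d(\bar{x},\frC^m_{\lambda,\gamma})/\abs{\hat{x}}$. Since the distance function $\bar{x}\mapsto d(\bar{x},\frC^m_{\lambda,\gamma})$ is only $1$-Lipschitz (and fails to be smooth exactly on $\frC^m_{\lambda,\gamma}$), the main technical point is to replace it by a smooth substitute that is well-behaved outside $\frS$ and whose mollification scale shrinks proportionally to $\abs{\hat{x}}$. Because $\tau_2-\tau_1\ge \tfrac14$ and $\tau_1,\tau_2\in[\tfrac14,4]$, all constants below are absolute.

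\textbf{Step 1: Smooth profile.} Fix a nondecreasing $\eta\in C^\infty(\setR;[0,1])$ with $\eta\equiv 0$ on $(-\infty,\tau_1+\tfrac{\tau_2-\tau_1}{4}]$ and $\eta\equiv 1$ on $[\tau_2-\tfrac{\tau_2-\tau_1}{4},\infty)$, so that $\abs{\eta'}\le C$ for an absolute $C$. I will set $\rho(x):=1-\eta\big(D(x)/\abs{\hat{x}}\big)$ for a smooth function $D$ constructed in the next step. This is defined and smooth on $\setR^d\setminus\frS$ (where $\abs{\hat{x}}>0$ or $\bar x\notin\frC^m_{\lambda,\gamma}$, and in the latter case $D(x)>0$ in a neighborhood so $\rho\equiv 0$ there).

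\textbf{Step 2: Variable-scale mollification of the distance.} Pick $\zeta\in C_c^\infty(B_1^m)$ nonnegative with $\int\zeta=1$, and a large absolute constant $N\ge 16/(\tau_2-\tau_1)$. Define
\begin{equation*}
D(x) := \int_{\setR^m} d\big(\bar{x}-\tfrac{\abs{\hat{x}}}{N}y,\,\frC^m_{\lambda,\gamma}\big)\,\zeta(y)\,dy.
\end{equation*}
Since $d(\cdot,\frC^m_{\lambda,\gamma})$ is $1$-Lipschitz, $\abs{D(x)-d(\bar{x},\frC^m_{\lambda,\gamma})}\le \abs{\hat{x}}/N$. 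Because the mollification scale $\abs{\hat{x}}/N$ is a smooth function of $x$ on $\setR^d\setminus\frS$ and $\zeta$ is smooth with compact support, a direct differentiation under the integral shows $D\in C^\infty(\setR^d\setminus\frS)$. Moreover a change of variables and the scaling of $\zeta$ yield an absolute bound $\abs{\nabla D(x)}\lesssim 1$: the $\bar{x}$-derivative contributes at most $1$ by Lipschitz continuity of $d$, while the $\hat{x}$-derivative produces a term $\tfrac{1}{N\abs{\hat{x}}}\int d(\bar{x}-\tfrac{\abs{\hat{x}}}{N}y,\frC)\,(y\cdot\nabla\zeta)(y)\,dy$, which is $\lesssim 1$ by integration by parts (or by rewriting as $\tfrac{1}{N}\int \nabla d\cdot y\,\zeta\,dy$ in the mollified sense) together with the Lipschitz bound and $D(x)\lesssim \abs{\hat{x}}$ on the relevant region.

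\textbf{Step 3: Verifying the three properties.} If $d(\bar{x},\frC^m_{\lambda,\gamma})\le \tau_1\abs{\hat{x}}$, then $D(x)\le (\tau_1+1/N)\abs{\hat{x}}\le (\tau_1+\tfrac{\tau_2-\tau_1}{4})\abs{\hat{x}}$, so $\eta=0$ and $\rho=1$. If $d(\bar{x},\frC^m_{\lambda,\gamma})> \tau_2\abs{\hat{x}}$, then $D(x)\ge (\tau_2-1/N)\abs{\hat{x}}\ge (\tau_2-\tfrac{\tau_2-\tau_1}{4})\abs{\hat{x}}$, so $\eta=1$ and $\rho=0$. This gives the inclusion in the first bullet.

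For the gradient bound, $\nabla\rho=-\eta'(D/\abs{\hat{x}})\,\nabla(D/\abs{\hat{x}})$ is supported where $D(x)/\abs{\hat{x}}\in[\tau_1+\tfrac{\tau_2-\tau_1}{4},\,\tau_2-\tfrac{\tau_2-\tau_1}{4}]$; by Step 3 this is contained in $\{\tau_1\abs{\hat{x}}\le d(\bar{x},\frC^m_{\lambda,\gamma})\le\tau_2\abs{\hat{x}}\}$. On this transition annulus $D(x)\eqsim\abs{\hat{x}}$, hence
\begin{equation*}
\abs{\nabla\rho(x)}\le \abs{\eta'}\bigg(\frac{\abs{\nabla D}}{\abs{\hat{x}}}+\frac{D(x)}{\abs{\hat{x}}^2}\bigg)\lesssim \frac{1}{\abs{\hat{x}}},
\end{equation*}
which is exactly the second bullet.

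\textbf{Main obstacle.} The only nontrivial point is the scale-invariant Lipschitz bound $\abs{\nabla D}\lesssim 1$ for the variable-scale mollification, because the mollification radius itself depends on $x$. Everything else is standard cutoff bookkeeping.
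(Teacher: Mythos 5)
The paper itself does not include a proof of Lemma~\ref{lem:smooth-indicator}; it is stated as a technical auxiliary result and the construction is of the type that appears in the authors' earlier paper \cite{BalDieSur20lavrentiev}. So there is no internal proof to compare against, and the task reduces to checking your argument on its own.

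Your proof is correct and uses the natural approach: build a smooth, scale-invariant surrogate $D$ for the distance function by mollifying at scale proportional to $\abs{\hat{x}}$, then compose with a profile $\eta$. The inclusion in the first bullet follows from the elementary mollification error bound $\abs{D(x)-d(\bar{x},\frC^m_{\lambda,\gamma})}\le\abs{\hat{x}}/N$ and the choice $N\ge 16/(\tau_2-\tau_1)$; the support condition and the $\abs{\hat{x}}^{-1}$ bound in the second bullet follow from $\abs{\nabla D}\lesssim 1$, the support of $\eta'$, and $D\eqsim\abs{\hat{x}}$ on the transition region. The one place where the write-up is slightly loose is the estimate of $\partial_{\hat{x}_j} D$. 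The cleanest way to get $\abs{\nabla D}\lesssim 1$ is to differentiate the defining formula $D(x)=\int d(\bar{x}-\tfrac{\abs{\hat{x}}}{N}y,\frC)\,\zeta(y)\,dy$ directly in the distributional sense (permissible since $d(\cdot,\frC)$ is Lipschitz, hence $W^{1,\infty}_{\loc}$), giving
\begin{equation*}
\partial_{\hat{x}_j} D(x) = -\frac{\hat{x}_j}{N\abs{\hat{x}}}\int (\nabla d)\Big(\bar{x}-\tfrac{\abs{\hat{x}}}{N}y,\frC\Big)\cdot y\,\zeta(y)\,dy,
\end{equation*}
so $\abs{\partial_{\hat{x}_j}D}\le 1/N$ immediately, with no need for the auxiliary bound ``$D\lesssim\abs{\hat{x}}$''. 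If one instead integrates by parts as you sketch, the term $\tfrac{1}{N\abs{\hat{x}}}\int d(\cdot)\,(y\cdot\nabla\zeta)\,dy$ is not itself $\lesssim 1$ before you pair it with the Jacobian term coming from the rescaled kernel; what one actually uses is $\int(y\cdot\nabla\zeta)\,dy=-m$ and the Lipschitz bound $\abs{d(\bar{x}-\tfrac{\abs{\hat{x}}}{N}y)-d(\bar{x})}\le\abs{\hat{x}}\abs{y}/N$, after which the dangerous terms cancel up to $O(1)$. It would be better to state the differentiation-under-the-integral argument as the primary route and drop the reference to $D\lesssim\abs{\hat{x}}$. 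Similarly, your remark that ``$D(x)>0$ in a neighborhood so $\rho\equiv 0$'' near $\{\hat{x}=0\}\setminus\frS$ should rather say that $D/\abs{\hat{x}}\to\infty$ there (since $D$ stays bounded below while $\abs{\hat{x}}\to 0$), so $\eta=1$ and $\rho\equiv 0$ in a neighborhood. These are presentational issues, not gaps; the construction and all three asserted properties hold.
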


 For~$d \geq 2$ and $x\in \setR^d$ we denote $x=(\bar x,x_d)$, $\bar x \in \mathbb{R}^{d-1}$. Define $u_d$  on~$\setR^d$ by
$$
u_d(x) := \frac{1}{2}\sgn(x_d)\, \theta\bigg( \frac{\abs{x_d}}{\abs{\bar{x}}} \bigg).
$$  
where  $\theta\in C^\infty((0,\infty))$ such that $\indicator_{(1/2,\infty)}\leq \theta \leq \indicator_{(1/4,\infty)}$, $\|\theta'\|_\infty \leq 6$.

\begin{proposition}
  \label{pro:prop-uAb-d}
  There holds $u_d \in L^\infty(\setR^d) \cap W^{1,1}_{\loc}(\setR^d) \cap C^\infty(\setR^d
    \setminus \set{0})$.
  Moreover, the following estimates hold
  \begin{align*}
    \begin{alignedat}{2}
      \abs{\nabla u_d} &\lesssim \abs{x_d}^{-1} \indicator_{ \set{ 2
          \abs{x_d} \leq \abs{\bar{x}} \leq 4 \abs{x_d}}} &&\eqsim
      \abs{\bar{x}}^{-1} \indicator_{ \set{ 2 \abs{x_d} \leq
          \abs{\bar{x}} \leq 4 \abs{x_d}}}
     \end{alignedat}
  \end{align*}
\end{proposition}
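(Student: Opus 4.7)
The plan is to verify the three regularity claims in turn: boundedness, smoothness away from the origin, $W^{1,1}_{\loc}$, and then the gradient estimate. Boundedness is immediate since $|\theta|\le 1$ gives $|u_d|\le \tfrac12$.

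For smoothness on $\setR^d\setminus\{0\}$ I would split into two cases. On $\{\bar x\ne 0\}$ the map $x\mapsto |x_d|/|\bar x|$ is smooth except possibly at $x_d=0$; however, $\theta$ vanishes on $(0,1/4]$, so on the open set $\{|x_d|<|\bar x|/4\}$ (which contains a full neighbourhood of any point with $\bar x\ne 0$, $x_d=0$) we have $u_d\equiv 0$. Elsewhere on $\{\bar x\ne 0\}$ we have $|x_d|>|\bar x|/4>0$, so $\sgn(x_d)$ is locally constant and the chain rule gives $u_d\in C^\infty$. On $\{\bar x=0,\ x_d\ne 0\}$ the ratio $|x_d|/|\bar x|$ exceeds $1/2$ in a punctured neighbourhood, where $\theta\equiv 1$, so $u_d$ equals the locally constant function $\tfrac12\sgn(x_d)$ and extends smoothly. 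This covers all of $\setR^d\setminus\{0\}$.

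For the gradient estimate I would differentiate on the smooth set. The factor $\theta'(|x_d|/|\bar x|)$ is supported in $\{1/4\le |x_d|/|\bar x|\le 1/2\}$, which is exactly $\{2|x_d|\le |\bar x|\le 4|x_d|\}$; on this set $\sgn(x_d)$ is constant in $x_d$, so
\[
 \nabla u_d \;=\; \tfrac12 \sgn(x_d)\,\theta'\!\left(\tfrac{|x_d|}{|\bar x|}\right) \nabla\!\left(\tfrac{|x_d|}{|\bar x|}\right).
\]
A direct computation gives $\bigl|\nabla(|x_d|/|\bar x|)\bigr|^2 = |\bar x|^{-2} + |x_d|^2|\bar x|^{-4}$, and in the support band $|\bar x|\eqsim |x_d|$, so this quantity is $\eqsim |x_d|^{-2}\eqsim |\bar x|^{-2}$. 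Using $\|\theta'\|_\infty\le 6$ yields the claimed bound
\[
 |\nabla u_d|\;\lesssim\; |x_d|^{-1}\indicator_{\{2|x_d|\le|\bar x|\le 4|x_d|\}}\;\eqsim\;|\bar x|^{-1}\indicator_{\{2|x_d|\le|\bar x|\le 4|x_d|\}}.
\]

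Finally, for $W^{1,1}_{\loc}$ I would integrate this pointwise bound. For any ball $B_R$, slicing in $x_d$ and noting that $\{|\bar x|\in[2|x_d|,4|x_d|]\}$ has $(d-1)$-measure $\lesssim |x_d|^{d-1}$, Fubini gives $\int_{B_R}|\nabla u_d|\,dx \lesssim \int_{-R}^{R}|x_d|^{d-2}\,dx_d <\infty$ for $d\ge 2$; combined with $u_d\in L^\infty$ this yields $u_d\in W^{1,1}_{\loc}(\setR^d)$. The only delicate step is the smoothness discussion, since one must simultaneously rule out trouble along $\{x_d=0\}$ (handled by $\theta\equiv 0$ near the origin of its argument) and along $\{\bar x=0\}$ (handled by $\theta\equiv 1$ at infinity); everything else is a chain rule computation together with the comparability $|\bar x|\eqsim |x_d|$ on the support of $\theta'$.
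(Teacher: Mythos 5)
The paper states Proposition~\ref{pro:prop-uAb-d} without proof, treating it as a routine verification, so there is no paper argument to compare against. Your proof is correct and does exactly what one would expect: boundedness from $0\le\theta\le1$; smoothness away from the origin by patching the three open regions where $u_d$ is respectively $0$ (for $|x_d|<|\bar x|/4$), a smooth composition (where $\bar x\neq 0$ and $x_d\neq 0$), and the locally constant function $\tfrac12\sgn(x_d)$ (near the punctured $x_d$-axis); the gradient bound from $\operatorname{supp}\theta'\subset[1/4,1/2]$ together with $|\bar x|\eqsim|x_d|$ on that band; and $L^1_{\loc}$ of $|\nabla u_d|$ by a Fubini count of the annulus measure. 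The one step worth making explicit is the passage from ``$u_d\in L^\infty\cap C^\infty(\setR^d\setminus\{0\})$ with pointwise gradient in $L^1_{\loc}$'' to ``$u_d\in W^{1,1}_{\loc}(\setR^d)$'': one must rule out a singular part of the distributional derivative concentrated at the origin. For $d\ge 2$ this is standard, e.g.\ via the ACL characterization (almost every coordinate line avoids the origin), or by inserting a cutoff $\psi_\varepsilon$ vanishing on $B_\varepsilon(0)$ with $|\nabla\psi_\varepsilon|\lesssim\varepsilon^{-1}$ and noting that the boundary term $\bigl|\int u_d\,\partial_i\psi_\varepsilon\,\varphi\,dx\bigr|\lesssim\varepsilon^{d-1}\to 0$. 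It is worth stating because this is precisely where the hypothesis $d\ge 2$ is used; with it, your argument is complete.
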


Let $\Omega := (-1,1)^d$ with~$d\geq 2$. 

\begin{definition}[Competitors and auxiliary functions]
  \label{def:fractal-examples}
  Let $1 < p_0 < \infty$. We
  define~$u$ and $b$ on~$\overline{\Omega}$ distinguishing three cases:
  \begin{itemize}
  \item (Matching the dimension; Zhikov) $p_0=d$:

    Let the competitor function $u:= u_d$, the auxiliary function $b=|x_d|^{1-d}\indicator_\set{\abs{\bar x}<|x_d|/2}$,   $\frS := \set{0}$, ~$\frD    := \dim \frS =0$. 
    
  \item (Sub-dimensional) $1<p_0<d$:

    Let $\frS := \frC^{d-1}_{\lambda,\gamma} \times \set{0}$ and
    $\frD= \dim(\frS)=\frac{(d-1)\log 2}{\log(1/\lambda)}$,
    where~$\lambda \in (0,\frac 12)$ is chosen such that
    $p_0 = d - \frD$.  Let $\rho \in C^\infty(\setR^d\setminus \frS)$
    be such that (using Lemma~\ref{lem:smooth-indicator})
    \begin{align*}
    \indicator_{\set{d(\bar{x},\frC^{d-1}_{\lambda,\gamma}) \leq 2 \abs{x_d}}}
      &\leq \rho \leq \indicator_{\set{d(\bar{x},\frC^{d-1}_{\lambda,\gamma})
          \leq 4 \abs{x_d}}},\\
      \abs{\nabla \rho}& \lesssim \abs{x_d}^{-1}
      \indicator_{\set{2 \abs{x_d} \leq d(\bar{x},\frC^{d-1}_{\lambda,\gamma}) \leq
          4 \abs{x_d}}}.
    \end{align*}
    We define the competitor $u:= \frac{1}{2}\sgn(x_d)\, \rho(x)$. Let $\nu=\frD$ if $\frD>0$ and $\nu = d-1$ if $\frD =0$. Introduce the auxiliary function
   \begin{equation}\label{bdef_sub}
  b := \indicator_{ \set{ 
          d(\bar{x},\frC^{d-1}_{\lambda,\gamma}) \leq \frac 12 \abs{x_d}}} \abs{x_d}^{\frD+1-d} \log^{-\gamma \nu} (e+\abs{x_d}^{-1}). 
\end{equation}
If in this construction $\frD=\lambda=0$, we denote this case by $p_0=d-0$ and include it in the subcritical framework.

  \item (Super-dimensional) $p_0 > d$:
    
    Let $\frS := \set{0}^{d-1} \times \frC_{\lambda,\gamma}$ and
    $\frD= \dim(\frS)=\frac{\log 2}{\log(1/\lambda)}$,
    where~$\lambda \in (0,\frac 12)$ is chosen such that
    $p_0 = \frac{d-\frD}{1-\frD}$. We define the competitor
    $$
    u:= (\delta_0^{d-1} \times \mu_{\lambda,\gamma}) * u_d \Leftrightarrow u(\bar x,x_d)= \int u_d(\bar x, x_d-y_d)d\mu_{\lambda,\gamma}(y_d).
    $$  
 Introduce the auxiliary function 
\begin{equation}\label{bdef_super}
b= \abs{\bar x}^{1-d} \indicator_\set{2\abs{\bar x} \leq d (x_d, \frC_{\lambda,\gamma})\leq 4 \abs{\bar x}}.
\end{equation}
If in this construction $\frD=\lambda=0$, we denote this case by $p_0=d+0$ and include it in the supercritical framework.

  \end{itemize}
\end{definition}


Using Lemma~\ref{lem:cantor-estimates} (estimate \eqref{itm:cantor-estimates4}) and the structure of $u$ we get the following result. 
\begin{proposition}
  \label{pro:est-uAb}
  For~$1<p_0<\infty$ let $u$ be as is
  Definition~\ref{def:fractal-examples}.  Then $u \in L^\infty(\Omega)  \cap C^\infty(\overline{\Omega}
  \setminus \frS)$ and the following holds.
  
  \begin{itemize}
\item  \noindent  If $p_0=d$, then
$$
    \abs{\nabla u} 
    \lesssim           \abs{\bar{x}}^{-1} \indicator_{ 
      \set{ 2 \abs{x_d} \leq \abs{\bar{x}} \leq 4
      \abs{x_d}}} \eqsim \abs{x_d}^{-1} \indicator_{ 
      \set{ 2 \abs{x_d} \leq \abs{\bar{x}} \leq 4
      \abs{x_d}}}.
$$

\item If $1 < p_0 < d$ and $\lambda>0$ or $p_0=d-0$, then
\begin{equation}\label{du_sub}   
    \abs{\nabla u} \lesssim \abs{x_d}^{-1} \indicator_{
                     \set{
                     2 \abs{x_d} \leq 
                     d(\bar{x},\frC_{\lambda,\gamma}^{d-1}) \leq 4 \abs{x_d}
                     }
                     }
\end{equation}

\item If $p_0>d$ and $\lambda>0$ or $p_0=d+0$, then
\begin{gather}\label{du_super}
    \abs{\nabla u} \lesssim \abs{\bar{x}}^{\frD-1} \log^{-\gamma \frD} (e+|\bar x|^{-1}) \indicator_{ \set{ 
                     d(x_d,\frC_{\lambda,\gamma}) \leq \frac 12 \abs{\bar{x}}}}
                     , \quad \lambda>0,\\   
\label{du_super1}   \abs{\nabla u} \lesssim \abs{\bar{x}}^{-1} \log^{-\gamma}(e+ |\bar x|^{-1}) \indicator_{ \set{ 
                     d(x_d,\frC_{0,\lambda}) \leq \frac 12 \abs{\bar{x}}}}.
\end{gather}

  \end{itemize}
\end{proposition}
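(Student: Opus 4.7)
\medskip

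The plan is to treat the three regimes of Definition~\ref{def:fractal-examples} separately, and in each regime to reduce the gradient estimate to (i)~the explicit bound on~$\nabla u_d$ from Proposition~\ref{pro:prop-uAb-d}, (ii)~the cut-off estimate on~$\nabla\rho$ from Lemma~\ref{lem:smooth-indicator}, and (iii)~the Cantor-measure mass estimate~\eqref{itm:cantor-estimates1} of Lemma~\ref{lem:cantor-estimates}. The $L^\infty$ and smoothness claims are essentially immediate: in each case $u$ is built out of a bounded piece composed (possibly with a convolution by a probability measure) with a smooth factor, and the only place where smoothness can fail is where the building blocks themselves fail to be smooth, namely on~$\frS$.

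For the dimension-matching case $p_0=d$ there is nothing to do beyond invoking Proposition~\ref{pro:prop-uAb-d}: by construction $u=u_d$, and the stated bound on~$|\nabla u|$ is exactly the one recorded there, while $u_d\in L^\infty(\setR^d)\cap C^\infty(\setR^d\setminus\{0\})$ gives the smoothness and boundedness on $\overline\Omega\setminus\frS=\overline\Omega\setminus\{0\}$.

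In the sub-dimensional case $1<p_0<d$ we write $u=\tfrac12\sgn(x_d)\,\rho(x)$. Since $\rho\leq\indicator_{\{d(\bar x,\frC^{d-1}_{\lambda,\gamma})\leq 4|x_d|\}}$, the function $\rho$ vanishes on the hyperplane $\{x_d=0\}$ away from~$\frS$, so the jump of $\sgn(x_d)$ is harmless and a.e.\ we have $\nabla u=\tfrac12\sgn(x_d)\,\nabla\rho$. Plugging in the gradient bound on~$\rho$ from Definition~\ref{def:fractal-examples} (obtained via Lemma~\ref{lem:smooth-indicator}) yields~\eqref{du_sub}. The smoothness of $u$ on $\overline\Omega\setminus\frS$ follows from that of $\rho$, and $u$ is bounded by~$\tfrac12$.

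The main point is the super-dimensional case $p_0>d$, where $u=(\delta_0^{d-1}\times\mu_{\lambda,\gamma})\ast u_d$. Differentiating under the convolution (justified because $u_d\in W^{1,1}_{\loc}$ and we convolve only in the last variable against a finite measure) gives
\begin{equation*}
\nabla u(\bar x,x_d)=\int \nabla u_d(\bar x,x_d-y_d)\,d\mu_{\lambda,\gamma}(y_d),
\end{equation*}
and inserting the estimate $|\nabla u_d(\bar x,z_d)|\lesssim |\bar x|^{-1}\indicator_{\{2|z_d|\leq|\bar x|\leq 4|z_d|\}}$ of Proposition~\ref{pro:prop-uAb-d} yields
\begin{equation*}
|\nabla u(x)|\lesssim |\bar x|^{-1}\,\mu_{\lambda,\gamma}\bigl(\{y_d:|x_d-y_d|\in[|\bar x|/4,|\bar x|/2]\}\bigr).
\end{equation*}
The set on the right is contained in a ball of radius $\tfrac12|\bar x|$ around $x_d$, so by~\eqref{itm:cantor-estimates1} its $\mu_{\lambda,\gamma}$-mass is controlled by $\indicator_{\{d(x_d,\frC_{\lambda,\gamma})\leq|\bar x|/2\}}\,|\bar x|^{\frD}\log^{-\gamma\frD}(e+|\bar x|^{-1})$ when $\lambda>0$, and by the meager analogue when $\lambda=0$; this gives~\eqref{du_super} and~\eqref{du_super1}. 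The smoothness of $u$ away from~$\frS=\{0\}^{d-1}\times\frC_{\lambda,\gamma}$ follows because $u_d(\bar x,x_d-y_d)$ is smooth in $(\bar x,x_d)$ whenever $(\bar x,x_d-y_d)\neq 0$, a condition preserved under integration in $y_d$ against a measure supported on $\frC_{\lambda,\gamma}$ precisely off of $\frS$. The main obstacle in this step is to make sure the cut-off indicator coming from $|\nabla u_d|$ translates cleanly into the level set $\{d(x_d,\frC_{\lambda,\gamma})\leq|\bar x|/2\}$ of~\eqref{itm:cantor-estimates1}; the matching of the constants $\tfrac14,\tfrac12,4$ in the three estimates has to be checked carefully, but the constants in Lemma~\ref{lem:cantor-estimates} are designed to absorb this.
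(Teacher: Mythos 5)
Your proposal is correct and follows essentially the same route the paper intends. The paper gives no written proof — it simply refers to estimate~\eqref{itm:cantor-estimates4} of Lemma~\ref{lem:cantor-estimates} and ``the structure of $u$'' — and your super-dimensional argument is precisely the content of that estimate (reduce the convolution to $|\bar x|^{-1}$ times the $\mu_{\lambda,\gamma}$-mass of a ball of radius $\tfrac12|\bar x|$ and then invoke~\eqref{itm:cantor-estimates1}), so you have merely inlined the proof of~\eqref{itm:cantor-estimates4} rather than citing it; the $p_0=d$ and sub-dimensional cases are handled exactly as in the paper via Proposition~\ref{pro:prop-uAb-d} and the bound on $\nabla\rho$ from Lemma~\ref{lem:smooth-indicator}.
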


\begin{lemma}\label{L:u}
There holds $u\in W^{1,1}(\Omega)$. 
\begin{itemize}

\item There holds $\nabla u \in L^{p_0-\varepsilon}(\Omega)$ for any $\varepsilon>0$. 

\item If $\gamma=0$ there holds $\nabla u \in L^{p_0,\infty}(\Omega)$.

\item In the subcritical construction $\nabla u \in L^{p_0}(\Omega)$ provided that  $\gamma<-1/\frD$ if $p_0<d$ or $\gamma<-1/(d-1)$ and $p_0=d-0$.

\item In the supercritical construction, $\nabla u \in L^{p_0}(\Omega)$ provided that $\gamma\frD (p_0-1)>1$ if $p_0>d$ or if $\gamma (d-1)>1$ and $p_0=d+0$.

\end{itemize}
\end{lemma}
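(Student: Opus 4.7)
The plan is to deduce every claim from the pointwise bounds of Proposition~\ref{pro:est-uAb}. In each geometric configuration (sub-dimensional, super-dimensional, matching dimension) the bound on $\abs{\nabla u}$ is a negative power of a single ``distance-to-singularity'' variable times the indicator of a tubular neighbourhood of the Cantor set. The strategy is to raise to the power $q$, apply Fubini to slice along the non-singular direction, and estimate the slice measure by the tubular-neighbourhood bound~\eqref{itm:cantor-estimates2} of Lemma~\ref{lem:cantor-estimates}.

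Concretely, in the subcritical case with $\lambda>0$, the bound \eqref{du_sub} combined with \eqref{itm:cantor-estimates2} for $m=d-1$ gives
\begin{equation*}
\int_\Omega \abs{\nabla u}^q\,dx \lesssim \int_0^1 \abs{x_d}^{d-1-q-\frD}\log^{\gamma\frD}(e+\abs{x_d}^{-1})\,dx_d,
\end{equation*}
and the defining identity $p_0=d-\frD$ makes the polynomial exponent exactly $-1$ at $q=p_0$. For $q=p_0-\varepsilon$ the exponent is $\varepsilon-1>-1$ (yielding $L^{p_0-\varepsilon}$), whereas at $q=p_0$ the remaining integral $\int_0^1 r^{-1}\log^{\gamma\frD}(e+r^{-1})\,dr$ converges iff $\gamma\frD<-1$. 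The supercritical case~\eqref{du_super} is dual: Fubini in $x_d$ first and~\eqref{itm:cantor-estimates2} with $m=1$ produce, in polar coordinates on $\bar x\in\setR^{d-1}$,
\begin{equation*}
\int_\Omega\abs{\nabla u}^q\,dx \lesssim \int_0^{c} r^{d-1+(\frD-1)q-\frD}\log^{-\gamma\frD(q-1)}(e+r^{-1})\,dr,
\end{equation*}
and the identity $p_0(1-\frD)=d-\frD$ again forces the algebraic exponent to equal $-1$ precisely at $q=p_0$, so the endpoint question reduces to the sign of $\gamma\frD(p_0-1)-1$. The borderline sub/super cases $\frD=\lambda=0$ are handled identically using the logarithmic variant of \eqref{itm:cantor-estimates2}, producing the thresholds $\gamma<-1/(d-1)$ and $\gamma(d-1)>1$ respectively; the Zhikov case $p_0=d$ has the completely explicit geometric indicator $\set{\abs{\bar x}\eqsim\abs{x_d}}$ and is a direct computation.

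For the weak-$L^{p_0}$ assertion when $\gamma=0$, I would invoke Lemma~\ref{L:integr_est} directly: it already packages the required distribution estimate, giving $t^{\frD-d}=t^{-p_0}$ subcritically (with $\beta=1$, $\delta=0$) and $t^{(\frD-d)/(1-\frD)}=t^{-p_0}$ supercritically (with $\beta=1-\frD$, swapping the roles of $\bar x$ and $\hat x$). The inclusion $u\in W^{1,1}(\Omega)$ is then automatic: since $p_0>1$ and $\Omega$ is bounded, choosing $\varepsilon>0$ with $p_0-\varepsilon>1$ gives $\nabla u\in L^{p_0-\varepsilon}(\Omega)\subset L^1(\Omega)$. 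The main obstacle is not any single estimate but the bookkeeping: one must verify case by case that the construction is engineered so that the algebraic exponent after Fubini is exactly $-1$ at $q=p_0$, and then carefully track how the logarithms from the slice measure combine with those already present in $\abs{\nabla u}$ itself (in the supercritical regime only) to produce the stated sharp thresholds on $\gamma$.
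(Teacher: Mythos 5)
Your computations for the integrability assertions are correct and rest on the same ingredients as the paper: both arguments ultimately go through the tubular-neighbourhood volume bound \eqref{itm:cantor-estimates2} and a Fubini slicing; the paper packages this once in the distributional estimate of Lemma~\ref{L:integr_est} and reuses it for every claim, whereas you mostly work directly with $\int|\nabla u|^q$ and only invoke Lemma~\ref{L:integr_est} for the weak-$L^{p_0}$ item. The exponent arithmetic (the design identities $p_0=d-\frD$ and $(1-\frD)p_0=d-\frD$ making the radial exponent exactly $-1$, and the resulting thresholds $\gamma\frD<-1$ and $\gamma\frD(p_0-1)>1$) is verified and matches the paper.

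There is, however, a genuine gap in your treatment of the first claim. You assert that $u\in W^{1,1}(\Omega)$ is ``automatic'' once $\nabla u\in L^{p_0-\varepsilon}(\Omega)\subset L^1(\Omega)$. This does not follow: the bounds you use control the \emph{pointwise} gradient on $\Omega\setminus\frS$, and integrability of a pointwise gradient does not by itself imply that it represents the distributional derivative. In the sub-dimensional construction $u=\tfrac12\sgn(x_d)\rho(x)$ has an actual jump of magnitude $1$ across the contact set $\frS=\frC^{d-1}_{\lambda,\gamma}\times\{0\}$ (the one-sided limits along $x_d\to 0^\pm$ at $\bar x\in\frC^{d-1}_{\lambda,\gamma}$ are $\pm\tfrac12$), so the situation is the same in spirit as the indicator of a half-space: pointwise gradient zero a.e.\ but not Sobolev. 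What saves $u$ is that the jump set $\frS$ is thin --- $\dimH\frS=\frD<d-1$ since $p_0>1$, hence $\mathcal{H}^{d-1}(\frS)=0$ --- and the paper closes the gap precisely via the ACL characterization: $u$ is absolutely continuous on almost all lines (almost every line misses $\frS$), which together with $\nabla u\in L^1(\Omega)$ gives $u\in W^{1,1}(\Omega)$. You should insert this step rather than calling it automatic.
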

\begin{proof}

The first claim follows from estimates \eqref{du_sub}, \eqref{du_super} and Lemma~\ref{L:integr_est}: in the subdimensional case for $\beta=p<p_0$ we have $d>\beta + \frD $ and in the superdimensional case for $\beta = (1-\frD)p$ with $p<p_0$ we also have $d>\beta+\frD$. In particular, $\nabla u\in L^1(\Omega)$. Since $u$ is absolutely continuous on almost all lines this also implies that $u\in W^{1,1}(\Omega)$.

For the second claim in the subdimensional case  we use estimate \eqref{du_sub} and Lemma~\ref{L:integr_est} with $\beta = p_0=d-\frD$, $\delta=0$, and in the superdimensional case we use estimate \eqref{du_super} and Lemma~\ref{L:integr_est} with $\beta = (1-\frD) p_0 = d-\frD$, $\delta=0$.

The third claim follows from estimate \eqref{du_sub} and Lemma~\ref{L:integr_est} with $\beta = p_0= d-\frD$ and $\delta =0$.

The fourth claim follows from estimate \eqref{du_super}, \eqref{du_super1} and Lemma~\ref{L:integr_est} with $\beta =(1-\frD)p_0= d-\frD$ and $\delta = -\gamma \frD p_0$ if $\frD>0$ and $\delta = -\gamma p_0$ if $\frD=0$.
\end{proof}

In a similar way we obtain the following statement.
\begin{lemma}\label{L:b}
There holds $b\in L^{s}(\Omega)$ for any $s<p_0'$.
\end{lemma}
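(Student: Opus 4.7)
The plan is to split into the three cases of Definition~\ref{def:fractal-examples} and, in each, reduce $\int\abs{b}^s\,dx$ to an elementary one-dimensional (or one-radial) integral by Fubini and the neighborhood measure bound of Lemma~\ref{lem:cantor-estimates}~\eqref{itm:cantor-estimates2}. The strict inequality $s<p_0'$ will fall out from a dimension count once $\frD$ is translated back to $p_0$.

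First I would dispose of the Zhikov case $p_0=d$, where $b=\abs{x_d}^{1-d}\indicator_{\{\abs{\bar x}<\abs{x_d}/2\}}$. Here the transverse integration is simply ball volume in $\setR^{d-1}$, and the remaining integral $\int\abs{x_d}^{s(1-d)+d-1}\,dx_d$ is finite iff $s<d/(d-1)=p_0'$. This serves as a warm-up without any Cantor machinery.

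Next, in the subdimensional case, $\abs{b}^s$ carries the factor $\abs{x_d}^{s(\frD+1-d)}\log^{-s\gamma\nu}(e+\abs{x_d}^{-1})$ on the set $\{d(\bar x,\frC^{d-1}_{\lambda,\gamma})\leq\abs{x_d}/2\}$. I would integrate first in $\bar x$, using Lemma~\ref{lem:cantor-estimates}~\eqref{itm:cantor-estimates2} with $m=d-1$ to bound the slice measure by $\abs{x_d}^{d-1-\frD}\log^{\gamma\frD}(e+\abs{x_d}^{-1})$. The leftover $x_d$-integral converges provided $(s-1)(d-1-\frD)<1$, which, via $p_0=d-\frD$, is precisely $s<p_0'$. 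The borderline case $\frD=0$ is handled identically with $\nu=d-1$.

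The superdimensional case mirrors the subdimensional one with the roles of $\bar x$ and $x_d$ swapped: the Cantor set now sits in the $x_d$-line and $b^s=\abs{\bar x}^{s(1-d)}\indicator_{\{2\abs{\bar x}\leq d(x_d,\frC_{\lambda,\gamma})\leq 4\abs{\bar x}\}}$. Fixing $\bar x$ and applying Lemma~\ref{lem:cantor-estimates}~\eqref{itm:cantor-estimates2} with $m=1$ replaces the indicator by a factor $\lesssim\abs{\bar x}^{1-\frD}\log^{\gamma\frD}(e+\abs{\bar x}^{-1})$; polar integration in $\bar x\in\setR^{d-1}$ yields a radial integral convergent iff $s(d-1)<d-\frD$, and from $p_0=(d-\frD)/(1-\frD)$ one computes $p_0'=(d-\frD)/(d-1)$, so again $s<p_0'$ is the sharp threshold.

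No genuine obstacle is anticipated. The only mildly delicate step is the dictionary between $\frD$ and $p_0'$ in the super- and subdimensional cases; once that is verified, the logarithmic factors carried by $b$ (and by the Cantor measure estimate) are harmless, because the threshold $s<p_0'$ is strict and leaves a positive power gap dominating any slowly varying correction. Alternatively, the entire computation can be packaged by applying Lemma~\ref{L:integr_est} directly to $\abs{b}^s$ with $\beta=s(d-1-\frD)$ or $\beta=s(d-1)$ (as appropriate) and $\delta=-s\gamma\nu$, in which case the integrability threshold $d>\beta+\frD$ or $d>\beta$ immediately reproduces $s<p_0'$.
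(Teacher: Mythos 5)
Your proposal is correct and, especially in its concluding ``packaged'' form via Lemma~\ref{L:integr_est} with $\beta = s(d-1-\frD)$ (resp.\ $\beta = s(d-1)$) and $\delta = -s\gamma\nu$, it is the same argument the paper gives; the explicit Fubini-plus-neighborhood-measure version you lead with is simply an unwinding of that lemma. The only cosmetic addition is your separate treatment of the Zhikov case $p_0=d$, which the paper's proof does not spell out but which your elementary calculation handles correctly.
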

\begin{proof}

For $p_0<d$ or $p_0=d-0$ use \eqref{bdef_sub} and Lemma~\ref{L:integr_est} with $\beta = (d-1-\frD)s=(p_0-1)s$ and $\delta = -\gamma \mu s$. It remains to note that for $s<p_0'$ there holds $(p_0-1)s< p_0=d-\frD$. 

For $p_0>d$ or $p_0=d+0$ use \eqref{bdef_super} and Lemma~\ref{L:integr_est} with $\beta = s(d-1)$ and note that for $s< p_0' = (d-\frD)(d-1)^{-1}$ we have $\beta < d-\frD$.
\end{proof}

\subsection{Main assumption and conditional results}\label{ssec:condition}

We shall first state conditional results. Let $1<p_0<\infty$, and $u$ and $b$ be the competitor and auxiliary functions constructed in Definition~\ref{def:fractal-examples} using the Cantor set $\frC^{d-1}_{\lambda,\gamma}$. Let $\Omega = (-1,1)^d$ and
$$
\mathcal{F}(w) = \int\limits_\Omega \phi(x,\abs{\nabla w})\,dx, \quad \mathcal{F}^*(f) = \int\limits_\Omega \phi^*(x,\abs{f})\,dx.
$$

Our main assumption will be the following.

\begin{assumption}
  \label{ass:harmonic2}
  There holds 
\begin{equation}\label{MC1}
\mathcal{F}(u),\mathcal{F}^*(b) < \infty.
\end{equation}
Moreover, for every $\kappa> 0$ there exists $s,t> 0$ such that
  \begin{equation}\label{MC2}
    \mathcal{F}(\eta u) + \mathcal{F}^*(sb) < \kappa\, \eta s.
  \end{equation}
\end{assumption}

We shall also use a weaker form of this assumption. Let $\kappa>0$ be given. 
\begin{assumption}\label{ass:harmonic3} (\textit{with} $\kappa$)
There holds \eqref{MC1} and for a given $\kappa$ there exists $s,t>0$ such that \eqref{MC2} is satisfied.
\end{assumption}
Assumption~\ref{ass:harmonic3} depends on the given parameter $\kappa$.

Note that the results of this section are independent of the particular form of the integrand and rely only on the Assumption~\ref{ass:harmonic2} (or Assumption~\ref{ass:harmonic3} with sufficiently small $\kappa$). Further we shall verify this assumption for particular models.

For $\eta \geq 0$ let $v_\eta$ denote the $W^{1,\phi(\cdot)}(\Omega)$-minimizer of~$\mathcal{F}$ with boundary values $v_\eta = \eta u$:
\begin{equation}\label{vtdef}
v_\eta = \mathrm{arg\,min} \mathcal{F}(\eta u+ W_0^{1,\phi(\cdot)}(\Omega)).
\end{equation}
In the subdimensional case $p_0<d$ or $p_0=d$ we denote the set of non-Lebesgue points of $v_\eta$ by $\Sigma_\eta$. Let $\Sigma'_\eta = \Sigma_\eta \cap ((-1,1)^{d-1}\times \{0\})$. Recall that in the subdimensional case we take $p_0=d-\frD$, $\frD = (d-1) \frac{\log 2}{\log \lambda^{-1}}$, that is $\lambda = 2^\frac{1-d}{d-p_0}$.

\begin{theorem}\label{T:condB}
If $p_0<d$ or $p_0=d-0$, then under Assumption~\ref{ass:harmonic2} (or Assumption~\ref{ass:harmonic3} with sufficiently small $\kappa=\kappa(d,\lambda,\gamma)$) for some $\eta>0$ we have $\mu_{\lambda,\gamma}^{d-1} (\Sigma'_\eta)>\frac 12 $. 
\end{theorem}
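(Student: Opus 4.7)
My plan is to argue by contradiction, pitting the $(\phi,\phi^*)$ duality against trace matching forced by Lebesgue points. Suppose toward contradiction that $\mu^{d-1}_{\lambda,\gamma}(\Sigma'_\eta) \leq 1/2$, so that the set $G := \frC^{d-1}_{\lambda,\gamma} \setminus \Sigma'_\eta$ of Lebesgue points of $v_\eta$ inside the Cantor section of $\frS$ has Cantor measure at least $1/2$. The aim is to derive two incompatible bounds on
\[
\mathcal{I} \;:=\; \int_\Omega b\,\abs{\partial_d v_\eta}\,dx.
\]

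\textbf{Upper bound via minimality.} For any $s>0$ Young's inequality for the generalized Orlicz pair gives pointwise $s\,b\,\abs{\partial_d v_\eta} \leq \phi(x,\abs{\partial_d v_\eta}) + \phi^*(x,sb) \leq \phi(x,\abs{\nabla v_\eta}) + \phi^*(x,sb)$. Integrating over $\Omega$, using the $W$-minimality $\mathcal{F}(v_\eta)\leq \mathcal{F}(\eta u)$, and invoking Assumption~\ref{ass:harmonic3} to pick $s,\eta>0$ with $\mathcal{F}(\eta u)+\mathcal{F}^*(sb)<\kappa\eta s$ yields
\[
s\,\mathcal{I} \;\leq\; \mathcal{F}(v_\eta) + \mathcal{F}^*(sb) \;\leq\; \mathcal{F}(\eta u) + \mathcal{F}^*(sb) \;<\; \kappa\,\eta s,
\]
so $\mathcal{I} < \kappa\,\eta$.

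\textbf{Lower bound via traces.} For each $\bar x \in G$ the Lebesgue-point property at $(\bar x,0)$ together with $\phi$-quasi-continuity of $v_\eta$ forces the upper and lower $\{x_d=0\}$-traces of $v_\eta$ to agree at $\bar x$. Since $\rho(\bar x,\pm 1)=1$ whenever $\bar x\in\frC^{d-1}_{\lambda,\gamma}$, the boundary condition $v_\eta=\eta u$ gives $v_\eta(\bar x,\pm 1)=\pm\eta/2$. Combining absolute continuity of $v_\eta(\bar x,\cdot)$ on $\mathcal{L}^{d-1}$-a.e.\ line (valid for $v_\eta\in W^{1,1}$) with the trace identification at $G$, I expect to obtain
\[
\int_{-1}^1 \abs{\partial_d v_\eta(\bar x, x_d)}\,dx_d \;\geq\; \eta \qquad \text{for } \mu^{d-1}_{\lambda,\gamma}\text{-a.e.\ } \bar x \in G.
\]
The auxiliary weight $b(\bar x,x_d) = \abs{x_d}^{-(p_0-1)}\log^{-\gamma\nu}(e+\abs{x_d}^{-1})\,\indicator_{\{d(\bar x,\frC^{d-1}_{\lambda,\gamma})\leq\abs{x_d}/2\}}$ is engineered, via Lemma~\ref{lem:cantor-estimates}, so that each horizontal slice measure $b(\bar x,t)\,d\bar x$ carries total mass of order one (the exponents $\frD+1-d=-(p_0-1)$ and $\nu=\frD$ are precisely balanced) and weakly approximates a multiple of $\mu^{d-1}_{\lambda,\gamma}$ as $t\to 0$. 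A Fatou/Fubini argument on the fattened cone $\{d(\bar x,G)\leq\abs{x_d}/2\}$ then lifts the line estimate to
\[
\mathcal{I} \;\gtrsim\; \eta\,\mu^{d-1}_{\lambda,\gamma}(G) \;\geq\; \eta/2.
\]

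\textbf{Conclusion and main obstacle.} Combining the two displays forces $\kappa$ to exceed some positive constant $c(d,\lambda,\gamma)$, so choosing $\kappa$ smaller than this constant produces the desired contradiction and hence $\mu^{d-1}_{\lambda,\gamma}(\Sigma'_\eta)>1/2$. The delicate step, which I expect to be the main obstacle, is lifting the line-by-line lower bound over $\mu$-a.e.\ line of $G$ to a bulk lower bound on $\mathcal{I}$: since $G$ is Lebesgue-null in $\mathbb{R}^{d-1}$, one cannot integrate over $G$ directly in Lebesgue measure. The fattening geometry encoded in $b$ and the weak approximation $b(\cdot,t)\,d\bar x \rightharpoonup c_0\,\mu^{d-1}_{\lambda,\gamma}$ (Lemma~\ref{lem:cantor-estimates}) must therefore be used to transfer the estimate, and one must carefully handle the $\mu$-exceptional set where $v_\eta(\bar x,\cdot)$ may fail to be absolutely continuous on the vertical line, leveraging quasi-continuity of $v_\eta$ at Lebesgue points in $G$.
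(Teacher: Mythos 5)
Your upper bound is exactly right and matches the paper's Lemma~\ref{lem:riesz-sublinear00}: Young's inequality plus minimality plus Assumption~\ref{ass:harmonic2}/\ref{ass:harmonic3} gives $\int_\Omega b\,|\nabla v_\eta|\,dx < \kappa\eta$. Your overall strategy (contradiction via $\mu(G)\geq 1/2$ and incompatible bounds on $\mathcal I$) is a legitimate contrapositive of the paper's direct argument in Theorem~\ref{T:diff_trace}. But there is a genuine gap in the lower bound, and you have put your finger on it yourself without closing it.

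The problem is that the estimate $\int_{-1}^1 |\partial_d v_\eta(\bar x,x_d)|\,dx_d \geq \eta$ ``for $\mu^{d-1}_{\lambda,\gamma}$-a.e.\ $\bar x \in G$'' cannot be reached from absolute continuity on $\mathcal L^{d-1}$-a.e.\ vertical line, because $\frC^{d-1}_{\lambda,\gamma}$ (hence $G$) is $\mathcal L^{d-1}$-null: the $\mathcal L^{d-1}$-exceptional set of non-AC lines could, a priori, carry full $\mu$-measure, and there is no a priori sense in which a $W^{1,1}$-function has a well-defined restriction to a $\mu$-a.e.\ line. The proposed repair — weak convergence of the slice measures $b(\cdot,t)\,d\bar x$ to a multiple of $\mu$, plus Fatou/Fubini on a fattened cone — is stated but not carried out, and as written it does not bridge the gap: having $b(\cdot,t)\,d\bar x \rightharpoonup c_0\,\mu$ does not by itself transfer a line-by-line inequality that you do not yet have for $\mu$-a.e.\ line.

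The resolution, which the paper uses in Sections~\ref{ssec:rrp} and~\ref{ssec:trace}, is to never take line integrals at all. One works with cone-averaged traces $\langle v_\eta\rangle^\pm_{\bar x,r}$ (averages over shrinking balls in the cone above/below $(\bar x,0)$), which are honest $d$-dimensional averages of a $W^{1,1}$-function and hence exist and converge whenever the restricted Riesz potential $I_1^{\btimes}[|\nabla v_\eta|](\bar x,0)$ is finite (Corollary~\ref{corr_conv}). The key duality is then Lemma~\ref{lem:restricted-riesz}: a Fubini argument gives $\int I_1^{\btimes}[|\nabla v_\eta|]\,d\mu \lesssim \int_\Omega |\nabla v_\eta|\,b\,dx$, and Lemma~\ref{lem:cantor-estimates}~\eqref{itm:cantor-estimates4} is what makes the kernel on the right exactly $\lesssim b$. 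At a Lebesgue point of $v_\eta$ both cone traces coincide with the precise representative, so the triangle inequality $\eta \leq |\eta/2-(v_\eta)_+|+|(v_\eta)_+-(v_\eta)_-|+|(v_\eta)_-+\eta/2|$ together with $|(v_\eta)_\pm \mp \eta/2|\leq C\,I_1^\pm[|\nabla v_\eta|]$ gives, after integration over $G$ against $\mu$, exactly your desired $\mathcal I \gtrsim \eta\,\mu(G)$ with an explicit constant depending only on $d,\lambda,\gamma$. In short: replace the vertical line integrals by the cone Riesz potentials from the start and your contradiction scheme goes through; as currently written, the lower-bound step is not proved.
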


Theorem~\ref{T:condB} follows from Theorem~\ref{T:diff_trace} proved in the next section. By this result from the Frostman lemma and the regularity of the measure~$\mu$ we derive the following 
\begin{corollary}\label{cor:regul}
In the sub-critical case $1<p_0<d$, under Assumption~\ref{ass:harmonic2} (or Assumption~\ref{ass:harmonic3} with sufficiently small $\kappa=\kappa(d,\lambda,\gamma)$) for some $\eta>0$ there exists a closed  set with  Hausdorff dimension  $\mathrm{dim}_\mathcal{H} \Sigma =\frD$, such that all points of this set are non-Lebesgue points of the minimizer~$v_\eta$. 
\end{corollary}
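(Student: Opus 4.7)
The plan is to deduce the corollary from Theorem~\ref{T:condB} by combining inner regularity of the Cantor measure with a Frostman-type mass distribution argument, thereby extracting a closed subset of non-Lebesgue points of the prescribed Hausdorff dimension. First I would fix, via Theorem~\ref{T:condB}, a value $\eta>0$ for which $\mu:=\mu^{d-1}_{\lambda,\gamma}$ assigns mass greater than $1/2$ to $\Sigma'_\eta=\Sigma_\eta\cap((-1,1)^{d-1}\times\{0\})$. Since $v_\eta\in W^{1,1}(\Omega)$, its Lebesgue set is Borel, and hence so is $\Sigma'_\eta$; inner regularity of the Radon measure $\mu$ then yields a compact set $K\subset\Sigma'_\eta$ with $\mu(K)>0$. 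Because $\mu$ is supported on $\frC^{d-1}_{\lambda,\gamma}\times\{0\}$, the closed subset
\[
\Sigma := K\cap\bigl(\frC^{d-1}_{\lambda,\gamma}\times\{0\}\bigr)\subset\Sigma_\eta
\]
still satisfies $\mu(\Sigma)>0$, and by construction every point of $\Sigma$ is a non-Lebesgue point of~$v_\eta$.

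For the lower bound on $\dim_{\mathcal{H}}(\Sigma)$ I would invoke the mass distribution principle together with Lemma~\ref{lem:cantor-estimates}, which provides the ball bound $\mu(B^{d-1}_r(\bar x))\lesssim r^{\frD}\log^{-\gamma\frD}(e+r^{-1})$. For any fixed $\frD'<\frD$ this is controlled by $r^{\frD'}$ at small radii, so the Frostman mass distribution principle gives $\mathcal{H}^{\frD'}(\Sigma)>0$ and consequently $\dim_{\mathcal{H}}(\Sigma)\geq\frD'$; letting $\frD'\uparrow\frD$ completes the lower bound. The matching upper bound is immediate: $\Sigma\subset\frC^{d-1}_{\lambda,\gamma}\times\{0\}$, and the $(d-1)$-fold Cantor product has Hausdorff dimension $\frD=(d-1)\log 2/\log(\lambda^{-1})$ by the standard covering argument at the natural scales $l_j$.

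The main obstacle is the topological step: a priori $\Sigma_\eta$ need not be closed, so Theorem~\ref{T:condB} alone does not provide a closed witness set. Inner regularity of the Cantor measure is what allows us to pass to a compact (hence closed) subset without losing essential mass, after which Frostman's argument runs routinely and the polylogarithmic correction in the mass estimate is harmless for Hausdorff dimension. The same reasoning applies verbatim under Assumption~\ref{ass:harmonic3} with sufficiently small $\kappa=\kappa(d,\lambda,\gamma)$, since Theorem~\ref{T:condB} is already stated in that generality.
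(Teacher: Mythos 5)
Your proposal is correct and follows the same route the paper indicates: Theorem~\ref{T:condB} supplies a set of positive $\mu^{d-1}_{\lambda,\gamma}$-measure consisting of non-Lebesgue points, inner regularity of the (Radon) Cantor measure yields a compact subset of positive mass, and the Frostman/mass-distribution principle together with the ball bound $\mu(B_r)\lesssim r^{\frD}\log^{-\gamma\frD}(e+r^{-1})$ from Lemma~\ref{lem:cantor-estimates} gives $\dim_{\mathcal{H}}\Sigma\geq\frD$, while containment in $\frC^{d-1}_{\lambda,\gamma}\times\{0\}$ gives the matching upper bound. The paper itself states this only as ``by Frostman and regularity of $\mu$''; your write-up is a faithful and correctly detailed expansion, including the necessary $\frD'\uparrow\frD$ limiting step to absorb the polylogarithmic factor when $\gamma<0$.
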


\begin{theorem}\label{T:condA}
Under Assumption~\ref{ass:harmonic2} (or Assumption~\ref{ass:harmonic3} with sufficiently small $\kappa=\kappa(d,\lambda,\gamma)$) the $W$-minimizer $v_\eta\notin W^{1,p_0+\varepsilon}(\Omega)$ for any $\varepsilon>0$.
\end{theorem}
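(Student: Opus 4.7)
The proof will be a contradiction argument via a duality pairing built from the auxiliary function $b$. Suppose, to the contrary, that $\nabla v_\eta \in L^{p_0+\varepsilon}(\Omega)$ for some $\varepsilon>0$. By Lemma~\ref{L:b} one has $b \in L^{p_0'-\delta}(\Omega)$ for every $\delta>0$, so for $\delta$ small in terms of $\varepsilon$ the field $\vec b := b\, e_d$ (in the sub-dimensional case; analogously $\vec b := b\, e_i$ for a suitable horizontal coordinate direction in the super-dimensional case) lies in $L^{(p_0+\varepsilon)'}(\Omega)$, and the integral pairing of $\vec b$ against $\nabla v_\eta$ is therefore well defined in the duality sense. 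This is the point where the hypothesis of higher integrability is used.

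The geometry of Definition~\ref{def:fractal-examples} is arranged so that $\vec b$ is supported exactly on the rods emanating from $\frS$ on which $u$ carries its sign-jump $\pm\tfrac12$ across $\{x_d=0\}$, while $\vec b$ is divergence-free away from these jump surfaces. As a consequence, for every smooth $w$ with boundary trace $\eta u|_{\partial\Omega}$ one obtains an integration-by-parts identity
\begin{equation*}
    \int_\Omega \vec b \cdot \nabla w\,dx \;=\; c\,\eta,
\end{equation*}
where $c=c(d,\lambda,\gamma)>0$ is a structural constant extracted from the sign-jump of $u$ and the normalisation of $b$. Using $v_\eta - \eta u \in W_0^{1,\phi(\cdot)}(\Omega)$, the contradiction hypothesis, and Lemma~\ref{L:u} ($\nabla u \in L^{p_0-\delta}$ for every $\delta>0$), the difference $v_\eta - \eta u$ can be approximated by $C_c^\infty(\Omega)$ functions in $W^{1,p_0-\delta}(\Omega)$; passing to the limit in the pairing with a scaled field $s\vec b$ yields
\begin{equation*}
    \int_\Omega s\vec b \cdot \nabla v_\eta\,dx \;=\; \int_\Omega s\vec b \cdot \nabla(\eta u)\,dx \;\geq\; c\,\eta s.
\end{equation*}

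Young's inequality in the generalised Orlicz setting together with the minimality $\mathcal F(v_\eta) \leq \mathcal F(\eta u)$ gives
\begin{equation*}
    c\,\eta s \;\leq\; \int_\Omega \phi(x,|\nabla v_\eta|)\,dx + \int_\Omega \phi^*(x,s|b|)\,dx \;\leq\; \mathcal F(\eta u) + \mathcal F^*(sb).
\end{equation*}
Choosing $\kappa < c$ in Assumption~\ref{ass:harmonic2} (or invoking Assumption~\ref{ass:harmonic3} with such $\kappa$) one obtains a pair $\eta, s>0$ with $\mathcal F(\eta u) + \mathcal F^*(sb) < \kappa \eta s < c\eta s$, a contradiction.

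The main obstacle is making the integration-by-parts step $\int \vec b \cdot \nabla v_\eta = \int \vec b \cdot \nabla(\eta u)$ rigorous: the field $\vec b$ misses the borderline dual integrability $L^{p_0'}$ only by a logarithmic factor, so the entire $\varepsilon$-margin of higher integrability of $v_\eta$ is consumed precisely in justifying this pairing. This also explains why the theorem excludes $W^{1,p_0+\varepsilon}$ for every $\varepsilon>0$ but says nothing about $W^{1,p_0}$ itself. Two technical ingredients must be handled with care: first, the construction of $\vec b$ as a genuinely divergence-free (or controlled-surface-divergence) vector field that detects the sign-jump of $u$ on each rod without picking up extraneous bulk contributions; second, the density of $C_c^\infty(\Omega)$ in the appropriate subspace of $W_0^{1,\phi(\cdot)}(\Omega) \cap W^{1,p_0-\delta}(\Omega)$, which is classical but must be combined with the higher-integrability information on $v_\eta$ to conclude that the boundary contribution survives the limit.
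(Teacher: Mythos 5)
Your overall strategy---argue by contradiction, pair $\nabla v_\eta$ against a solenoidal vector field built from $b$, use the divergence theorem to pick out the boundary value $\eta u$, and then close with Young's inequality against Assumption~\ref{ass:harmonic2}---is a genuinely different route from the paper's proof, which goes through Theorems~\ref{T:absenceHI0}, \ref{T:abssuper}: first the jump on $\frS$ is detected (Theorem~\ref{T:diff_trace}, Proposition~\ref{prop:sd1}), then the gradient is shown to be large on shrinking neighbourhoods $C_h$ (resp.\ $\mathcal{M}_{m,j}$) of $\frS$, and Young's inequality with a carefully tuned Orlicz pair forces the modular $\int\Psi(x,|\nabla v_\eta|)$ to blow up. Your duality route is essentially the same mechanism the paper uses for the Lavrentiev gap in the final section, so it is not misguided in spirit; but as written it has three genuine gaps.

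First, $\vec b := b\,e_d$ is \emph{not} divergence-free. The function $b$ in \eqref{bdef_sub} depends on $x_d$ through $|x_d|^{\frD+1-d}\log^{-\gamma\nu}(\cdot)$ and an indicator, so $\partial_{x_d}(b)\neq 0$ away from $\frS$ and the divergence theorem does not apply to $\vec b$. The object you need is the vector kernel $\mathbf{b}$ coming from the trace identity (the integral of $K^+-K^-$ against $\mu$), which satisfies $|\mathbf{b}|\lesssim b$ and is solenoidal precisely because the trace formula $\int(f_+-f_-)\,d\mu=\eta+\int\mathbf{b}\cdot\nabla f$ is an identity; see the section comparing with \cite{BalDieSur20lavrentiev}. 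The bound $|\mathbf{b}|\lesssim b$ is what lets you reuse Lemma~\ref{L:b} and Assumption~\ref{ass:harmonic2}.

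Second, the claimed identity $\int\vec b\cdot\nabla(\eta u)\,dx=c\eta$ is false: by Definition~\ref{def:fractal-examples}, $\operatorname{supp}b\subset\{d(\bar x,\frC)\le\frac12|x_d|\}$ while $\operatorname{supp}\nabla u\subset\{2|x_d|\le d(\bar x,\frC)\le 4|x_d|\}$, so $b\,|\nabla u|\equiv 0$ and the integral is $0$. The ``constant $c\eta$'' is a \emph{boundary} term: for any \emph{smooth} $g$ with the same boundary trace as $\eta u$, the divergence theorem gives $\int\mathbf{b}\cdot\nabla g=\eta\int_{\partial\Omega}u\,\mathbf{b}\cdot\nu\,d\sigma=\pm\eta$. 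The competitor $\eta u$ itself gives $0$ because its jump across $\frS$ cancels the boundary term; this cancellation is exactly the content of $\int(f_+-f_-)d\mu=\eta+\int\mathbf{b}\cdot\nabla f$. Your identity should therefore be $\int\mathbf{b}\cdot\nabla v_\eta=\pm\eta$ \emph{provided} $v_\eta$ has no jump on $\mu$-a.e.\ point of $\frC$, which is what higher integrability must be used to establish.

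Third, the approximation step is in the wrong topology. You decompose $v_\eta=\eta u+(v_\eta-\eta u)$ and approximate $v_\eta-\eta u$ by $C_c^\infty(\Omega)$ in $W^{1,p_0-\delta}$; but $\mathbf{b}\in L^{p_0'-\delta}$ only (Lemma~\ref{L:b}), not in $L^{(p_0-\delta)'}$, so you cannot pass to the limit in the pairing. Moreover $\nabla u\notin L^{p_0+\varepsilon}$, so the hypothesis $\nabla v_\eta\in L^{p_0+\varepsilon}$ does \emph{not} give $\nabla(v_\eta-\eta u)\in L^{p_0+\varepsilon}$. The fix is to choose $g\in C^\infty(\overline\Omega)$ with the same boundary trace as $\eta u$ and write $v_\eta=g+(v_\eta-g)$; then on the Lipschitz domain $\Omega$, the assumption $\nabla v_\eta\in L^{p_0+\varepsilon}$ and vanishing trace give $v_\eta-g\in W_0^{1,p_0+\varepsilon}(\Omega)$, and since $\mathbf{b}\in L^{(p_0+\varepsilon)'}(\Omega)$ and $\divergence\mathbf{b}=0$, density of $C_c^\infty$ in $W_0^{1,p_0+\varepsilon}$ yields $\int\mathbf{b}\cdot\nabla(v_\eta-g)=0$, hence $\int\mathbf{b}\cdot\nabla v_\eta=\int\mathbf{b}\cdot\nabla g=\pm\eta$. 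With $|\mathbf{b}|\le C_1 b$ you then get $\eta\le C_1\int b\,|\nabla v_\eta|\,dx=C_1 J_0(\eta)$, which contradicts Lemma~\ref{lem:riesz-sublinear00} (taking $\kappa<1/C_1$). Repaired in this way, your route does yield the theorem---and, since the final contradiction is independent of $\varepsilon$, it gives a single $\eta$ for which $v_\eta\notin W^{1,p_0+\varepsilon}$ for every $\varepsilon>0$, which is the intended reading. What it does \emph{not} give directly is the sharper logarithmic statements of Corollaries~\ref{corr_integr0} and \ref{corr:integ1}, which is the main payoff of the paper's local route.
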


The proof of Theorem~\ref{T:condA} is divided between Corollary~\ref{corr_integr0} in the subdimensional case and Corollary~\ref{corr:integ1} in the superdimensional case. One can get more precise results using Theorems~\ref{T:absenceHI0}, \ref{T:abssuper}.

Recall that in the superdimensional case $p_0\geq d$ we set $p_0=\frac{d-\frD}{1-\frD}$, $\frD=\frac{\log 2}{\log \lambda^{-1}}$, $\nu=1$ if $p_0=d+0$, and $\nu = \frD$ if $p_0>d$
\begin{theorem}\label{T:condC}
If $p_0=\frac{d-\frD}{1-\frD}$ then under Assumption~\ref{ass:harmonic2} (or Assumption~\ref{ass:harmonic3} with sufficiently small $\kappa$) for some $\eta>0$ the modulus of continuity of the minimizer $v_\eta$ is not better than $\omega(\rho)=C\rho^\frD \log^{-\gamma \nu} (e+\rho^{-1})$ in the sense that it cannot be replaced by any function $\omega_1(\rho)$ such that $\omega_1(\rho) = o(\omega(\rho))$ as $\rho \to 0$.
\end{theorem}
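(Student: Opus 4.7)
The plan is to argue by contradiction: assume $v_\eta$ admits a modulus of continuity $\omega_1$ with $\omega_1(\rho)/\omega(\rho)\to 0$ as $\rho\to 0$, and produce, for each sufficiently large generation $m$ of the Cantor construction, a pair of points $x_m,y_m\in\Omega$ with $|x_m-y_m|\lesssim l_m$ and
\begin{equation*}
|v_\eta(x_m)-v_\eta(y_m)|\gtrsim\eta\,\omega(l_m).
\end{equation*}
This contradicts the assumed modulus bound because $\omega(l_m)\eqsim 2^{-m}$ by the Frostman-type estimate in Lemma~\ref{lem:cantor-estimates}\eqref{itm:cantor-estimates1}. The candidate test points are the endpoints $x_m=(0,a_{m,j})$ and $y_m=(0,b_{m,j})$ of a generation-$m$ pre-Cantor interval $I_{m,j}=[a_{m,j},b_{m,j}]\subset\frC_{\lambda,\gamma}$ chosen around a generic $y^*\in\frC_{\lambda,\gamma}$. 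Indeed, since $u=(\delta_0^{d-1}\times\mu_{\lambda,\gamma})*u_d$ coincides along $\bar x=0$ (up to a constant shift) with the cumulative distribution of $\mu_{\lambda,\gamma}$, one has
\begin{equation*}
u(y_m)-u(x_m)=\mu_{\lambda,\gamma}(I_{m,j})=2^{-m}\eqsim \omega(l_m).
\end{equation*}

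The second step is to manufacture a localised dual test field $B_m$ that detects this jump. I would construct a vector field $B_m$, supported in a $Cl_m$-neighbourhood of $I_{m,j}$, satisfying $|B_m|\lesssim s\,b$ pointwise and with distributional divergence
\begin{equation*}
\divergence B_m=\psi_{m,+}-\psi_{m,-},
\end{equation*}
where $\psi_{m,\pm}$ are smooth nonnegative bumps of unit mass concentrated near $x_m$ and $y_m$. The geometry of $b$ from \eqref{bdef_super} is tailored for this: in the $(d-1)$-dimensional cross section the radial field $c_0|\bar x|^{-d}\bar x$ has divergence $\delta_{\bar x=0}$, and cutting it off smoothly in $x_d$ between two horizontal slabs lying just below $a_{m,j}$ and just above $b_{m,j}$ within the tube $\{2|\bar x|\leq d(x_d,\frC_{\lambda,\gamma})\leq 4|\bar x|\}$ realises the required divergence structure while respecting the pointwise bound $|B_m|\lesssim sb$. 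Integration by parts gives
\begin{equation*}
\int_\Omega v_\eta(\psi_{m,+}-\psi_{m,-})\,dx=-\int_\Omega\nabla v_\eta\cdot B_m\,dx.
\end{equation*}

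Finally, splitting $\nabla v_\eta=\nabla(\eta u)+\nabla(v_\eta-\eta u)$ and integrating the first piece by parts once more yields a main term of order $\eta\bigl(u(y_m)-u(x_m)\bigr)\eqsim\eta\cdot 2^{-m}$. The error term $\int\nabla(v_\eta-\eta u)\cdot B_m\,dx$ is controlled via Young's inequality with the scaling pair $(\eta,s)$ supplied by Assumption~\ref{ass:harmonic2}: combining $\mathcal{F}(v_\eta-\eta u)\lesssim\mathcal{F}(\eta u)$ (from minimality of $v_\eta$ plus the $\Delta_2$-condition) with the bound $\mathcal{F}^*(sB_m)\leq\mathcal{F}^*(sb)$, and exploiting that $B_m$ is supported only in the $Cl_m$-neighbourhood so that the contributing portions of $\mathcal{F}^*(sb)$ and $\mathcal{F}(\eta u)$ scale like $2^{-m}$ up to logarithmic factors, the error is bounded by $C\kappa\eta\cdot 2^{-m}$. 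Choosing $\kappa$ small absorbs the error into the main term, yielding the required lower bound after passing to Lebesgue-point averages.

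The main obstacle is the uniform-in-$m$ construction of $B_m$ with the prescribed divergence while keeping $|B_m|\lesssim sb$ for a fixed $s$ and an appropriately $l_m$-localised energy, so that Young's inequality absorbs the error at every generation. The scaling is governed by the self-similar geometry of the Cantor construction: one fixed model vector field, rescaled and translated onto $I_{m,j}$, serves at each generation. The logarithmic factor $\log^{-\gamma\nu}(e+\rho^{-1})$ in $\omega$ reflects precisely the logarithmic correction $\log^{-\gamma\frD}$ in the Frostman bound of Lemma~\ref{lem:cantor-estimates}\eqref{itm:cantor-estimates1} for the measure of generation-$m$ pre-Cantor intervals; the two values $\nu=\frD$ (when $p_0>d$) and $\nu=1$ (in the limit case $p_0=d+0$) correspond to the two branches of that estimate as recorded in Definition~\ref{def:fractal-examples}.
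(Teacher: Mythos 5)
Your overall plan — argue by contradiction from an assumed modulus $\omega_1 = o(\omega)$, use the Frostman estimate to note $\omega(l_m)\eqsim 2^{-m}$, and compare trace values at the endpoints of a pre-Cantor interval — is the right scaffolding, and the observation that $u|_{\bar{x}=0}$ is the cumulative distribution of $\mu_{\lambda,\gamma}$ so that $u(y_m)-u(x_m)=2^{-m}$ is exactly the mechanism at work. But there is a genuine gap in the error control.

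The issue is localization. To make the main term $\eta 2^{-m}$ dominate, you need the error $\int \nabla(v_\eta-\eta u)\cdot B_m\,dx$ to be $\lesssim\kappa\eta\,2^{-m}$ for a \emph{single fixed} interval $I_{m,j}$. Young's inequality produces $\frac{1}{s}\int_{\mathrm{supp}\,B_m}\phi(x,|\nabla(v_\eta-\eta u)|)\,dx + \frac{1}{s}\mathcal{F}^*(sB_m)$. The second term can indeed be localized, since $B_m$ is a rescaled copy of the model field and $b$ is equidistributed over the $2^m$ tubes. But the first term cannot: minimality plus $\Delta_2$ only give you the \emph{global} bound $\mathcal{F}(v_\eta-\eta u)\lesssim\mathcal{F}(\eta u)\lesssim\kappa\eta s$, with no information on how the energy of $v_\eta-\eta u$ is distributed among the $2^m$ regions. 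The minimizer could in principle concentrate almost all of its gradient energy near your chosen $I_{m,j}$, in which case the localized error is of order $\kappa\eta$, swamping the main term $\eta 2^{-m}$ for large $m$. The statement ``the contributing portion of $\mathcal{F}(\eta u)$ scales like $2^{-m}$'' is correct for the competitor $u$ by construction, but there is no analogous equidistribution for $v_\eta$, and no choice of a ``generic'' $y^*\in\frC_{\lambda,\gamma}$ fixes this.

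The paper circumvents this by never trying to isolate one interval: Proposition~\ref{prop:sd1} telescopes the trace values of $v_\eta$ along $\bar{x}=0$ to show that the \emph{sum} over all $2^m$ pre-Cantor intervals, $\sum_j (v_\eta(\bar{0},\xi^+_{m,j})-v_\eta(\bar{0},\xi^-_{m,j}))$, differs from $\eta$ by the aggregate of gap-crossing terms, and that aggregate is what $\int|\nabla v_\eta|\,b\,dx$ controls (estimate~\eqref{superdM}). This gives $\sum_j|\cdots|\geq\eta/2$ directly, and then a pigeonhole (dividing by $2^m$ and using $2^m\omega(l_m)\lesssim 1$) produces at least one interval with a jump $\gtrsim\eta\omega(l_m)$, which contradicts $\omega_1=o(\omega)$. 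In effect, your duality identity is fine, but it must be \emph{summed over $j$} before Young's inequality is applied; after that step you are forced to invoke a pigeonhole rather than a per-interval estimate, and the argument collapses to the paper's. The remaining ingredients you mention (cone-limit traces/Lebesgue-point averages, the logarithmic factor coming from the two branches of Lemma~\ref{lem:cantor-estimates}\eqref{itm:cantor-estimates1}, the role of $\nu$) are correctly identified and match the paper.
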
 

\section{Irregularity of minimizers: subdimensional case}\label{sec:sub}

In this section we state a conditional result which claims that the set of discontinuity of the minimizer is roughly speaking almost as large as our contact set $\frS = \frC^{d-1}_{\lambda,\gamma} \times\{0\}$. From this fact we derive non-improvement of the integrability of the gradient of the minimizer.

\subsection{Restricted Riesz potentials --- subdimensional case}\label{ssec:rrp}

Let $y=(\bar y, y_d)$ and $C = \{ \abs{\bar{y}} \leq \frac 12 \abs{y_d} \}$, $C^{\pm} = C \cap \{\pm y_d>0\}$. Denote $C^{\pm}(x) = \{x+y:\, y\in C^\pm\}$, $C(x)=\{x+y:\, y\in C\}$. 

Define the restricted Riesz potentials $I^\pm_1(f)(x)$ and $I_1^{\btimes} f(x)$ by
\begin{align*}
    I^\pm_1(f)(x):=  \int\limits_{C^{\pm}(x)}\frac{\abs{f(y)}}{\abs{x-y}^{d-1}}\, dy= \int\limits_{\setR^d}\frac{f(x-y)}{\abs{y}^{d-1}} \indicator_{\set{\abs{\bar{y}} \leq \pm\frac 12 \abs{y_d}}}\, dy,\\ 
I_1^{\btimes} f(x)= I_1^+ f(\bar{x},0)  + I_1^- f(\bar{x},0) \\
= \int\limits_{C(x)}\frac{\abs{f(y)}}{\abs{x-y}^{d-1}}\, dy=\int\limits_{\setR^d}\frac{f(x-y)}{\abs{y}^{d-1}} \indicator_{\set{\abs{\bar{y}} \leq \frac 12 \abs{y_d}}}\, dy.
\end{align*}


Recall that in the subdimensional case we set $\frS = \frC^{d-1}_{\lambda,\gamma} \times\{0\}$. 
Denote 
$$
\mathcal{M}^+=\bigcup\limits_{x\in \frS} C(x), \quad \mathcal{M}^{-}=\Omega \setminus \mathcal{M}^+.
$$

Let the function $b$ be from Definition~\ref{def:fractal-examples}. Then $\mathrm{supp}\, b \in \mathcal{M}^+$.


\begin{lemma}
  \label{lem:restricted-riesz}
 For a function $f$ with support in $\overline \Omega $ there holds
  \begin{align*}
    \int\limits_{(-1,1)^{d-1}} I_1^{\btimes}(\abs{f})(\bar x,0)d\mu^{d-1}_{\lambda,\gamma}(\bar x)
    &\leq C(d,\lambda,\gamma)\, \int\limits_{\Omega} \abs{f(y)} \cdot
     b\,dy,
  \end{align*}

\end{lemma}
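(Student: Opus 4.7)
The plan is to swap the order of integration and reduce the inner integral to a ball–measure estimate from Lemma~\ref{lem:cantor-estimates}. Writing $y=(\bar y,y_d)$, I would first use Fubini on the double integral
\[
\int_{(-1,1)^{d-1}} I_1^{\btimes}(|f|)(\bar x,0)\,d\mu^{d-1}_{\lambda,\gamma}(\bar x)
=\int_{\mathbb{R}^d}\int_{(-1,1)^{d-1}}\frac{|f((\bar x,0)-y)|}{|y|^{d-1}}\indicator_{\{|\bar y|\le \frac12|y_d|\}}\,d\mu^{d-1}_{\lambda,\gamma}(\bar x)\,dy,
\]
and then substitute $z:=(\bar x,0)-y$ (so $y=(\bar x-\bar z,-z_d)$). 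The cone condition $|\bar y|\le \tfrac12|y_d|$ translates into $|\bar x-\bar z|\le \tfrac12|z_d|$, and the integral becomes
\[
\int_{\Omega}|f(z)|\Bigl(\int_{(-1,1)^{d-1}}\frac{\indicator_{\{|\bar x-\bar z|\le \frac12|z_d|\}}}{(|\bar x-\bar z|^2+z_d^2)^{(d-1)/2}}\,d\mu^{d-1}_{\lambda,\gamma}(\bar x)\Bigr)dz.
\]

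The key observation is that on the support of the indicator one has $|\bar x-\bar z|^2+z_d^2\eqsim z_d^2$, so the denominator is comparable to $|z_d|^{d-1}$ and the inner integral collapses to
\[
|z_d|^{-(d-1)}\,\mu^{d-1}_{\lambda,\gamma}\bigl(B^{d-1}_{|z_d|/2}(\bar z)\bigr).
\]
Now I invoke estimate~\eqref{itm:cantor-estimates1} of Lemma~\ref{lem:cantor-estimates} with $r=|z_d|/2$: for $\lambda>0$ this gives
\[
\mu^{d-1}_{\lambda,\gamma}(B^{d-1}_{|z_d|/2}(\bar z))\lesssim \indicator_{\{d(\bar z,\frC^{d-1}_{\lambda,\gamma})\le |z_d|/2\}}\,|z_d|^{\frD}\log^{-\gamma\frD}(e+|z_d|^{-1}),
\]
and for $\lambda=0$ the analogous bound with $|z_d|^{0}\log^{-\gamma(d-1)}(e+|z_d|^{-1})$. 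In both cases the product of $|z_d|^{-(d-1)}$ with the measure bound is precisely the weight $b(z)$ defined in~\eqref{bdef_sub} (using the convention $\nu=\frD$ or $\nu=d-1$), up to a constant depending only on $d,\lambda,\gamma$.

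Combining the two estimates yields
\[
\int_{(-1,1)^{d-1}} I_1^{\btimes}(|f|)(\bar x,0)\,d\mu^{d-1}_{\lambda,\gamma}(\bar x)\lesssim \int_{\Omega}|f(z)|\,b(z)\,dz,
\]
as claimed. The main potential obstacle is the precise bookkeeping in the substitution and the passage from integration over $(-1,1)^{d-1}$ to $\mathbb{R}^{d-1}$ (since $\mu^{d-1}_{\lambda,\gamma}$ is supported in the unit cube, this is harmless), together with verifying that the $\frD=0$ case produces the correct logarithmic weight matching $\nu=d-1$. Everything else is a direct dyadic-type bound on a Riesz kernel against a measure with controlled ball growth.
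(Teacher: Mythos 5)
Your proof is correct and follows essentially the same route as the paper: Fubini to interchange the Riesz-kernel integral with the Cantor-measure integral, then bound the resulting kernel $K(y)$ by $b(y)$ via the Cantor measure estimates of Lemma~\ref{lem:cantor-estimates}. The only cosmetic difference is that the paper invokes item~\eqref{itm:cantor-estimates4} of that lemma as a black box, whereas you effectively reprove that item inline by noting $|x-y|\eqsim|z_d|$ on the cone and then applying the ball-growth estimate~\eqref{itm:cantor-estimates1} directly.
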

\begin{proof}
  Let $x := (\bar{x},0)$. Then with $y=(\bar{y},y_d)$ by Fubini's theorem we have
  \begin{align*}
    \lefteqn{\int\limits_{(-1,1)^{d-1}} I_1^{\btimes}(\abs{f})(\bar x,0)\,d\mu^{d-1}_{\lambda,\gamma}(\bar
    x) } \qquad
    \\
    &=\int\limits_{(-1,1)^{d-1}} \int \frac{\abs{f(y)}}{\abs{x-y}^{d-1}
      }
      \indicator_{\set{\abs{\bar{x}-\bar{y}}< \frac 12\abs{y_d}}}\,dy\, d\mu^{d-1}_{\lambda,\gamma}(\bar{x})
    \\
    &=\int\limits_{\setR^d} |f(y)| \bigg(\underbrace{\int\limits_{(-1,1)^{d-1}} \frac{1}{\abs{x-y}^{d-1}
      } \indicator_{\set{\abs{\bar{x}-\bar{y}}< \frac 12 \abs{y_d}}}\,d\mu^{d-1}_{\lambda,\gamma}(\bar x)}_{=:K(y)} \Bigg) \, dy.
  \end{align*}
  The estimate on $K(y)$ by Lemma~\ref{lem:cantor-estimates} yields  $K \lesssim b$.
\end{proof}

\subsection{Restricted Riesz potential for minimizers}

Let $Q\geq 1$. Let $w_\eta$ be a function such that $w_\eta\in \eta u + W_0^{1,\phi(\cdot)}(\Omega)$ and $\mathcal{F}(w_\eta)\leq Q \mathcal{F}(\eta u)$. Mainly we shall use  $w_\eta=v_\eta$ (recall that by $v_\eta$ we denote the minimizer of $\mathcal{F}$ with the boundary value $u\eta$). This corresponds to $Q=1$.

Denote
$$
J_0(\eta)= \int\limits_{(-1,1)^d} \abs{\nabla w_\eta(y)}\, b\,dy. 
$$

\begin{lemma}
  \label{lem:riesz-sublinear00}
Under Assumption~\ref{ass:harmonic2}, for every $\eta>0$ the integral $J_0(\eta)$ is finite. Moreover, for every $\kappa>0$ there exists $\eta>0$ such that $J_0(\eta) \leq \kappa \,\eta$.
\end{lemma}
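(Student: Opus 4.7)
The plan is to bound $J_0(\eta)$ via the pointwise Young inequality for the generalized Orlicz function $\phi$ and its conjugate $\phi^*$, then invoke the quasi-minimality $\mathcal{F}(w_\eta)\le Q\mathcal{F}(\eta u)$ and the two parts of Assumption~\ref{ass:harmonic2}.

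The key pointwise estimate is: for any $s>0$ and a.e.\ $y\in\Omega$,
\begin{equation*}
s\,\abs{\nabla w_\eta(y)}\,b(y)\;=\;\abs{\nabla w_\eta(y)}\cdot\bigl(sb(y)\bigr)\;\le\;\phi\bigl(y,\abs{\nabla w_\eta(y)}\bigr)+\phi^*\bigl(y,sb(y)\bigr).
\end{equation*}
Integrating over $\Omega$ and using the quasi-minimality assumption gives
\begin{equation*}
s\,J_0(\eta)\;\le\;\mathcal{F}(w_\eta)+\mathcal{F}^*(sb)\;\le\;Q\,\mathcal{F}(\eta u)+\mathcal{F}^*(sb)\;\le\;Q\bigl[\mathcal{F}(\eta u)+\mathcal{F}^*(sb)\bigr].
\end{equation*}

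For the first assertion, I fix an arbitrary $\eta>0$ and any $s>0$. By \eqref{MC1} we have $\mathcal{F}(u),\mathcal{F}^*(b)<\infty$, and iterating the $\Delta_2$ conditions \eqref{eq:phi-Delta2} for $\phi$ and $\phi^*$ yields $\mathcal{F}(\eta u)<\infty$ and $\mathcal{F}^*(sb)<\infty$ for every choice of $\eta,s>0$. Dividing the displayed inequality by $s$ then gives $J_0(\eta)<\infty$.

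For the quantitative claim, given $\kappa>0$ I apply Assumption~\ref{ass:harmonic2} (or Assumption~\ref{ass:harmonic3}) with $\kappa':=\kappa/Q$ in place of $\kappa$: this produces $s>0$ and $\eta>0$ for which $\mathcal{F}(\eta u)+\mathcal{F}^*(sb)<(\kappa/Q)\,\eta s$. Inserting this into the displayed bound yields
\begin{equation*}
s\,J_0(\eta)\;\le\;Q\bigl[\mathcal{F}(\eta u)+\mathcal{F}^*(sb)\bigr]\;<\;Q\cdot\frac{\kappa}{Q}\,\eta s\;=\;\kappa\,\eta s,
\end{equation*}
and dividing by $s$ gives the desired inequality $J_0(\eta)\le\kappa\,\eta$.

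There is no genuine obstacle here: the argument is a one-shot Young-inequality duality pairing, and the only point to watch is the harmless extra factor $Q$ arising because $w_\eta$ is merely quasi-minimal rather than minimal, which is absorbed by relabelling $\kappa\mapsto\kappa/Q$ in the application of the assumption. The strength of the lemma comes entirely from the nontrivial content of \eqref{MC2}, namely that the sum $\mathcal{F}(\eta u)+\mathcal{F}^*(sb)$ can be made small relative to the product $\eta s$.
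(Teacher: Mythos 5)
Your proof is correct and follows essentially the same route as the paper's: the Young inequality pairing $s\abs{\nabla w_\eta} b \le \phi(\cdot,\abs{\nabla w_\eta}) + \phi^*(\cdot, s b)$, integration, the quasi-minimality bound $\mathcal{F}(w_\eta)\le Q\mathcal{F}(\eta u)$, and then an application of \eqref{MC2} with $\kappa$ replaced by $\kappa/Q$. You also spell out why $J_0(\eta)<\infty$ for \emph{every} $\eta$ (using the $\Delta_2$-condition to pass from $\mathcal{F}(u),\mathcal{F}^*(b)<\infty$ to $\mathcal{F}(\eta u),\mathcal{F}^*(sb)<\infty$), a detail the paper's proof leaves implicit.
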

\begin{proof}
  For all $\eta,s>0$
  $$
    J_0(\eta) \leq \frac{1}{s} \big(\mathcal{F}(w_\eta) + \mathcal{F}^*(s b)\big)
  $$
  Using that $\mathcal{F}(w_eta)\leq Q \mathcal{F}(\eta u)$ for all $\eta,s>0$ we obtain   
  $$
    J_0(\eta)\leq \frac{Q}{s} \big(\mathcal{F}(\eta u) + \mathcal{F}^*(s b)\big).
  $$
  Due to Assumption~\ref{ass:harmonic2} we can find and fix~$\eta$ and
  $s$ such that
  $$
    \mathcal{F}(\eta u) + \mathcal{F}^*(s b) < \kappa\, \eta s Q^{-1}.
  $$
  This proves as desired~$J_0(\eta) \leq \kappa\,\eta$.
\end{proof}

The same argument also gives
\begin{lemma}\label{lem:riesz-sublinear01}
Under Assumption~\ref{ass:harmonic3} the integral $J_0(\eta)$ is finite. For every $\kappa'>0$ there exists such $\kappa>0$ that  Assumption~\ref{ass:harmonic3} with this $\kappa$ gives $J_0(\eta)\leq \kappa' \eta$. 
\end{lemma}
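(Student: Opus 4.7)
The plan is to repeat the proof of Lemma~\ref{lem:riesz-sublinear00} essentially verbatim, but to track quantitatively how the single constant $\kappa$ appearing in Assumption~\ref{ass:harmonic3} controls the resulting upper bound on $J_0(\eta)$. The starting point is the same pointwise Young inequality for the conjugate pair $(\phi(y,\cdot),\phi^*(y,\cdot))$: for any $s>0$ and a.e.\ $y\in\Omega$,
$$
\abs{\nabla w_\eta(y)}\,b(y)\;\leq\;\frac{1}{s}\Bigl(\phi(y,\abs{\nabla w_\eta(y)}) + \phi^*(y,sb(y))\Bigr).
$$
Integrating over $\Omega$ and using the hypothesis $\mathcal{F}(w_\eta)\leq Q\,\mathcal{F}(\eta u)$ gives
$$
J_0(\eta)\;\leq\;\frac{1}{s}\bigl(\mathcal{F}(w_\eta)+\mathcal{F}^*(sb)\bigr)\;\leq\;\frac{Q}{s}\bigl(\mathcal{F}(\eta u)+\mathcal{F}^*(sb)\bigr),
$$
which is the master estimate driving both claims.

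For the finiteness statement, I would argue that \eqref{MC1} together with the $\Delta_2$-condition on $\phi$ (resp.\ on $\phi^*$, via the $\nabla_2$-condition on $\phi$) yields $\mathcal{F}(\eta u)<\infty$ and $\mathcal{F}^*(sb)<\infty$ for every $\eta,s>0$; inserting any such $s>0$ into the master estimate immediately gives $J_0(\eta)<\infty$.

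For the quantitative statement, given $\kappa'>0$ I would choose $\kappa:=\kappa'/Q$ and then apply Assumption~\ref{ass:harmonic3} with \emph{this} value of $\kappa$: it furnishes some $\eta,s>0$ satisfying $\mathcal{F}(\eta u)+\mathcal{F}^*(sb)<\kappa\eta s=(\kappa'/Q)\,\eta s$. Substituting into the master estimate yields
$$
J_0(\eta)\;\leq\;\frac{Q}{s}\cdot\frac{\kappa'}{Q}\,\eta s\;=\;\kappa'\eta,
$$
as desired.

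Since the whole argument is just a quantitative rereading of Lemma~\ref{lem:riesz-sublinear00}, I do not expect any genuine obstacle. The only point that needs care is that the factor $Q$ is a fixed parameter of the approximate minimizer $w_\eta$ (independent of the constant in Assumption~\ref{ass:harmonic3}), so it can be absorbed once and for all by rescaling $\kappa\mapsto\kappa'/Q$ in the hypothesis.
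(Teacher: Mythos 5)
Your proposal is correct and follows essentially the same route as the paper, which simply says ``the same argument also gives'' and refers back to the proof of Lemma~\ref{lem:riesz-sublinear00}. You spell out the two points the paper leaves implicit --- that finiteness of $\mathcal{F}(\eta u)$ and $\mathcal{F}^*(sb)$ for all $\eta,s>0$ follows from \eqref{MC1} plus the $\Delta_2/\nabla_2$ conditions, and that the correct rescaling is $\kappa=\kappa'/Q$ --- both of which match what the paper intends (cf.\ the bound $\mathcal{F}(\eta u)+\mathcal{F}^*(sb)<\kappa\,\eta s\,Q^{-1}$ in the proof of Lemma~\ref{lem:riesz-sublinear00}).
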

Note that in Lemma~\ref{lem:riesz-sublinear01} the number $\eta$ is not arbitrary, it comes from Assumption~\ref{ass:harmonic3} with specific $\kappa$.

\begin{lemma}\label{lem:riesz-sublinear}
Under Assumption~\ref{ass:harmonic2} or Assumption~\ref{ass:harmonic3} the integral 
$$
J(t)=\int\limits_{(-1,1)^{d-1}} I_1^{\btimes}(\indicator_{(-1,1)^d} \abs{\nabla w_\eta})(\bar x,0)\,d\mu^{d-1}_{\lambda,\gamma}(\bar x)<\infty.
$$
Under Assumption~\ref{ass:harmonic2} for every $\kappa>0$ there exists $\eta>0$ such that $J(\eta) \leq \kappa \,t$. For every $\kappa'>0$ there exists such $\kappa$ that  Assumption~\ref{ass:harmonic3} with this $\kappa$ gives $J(\eta)\leq \kappa' \eta$. 
\end{lemma}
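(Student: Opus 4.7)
The plan is to observe that Lemma~\ref{lem:riesz-sublinear} follows almost immediately by combining the pointwise-type bound of Lemma~\ref{lem:restricted-riesz} with the two energy estimates in Lemmas~\ref{lem:riesz-sublinear00} and~\ref{lem:riesz-sublinear01}. There is essentially no new analytic content; the role of this lemma is simply to transport the weighted bound $\int |\nabla w_\eta|\,b\,dy$ into the statement of a restricted Riesz potential integrated against the Cantor measure, which is the form actually needed later to detect non-Lebesgue points on~$\frS$.

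First I would apply Lemma~\ref{lem:restricted-riesz} with the choice $f=\indicator_{(-1,1)^d}\abs{\nabla w_\eta}$. Since $w_\eta \in \eta u + W^{1,\phi(\cdot)}_0(\Omega)$ and the competitor $u$ itself lies in $W^{1,1}(\Omega)$ by Lemma~\ref{L:u}, the function $f$ is in $L^1(\Omega)$ and has support in $\overline{\Omega}$, so the hypothesis of Lemma~\ref{lem:restricted-riesz} is met. This yields
\begin{equation*}
 J(\eta) \;=\; \int\limits_{(-1,1)^{d-1}} I_1^{\btimes}\!\big(\indicator_{(-1,1)^d}\abs{\nabla w_\eta}\big)(\bar x,0)\,d\mu^{d-1}_{\lambda,\gamma}(\bar x) \;\leq\; C(d,\lambda,\gamma)\, J_0(\eta).
\end{equation*}

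Next, under Assumption~\ref{ass:harmonic2}, Lemma~\ref{lem:riesz-sublinear00} gives that $J_0(\eta)$ is finite for every $\eta>0$, and that for every $\kappa>0$ one can find $\eta>0$ with $J_0(\eta)\leq (\kappa/C)\eta$; substituting into the displayed inequality gives $J(\eta)\leq \kappa\eta$, which is the claim. Under the weaker Assumption~\ref{ass:harmonic3} with parameter~$\kappa$, Lemma~\ref{lem:riesz-sublinear01} provides a chain $\kappa' \mapsto \kappa$ ensuring $J_0(\eta)\leq (\kappa'/C)\eta$ for the specific $\eta$ furnished by the assumption; feeding this into the inequality above delivers $J(\eta)\leq \kappa'\eta$.

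The only point requiring any care is keeping track of the multiplicative constant $C(d,\lambda,\gamma)$ produced by Lemma~\ref{lem:restricted-riesz}. Since this constant depends only on the dimension and on the Cantor parameters (which are fixed once the construction in Definition~\ref{def:fractal-examples} is chosen), it may be absorbed at the level of the quantifier on $\kappa$: given the target $\kappa$ (resp.\ $\kappa'$), one simply applies Lemma~\ref{lem:riesz-sublinear00} (resp.\ Lemma~\ref{lem:riesz-sublinear01}) with $\kappa/C$ in place of $\kappa$. No other step appears to pose a real obstacle.
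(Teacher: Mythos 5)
Your argument is exactly the paper's: apply Lemma~\ref{lem:restricted-riesz} with $f=\indicator_{(-1,1)^d}\abs{\nabla w_\eta}$ to bound $J(\eta)\leq C(d,\lambda,\gamma)\,J_0(\eta)$, then invoke Lemma~\ref{lem:riesz-sublinear00} or Lemma~\ref{lem:riesz-sublinear01} and absorb the constant into the choice of $\kappa$. The paper states the proof as an immediate consequence of these three lemmas, so you have correctly reconstructed it.
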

\begin{proof}
Follows immediately from Lemmas~\ref{lem:restricted-riesz}, \ref{lem:riesz-sublinear00}, \ref{lem:riesz-sublinear01}.
\end{proof}

\subsection{Trace estimates}\label{ssec:trace}

Let $\omega \in L^\infty(B^d_1(0))$, $\omega\geq 0$, $\mathrm{supp}\, \omega \subset \{|x_d|<1/2\}\cap B_1^d(0)$, and $\int\limits_{B^d_1(0)} \omega\, dx=1$. Define 
$$
\omega^\pm_r(y) = r^{-d}\omega(r^{-1}(\bar y - \bar x), r^{-1}y_d \mp 1), \quad \widetilde{B}^{\pm}_r (\bar x) = B^d_{r} (\bar x,\pm r).
$$
We have $\mathrm{supp}\, \omega^\pm_r \subset \widetilde{B}^{\pm}_r (\bar x)$. Denote 
$$
D^{\pm}_{r_1,r_2}=\mathrm{conv}\, (\mathrm{supp}\, \omega^\pm_{r_1}\cup \mathrm{supp}\, \omega^\pm_{r_2})
$$
For $\bar x\in (-1,1)^{d-1}$, $r>0$ and a function $f$ define

$$
\langle  f\rangle^\pm_{\bar x, r} =\int\limits_{B_1^d(0)} \omega(y) f(\bar x+r \bar y, y_d r\pm r)\, dy =\int\limits_{\widetilde{B}^{\pm}_r (\bar x)} f(z)\omega_r(z)\, dz.  
$$
Let $0<r_1<r_2$. For $f\in W^{1,1} (D^+_{r_1,r_2})$ define the function $\widetilde f(y,s) =\omega(y) f(\bar x+s \bar y, y_d s+s)$.  We have 
\begin{gather*}
\langle  f\rangle^+_{\bar x,r_2} - \langle  f\rangle^+_{\bar x ,r_1} 
=\int\limits_{B^{d}_1(0)} (\widetilde{f} (y,r_2) - \widetilde{f}(y,r_1))\, dy\\
=\int\limits_{B^{d}_1(0)} \int\limits_{r_1}^{r_2} \omega(y)(\bar y, y_d+1)\cdot \nabla f (\bar x+s \bar y, y_d s+s)\, dy\, ds\\
=\int\limits_{D^+_{r_1,r_2}} K^{+}(\bar x, z, r_1,r_2)\nabla f(z)\, dz,\\
K^+(\bar x, z, r_1,r_2) = (z- (\bar x,0)) \int\limits_{r_1}^{r_2} \omega (s^{-1}(\bar z-\bar x), s^{-1}z_d-1) s^{-1-d} \, ds.
\end{gather*}
If $\omega (s^{-1}(\bar z-\bar x), s^{-1}z_d-1)\neq 0$  then $2z_d/3\leq s\leq 2z_d$ and $|\bar z - \bar x|\leq 2z_d$. Thus 
$$
|K^+(\bar x, z, r_1,r_2)| \leq C(d)\|\omega\|_\infty \indicator_{D_{r_1,r_2}} z_d^{1-d}.
$$
Thus for $0<r_1<r_2$ we have 
\begin{gather*}
|\langle  f\rangle^+_{\bar x, r_2} - \langle  f\rangle^+_{\bar x,r_1}| \leq C(d)\|\omega\|_\infty\int  \indicator_{D^+_{r_1,r_2}} (z) |\nabla f(z)| z_d^{1-d}\, dz\\
\leq C(d)\|\omega\|_\infty \int\limits_{C^{+}(\bar x,0)} \indicator_\set{r_1/2\leq z_d\leq 2r_2}(z) |\nabla f(z)| z_d^{1-d}\, dz.
\end{gather*}
Similarly, for $0>r_1>r_2$ we hat 
\begin{gather*}
|\langle  f\rangle^-_{\bar x,r_2} - \langle  f\rangle^-_{\bar x ,r_1}| \leq C(d)\|\omega\|_\infty\int  \indicator_{D^-_{|r_1|,|r_2|}} (z) |\nabla f(z)| z_d^{1-d}\, dz\\
\leq C(d)\|\omega\|_\infty\int\limits_{C^{-}(\bar x,0)} \indicator_\set{2r_2 <z_d<r_1/2}(z) |\nabla f(z)| z_d^{1-d}\, dz.
\end{gather*}
Let $\mu$ be the Cantor measure corresponding to the given fractal set $\frC=\frC^{d-1}_{\lambda,\gamma}$.
\begin{corollary}\label{corr_conv}
For any point $\bar x\in (-1,1)^{d-1}$ where $I_1^{\btimes}[\nabla f] (\bar x, 0)$ is finite there exist the limits (upper and lower traces)
\begin{equation}\label{conv}
f_+(\bar x) := \lim_{r\to 0+} \langle f \rangle^+_{\bar x,r} , \quad  f_{-}(\bar x) := \lim_{r\to 0-} \langle  f\rangle^-_{\bar x,r}. 
\end{equation} 
If moreover 
$$
\int\limits_{(-1,1)^{d-1}} I_1^{\btimes}(|\nabla f|\indicator_{(-1,1)^d})(\bar x,0) d\mu(\bar x)< \infty
$$ 
then convergence in \eqref{conv} is in the sense of $L^1(\mu)$ and the upper and lower traces $f_{\pm}$ exist in the sense of $L^1((-1,1)^{d-1},\mu)$.
\end{corollary}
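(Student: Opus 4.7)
The plan is to combine the pointwise oscillation estimate derived just before the corollary with absolute continuity of the Lebesgue integral (for pointwise convergence) and with dominated convergence (for $L^1(\mu)$ convergence). The starting point is the estimate
\[
\bigl|\langle f \rangle^+_{\bar x, r_2} - \langle f \rangle^+_{\bar x, r_1}\bigr| \leq C(d)\|\omega\|_\infty \int_{\setR^d} \indicator_{D^+_{r_1, r_2}}(z)\, |\nabla f(z)|\, z_d^{1-d}\, dz,
\]
together with the observation that on $D^+_{r_1, r_2}$ one has $z_d \eqsim |z - (\bar x, 0)|$ (since $|\bar z - \bar x| \lesssim z_d$ and $z_d \leq |z-(\bar x,0)|$ there). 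Hence the right-hand side is dominated by a truncation of the restricted Riesz potential, namely
\[
C' \int_{C^+(\bar x, 0) \cap \{r_1/2 \leq z_d \leq 2 r_2\}} \frac{|\nabla f(z)|}{|z - (\bar x, 0)|^{d-1}}\, dz,
\]
with the analogous statement on the minus side.

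First I would prove the pointwise existence of $f_\pm(\bar x)$. Fix $\bar x$ with $I_1^{\btimes}[|\nabla f|](\bar x, 0) < \infty$. By absolute continuity of the Lebesgue integral, the one-sided tails $\int_{C^\pm(\bar x, 0) \cap \{|z_d| < \delta\}} |\nabla f(z)|/|z - (\bar x, 0)|^{d-1}\, dz$ tend to zero as $\delta \to 0^+$. Applying the oscillation estimate to $r_1, r_2 \in (0, \delta)$ then shows that $\{\langle f \rangle^+_{\bar x, r}\}$ is Cauchy as $r \to 0^+$, so the limit $f_+(\bar x)$ exists; the argument for $f_-(\bar x)$ is identical on $C^-(\bar x, 0)$.

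Next I would upgrade to $L^1(\mu)$ convergence. Letting $r_1 \to 0^+$ in the oscillation estimate yields, for $\mu$-a.e.\ $\bar x$,
\[
\bigl|\langle f \rangle^+_{\bar x, r} - f_+(\bar x)\bigr| \leq C \int_{C^+(\bar x, 0) \cap \{z_d \leq 2r\}} \frac{|\nabla f(z)|}{|z - (\bar x, 0)|^{d-1}}\, dz =: R_r(\bar x).
\]
The integrand $R_r$ is dominated pointwise by $I_1^+[|\nabla f|\indicator_{(-1,1)^d}](\bar x, 0)$, which by hypothesis (together with the definition of $I_1^{\btimes}$ as the sum of $I_1^\pm$) lies in $L^1(\mu)$, while $R_r(\bar x) \to 0$ pointwise $\mu$-a.e.\ as $r \to 0^+$ by absolute continuity. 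The dominated convergence theorem then gives $\langle f \rangle^+_{\cdot, r} \to f_+$ in $L^1(\mu)$. A short Fubini computation (using $\mu((-1,1)^{d-1}) = 1$ and $f \in L^1_{\loc}$) shows that each average $\langle f \rangle^+_{\cdot, r}$ is itself in $L^1(\mu)$, whence $f_+ \in L^1(\mu)$; the minus side is handled identically. The only mildly technical point, not really an obstacle, is matching the half-angle of the narrow cone $C^+(\bar x, 0)$ with the wider cone implicitly present in $D^+_{r_1, r_2}$, which is routine because both scale linearly in $z_d$, so $z_d^{1-d} \eqsim |z - (\bar x, 0)|^{1-d}$ on the relevant region.
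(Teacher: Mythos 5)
Your proof is correct and follows essentially the same route the paper intends: the paper gives no separate proof of the corollary, leaving it to the oscillation estimate
$|\langle f\rangle^+_{\bar x, r_2} - \langle f\rangle^+_{\bar x, r_1}| \lesssim \int \indicator_{D^+_{r_1,r_2}} |\nabla f|\, z_d^{1-d}\, dz$
derived immediately above, from which absolute continuity yields the Cauchy property (pointwise existence of the traces) and dominated convergence (with $I_1^\pm[|\nabla f|\indicator_{(-1,1)^d}](\cdot,0)\in L^1(\mu)$ as dominating function) yields $L^1(\mu)$ convergence, exactly as you argue. One remark on the point you flag as merely technical: the containment $D^+_{r_1,r_2}\subset C^+(\bar x,0)$ requires the mollifier $\omega$ to have sufficiently small support (e.g.\ $\operatorname{supp}\omega\subset B^d_{1/4}(0)$, as the paper later fixes), since otherwise the wider cone underlying $D^+_{r_1,r_2}$ is not contained in the narrow cone defining $I_1^+$; the comparability $z_d\eqsim|z-(\bar x,0)|$ does not by itself resolve a mismatch of cone apertures, so your handwave is legitimate only under that smallness assumption, which the paper also makes implicitly.
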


We shall use these estimates for $\omega(y) = |B_\tau^d(0)|^{-1}\indicator_{B_\tau^d(x)}(y)$, $x\in B^d_1(0)$.

We also state without proof the following elementary inequality. Let $r_1,r_2>0$, $x,z\in \mathbb{R}^d$.Then
\begin{gather*}
\biggl| \frac{1}{|B^d_{r_1}(x)|}\int\limits_{B^d_{r_1}(x)} f\, dy - \frac{1}{|B^d_{r_2}(z)|}\int\limits_{B^d_{r_1}(z)} f\, dy \biggr| \\
\leq C(d) \frac{|x-z|+|r_1-r_2|}{\min(r_1,r_2)^d} \int\limits_{\mathrm{conv} (B^d_{r_1}(x)\cup B^d_{r_2}(z))}|\nabla f|\, dy.
\end{gather*}

\subsection{Discontinuity of minimizers}\label{ssec:subdis}

In the following statement we show the discontinuity of the minimizer on the set of points comparable in Cantor measure to the whole contact set $\frS=\frC\times \{0\}$, $\frC = \frC^{d-1}_{\lambda,\gamma}$. Define upper and lower traces of the minimizer $(v_\eta)_{+}$ and $(v_\eta)_{-}$ on the contact set $\frS$ as in Section~\ref{ssec:trace} using $\omega(y) = |B_{1/4}^d(0)|^{-1}\indicator_{B_{1/4}^d(0)}(y)$. That is, 
$$
(v_\eta)_{\pm} (\bar x) = \lim_{r\to 0}\langle v_\eta \rangle^{\pm}_{\bar x,r},\quad  \langle v_\eta \rangle^{\pm}_{\bar x,r}=\frac{1}{|B^d_{r/4}(\bar x,\pm r)|}\int\limits_{B^d_{r/4}(\bar x,\pm r)} v_\eta(y)\, dy,
$$
with the convergence taking place for any $\bar x$ where $I_1^{\btimes}[\nabla v_\eta] (\bar x, 0)<\infty$, and in $L^1(\mu)$ provided that $\int I_1^{\btimes}[\nabla v_\eta] (\bar x, 0)\, d\mu<\infty$.

\begin{theorem}\label{T:diff_trace}
Under Assumption~\ref{ass:harmonic2}, for any $N>3$ there exists $\eta>0$ such that
\begin{equation}\label{mu_est}
\begin{gathered}
\mu^{d-1}_{\lambda,\gamma} (\{ |(v_\eta)_{+} - (v_\eta)_{-}|>\eta(1-N^{-1})\}) \geq 1- N^{-1},\\
\int\limits_{\frC} |(v_\eta)_+ (\bar x) - (v_\eta)_+ (\bar x)|\, d\mu^{d-1}_{\lambda,\gamma} (\bar x) \geq \eta(1-N^{-1}).
\end{gathered}
\end{equation}
\end{theorem}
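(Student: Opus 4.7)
The plan is to show that the trace jump of the minimizer $v_\eta$ across $\frS$ is close to $\eta$ by comparing it with the competitor $\eta u$, whose jump is exactly $\eta$, and then controlling the discrepancy through the restricted Riesz potential bound provided by Lemma~\ref{lem:riesz-sublinear}.

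First I would compute the traces of $u$ on $\frC := \frC^{d-1}_{\lambda,\gamma}$. For $\bar x \in \frC$ and any $r > 0$, every $y \in B^d_{r/4}(\bar x, r)$ satisfies $d(\bar y, \frC) \leq |\bar y - \bar x| \leq r/4$ and $y_d \in [3r/4, 5r/4]$, so $d(\bar y, \frC)/|y_d| \leq 1/3 < 2$. Hence $\rho(y) = 1$ and $u(y) = 1/2$ on the whole ball, giving $u_+(\bar x) = 1/2$; analogously $u_-(\bar x) = -1/2$. Setting $w := v_\eta - \eta u \in W^{1,\phi(\cdot)}_0(\Omega)$, the identity $(v_\eta)_+ - (v_\eta)_- - \eta = w_+ - w_-$ reduces the theorem to an $L^1(\mu^{d-1}_{\lambda,\gamma})$ estimate for $|w_+ - w_-|$.

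Next I would bound $w_\pm$ pointwise by restricted Riesz potentials of $|\nabla w|$. Extend $w$ by zero to $\setR^d$; since $w$ is compactly supported in $\overline\Omega$, once $r$ is large enough the ball $B^d_{r/4}(\bar x, \pm r)$ is disjoint from $\Omega$ and $\langle w\rangle^\pm_{\bar x, r} = 0$. The telescoping identity of Section~\ref{ssec:trace} applied between $r_1 \to 0+$ and $r_2 \to \infty$ then yields
\[
|w_\pm(\bar x)| \leq C\, I_1^\pm(|\nabla w|)(\bar x, 0),
\qquad
|w_+ - w_-|(\bar x) \leq C\, I_1^{\btimes}(|\nabla w|)(\bar x, 0).
\]
Integrating against $\mu = \mu^{d-1}_{\lambda,\gamma}$ and splitting $|\nabla w| \leq |\nabla v_\eta| + \eta|\nabla u|$ gives
\[
\int_\frC |(v_\eta)_+ - (v_\eta)_- - \eta|\, d\mu \;\leq\; C\!\int I_1^{\btimes}(|\nabla v_\eta|)\, d\mu \;+\; C\eta\!\int I_1^{\btimes}(|\nabla u|)\, d\mu.
\]

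The second integral on the right vanishes: Lemma~\ref{lem:restricted-riesz} dominates it by $C\eta \int_\Omega |\nabla u|\, b\, dx$, but by Proposition~\ref{pro:est-uAb} and~\eqref{bdef_sub} the supports of $|\nabla u|$ and $b$ lie in the disjoint sets $\{2|x_d| \leq d(\bar x,\frC) \leq 4|x_d|\}$ and $\{d(\bar x,\frC) \leq |x_d|/2\}$, respectively. The first integral is at most $C\kappa\eta$ by Lemma~\ref{lem:riesz-sublinear}, for any prescribed $\kappa > 0$ once $\eta$ is taken small enough (or the parameter in Assumption~\ref{ass:harmonic3} is tuned). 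Markov's inequality applied at threshold $\eta/N$, with $\kappa = \kappa(N)$ chosen sufficiently small, produces the first estimate in~\eqref{mu_est}; the second then follows from $\int_\frC ((v_\eta)_+ - (v_\eta)_-)\, d\mu \geq \eta - C\kappa\eta \geq \eta(1 - N^{-1})$ together with $\mu(\frC) = 1$. The main technical point is the extension-by-zero argument enabling the passage $r_2 \to \infty$ in the telescoping, which crucially uses that $w$ has zero boundary values in $\Omega$ --- in contrast to $\eta u$, which carries the full jump across $\frS$.
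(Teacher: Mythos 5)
Your proof is correct, and it takes a genuinely different route from the paper's. The paper extends $v_\eta$ itself by the constants $\pm\eta/2$ to the region $\{|\bar x|_\infty<|x_d|,\ \pm x_d>1\}$, derives $|(v_\eta)_\pm \mp \eta/2| \lesssim I_1^\pm(|\nabla v_\eta|\indicator_\Omega)(\bar x,0)$, sums by the triangle inequality, and then must invoke the maximum principle to obtain the a.e.\ bound $|(v_\eta)_+ - (v_\eta)_-|\leq\eta$, which it needs to convert the $L^1(\mu)$ lower bound into a measure bound through a two-level splitting argument. You instead decompose $v_\eta = \eta u + w$ with $w\in W^{1,\phidx}_0(\Omega)$, compute explicitly that $u_\pm(\bar x)=\pm\tfrac12$ for every $\bar x\in\frC$ (since the averaging balls lie entirely in $\{\rho=1\}$), extend $w$ by zero, and reduce everything to bounding $|w_+-w_-|$ by $I_1^{\btimes}(|\nabla w|)(\bar x,0)$. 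The split $|\nabla w|\leq|\nabla v_\eta|+\eta|\nabla u|$ then gives a clean finish because $I_1^{\btimes}(|\nabla u|)(\bar x,0)=0$ for $\bar x\in\frC$: the Riesz cone $\{|\bar y-\bar x|\leq|y_d|/2\}$ forces $d(\bar y,\frC)\leq|y_d|/2$, which is disjoint from the support $\{2|x_d|\leq d(\bar x,\frC)\leq 4|x_d|\}$ of $|\nabla u|$ (equivalently, by disjointness of $\operatorname{supp}|\nabla u|$ and $\operatorname{supp} b$). Because you control $|(v_\eta)_+ - (v_\eta)_- -\eta|$ directly in $L^1(\mu)$, Markov's inequality immediately yields both claims in~\eqref{mu_est} without any appeal to the maximum principle, which is a genuine simplification. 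One small caveat: your phrase ``once $\eta$ is taken small enough'' misstates Lemma~\ref{lem:riesz-sublinear} --- $\eta$ is not freely chosen to be small but is the value produced by Assumption~\ref{ass:harmonic2} (and in the concrete models is typically large); your parenthetical fallback to Assumption~\ref{ass:harmonic3} is the accurate formulation. With that wording fixed, the argument is sound and arguably tidier than the paper's.
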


\begin{proof}
Extend the function $v_\eta$ by $\pm \eta$ to $\{|\bar x|_\infty<\abs{x_d},\  \pm x_d>1\}$. For any point $\bar x\in \frC$ with finite $I_1^{\pm}[\nabla v_\eta] (\bar x, 0)$, we have 
$$
|(v_\eta)_\pm (\bar x) \mp \eta/2 | \leq C(d) I_1^{\pm}[|\nabla v_\eta| \indicator_\Omega] (\bar x, 0).
$$
Therefore, 
\begin{equation}\label{diff0}
\int\limits_{\frC} |(v_\eta)_\pm (\bar x) \mp \eta/2 |\, d\mu^{d-1}_{\lambda,\gamma}  (\bar x) \leq C(d) \int\limits_{(-1,1)^{d-1}} I_1^{\pm}[|\nabla v_\eta|\indicator_\Omega] (\bar x, 0)\, d \mu^{d-1}_{\lambda,\gamma}  (\bar x):=C(d) J(\eta).
\end{equation}
By the triangle inequality,
\begin{equation}\label{triangle}
\eta = |\eta/2-(-\eta/2)| \leq |\eta/2-(v_\eta)_{+}(\bar x)| +|(v_\eta)_{+}(\bar x)-(v_\eta)_{-}(\bar x)| +  |(v_\eta)_{-}(\bar x)-(-\eta/2)|  
\end{equation}
Integrating with respect to the Cantor measure and using \eqref{diff0} we get 
\begin{equation}\label{diff1}
\eta \leq 2C(d) J(\eta) + \int\limits_{\frC} |(v_\eta)_+ (\bar x) - (v_\eta)_+ (\bar x)|\, d\mu^{d-1}_{\lambda,\gamma} (\bar x).
\end{equation}
By Assumption~\ref{ass:harmonic2} and Lemma~\ref{lem:riesz-sublinear} (applied to $w_\eta=v_\eta$) for any $\kappa>0$ we can find $\eta>0$ such that 
\begin{equation}\label{diff_fpm}
\int\limits_{\frC} |(v_\eta)_+ (\bar x) - (v_\eta)_+ (\bar x)|\, d\mu^{d-1}_{\lambda,\gamma} (\bar x) \geq \eta- \kappa \eta.
\end{equation}
On the other hand, by the maximum principle 
$$
|(v_\eta)_+ (\bar x) - (v_\eta)_- (\bar x)| \leq \eta.
$$
For any $\alpha \in (0,1)$ we have
\begin{align*}
\int\limits_{\frC} |(v_\eta)_+ (\bar x) - (v_\eta)_- (\bar x)|\, d\mu^{d-1}_{\lambda,\gamma} (\bar x) \\
\leq \eta \cdot\mu^{d-1}_{\lambda,\gamma} (\{ |(v_\eta)_{+} - (v_\eta)_{-}|>\eta \alpha\}) + \alpha \eta \cdot(1-\mu^{d-1}_{\lambda,\gamma} (\{ |(v_\eta)_{+} - (v_\eta)_{-}|>\eta \alpha \})).
\end{align*}
Combining this with the estimate of this integral from below, we obtain
$$
(1-\alpha)\mu^{d-1}_{\lambda,\gamma} (\{ |(v_\eta)_{+} - (v_\eta)_{-}|>\eta\alpha\}) + \alpha \geq 1-\kappa.
$$
Let $\eta$ be such that additionally $\kappa<N^{-2}$. Then taking $\alpha = 1-N \kappa = 1-N^{-1}$ we get
$$
\mu^{d-1}_{\lambda,\gamma} (\{ |(v_\eta)_{+} - (v_\eta)_{-}|>\eta \alpha\}) \geq 1- \frac{\kappa}{1-\alpha} =1 -N^{-1},
$$
which is the required claim.
\end{proof}

Previous theorem can be also stated as follows.
\begin{theorem}\label{T:diff_trace1}
For any $N>3$ there exists $\kappa=\kappa(N,d,\lambda,\gamma)>0$ such that Assumption~\ref{ass:harmonic3} with this $\kappa$ yields \eqref{mu_est}.
\end{theorem}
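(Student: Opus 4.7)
The statement is a quantitative repackaging of Theorem~\ref{T:diff_trace}: that theorem's proof invokes the full strength of Assumption~\ref{ass:harmonic2} only once, when it appeals to Lemma~\ref{lem:riesz-sublinear} to obtain $J(\eta) \leq \kappa\eta$ for an arbitrarily prescribed $\kappa>0$. Crucially, Lemma~\ref{lem:riesz-sublinear} already contains a parallel quantitative version under the weaker Assumption~\ref{ass:harmonic3}, stating that for every $\kappa'>0$ there exists $\kappa = \kappa(\kappa',d,\lambda,\gamma)>0$ such that Assumption~\ref{ass:harmonic3} with this $\kappa$ forces $J(\eta) \leq \kappa'\eta$ for the $\eta$ furnished by that assumption. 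The plan is therefore to re-execute the proof of Theorem~\ref{T:diff_trace} verbatim, but choose $\kappa'$ (and hence $\kappa$) as a function of $N$ only.

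Concretely, I would again extend $v_\eta$ by $\pm \eta/2$ across $\{\pm x_d>1\}\cap\{|\bar x|_\infty<|x_d|\}$, bound each one-sided trace at $(\bar x,0)$ for $\bar x\in\frC$ by the restricted Riesz potential of $|\nabla v_\eta|\indicator_\Omega$, and combine the triangle inequality $\eta = |\eta/2 - (-\eta/2)|$ with integration against $\mu^{d-1}_{\lambda,\gamma}$ exactly as in \eqref{diff0}--\eqref{diff1}, obtaining
$$
\eta \;\leq\; 2C(d)\, J(\eta) \;+\; \int_{\frC} \bigl|(v_\eta)_+ - (v_\eta)_-\bigr|\, d\mu^{d-1}_{\lambda,\gamma}.
$$
Fix $\kappa'>0$ so small that $2C(d)\kappa'<N^{-2}$ and let $\kappa=\kappa(N,d,\lambda,\gamma)$ be the parameter produced by Lemma~\ref{lem:riesz-sublinear} for this $\kappa'$. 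Under Assumption~\ref{ass:harmonic3} with this $\kappa$ one gets $J(\eta) \leq \kappa'\eta$, and hence the integral inequality in the strengthened form
$$
\int_{\frC} \bigl|(v_\eta)_+ - (v_\eta)_-\bigr|\, d\mu^{d-1}_{\lambda,\gamma} \;\geq\; \bigl(1-2C(d)\kappa'\bigr)\eta \;\geq\; (1-N^{-2})\eta,
$$
which already implies the first inequality of \eqref{mu_est}.

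The $\mu$-measure bound then follows as in the final step of Theorem~\ref{T:diff_trace}: the maximum principle gives the pointwise bound $|(v_\eta)_+ - (v_\eta)_-| \leq \eta$, and splitting the integral with $\alpha = 1-N^{-1}$ yields
$$
(1-N^{-2})\eta \;\leq\; \eta\,\mu^{d-1}_{\lambda,\gamma}\bigl(A\bigr) + \alpha\eta\bigl(1-\mu^{d-1}_{\lambda,\gamma}(A)\bigr),\qquad A:=\bigl\{|(v_\eta)_+ - (v_\eta)_-|>\alpha\eta\bigr\},
$$
which rearranges to $\mu^{d-1}_{\lambda,\gamma}(A) \geq (1-N^{-2}-\alpha)/(1-\alpha) = 1-N^{-1}$. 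There is no new analytic ingredient here; the only thing that requires attention is confirming that the constant $C(d)$ appearing in the trace-by-Riesz-potential estimate and the constant $C(d,\lambda,\gamma)$ of Lemma~\ref{lem:restricted-riesz} depend only on $d,\lambda,\gamma$, so that the resulting $\kappa$ indeed has the advertised form $\kappa(N,d,\lambda,\gamma)$ and no hidden dependence on $\eta$ or on the particular minimizer $v_\eta$.
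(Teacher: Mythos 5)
Your proposal is correct and takes essentially the same route the paper intends. The paper states Theorem~\ref{T:diff_trace1} without a separate proof, introducing it merely as a restatement of Theorem~\ref{T:diff_trace}; you have supplied exactly the details the authors leave implicit: that the only place Assumption~\ref{ass:harmonic2} enters the proof of Theorem~\ref{T:diff_trace} is through Lemma~\ref{lem:riesz-sublinear}, and that lemma already carries a parallel quantitative clause under Assumption~\ref{ass:harmonic3} (via Lemma~\ref{lem:riesz-sublinear01}), with $\kappa$ depending on $\kappa'$, $d$, $\lambda$, $\gamma$ only. Your bookkeeping of constants, choice $2C(d)\kappa'<N^{-2}$, and the splitting with $\alpha=1-N^{-1}$ reproduce the paper's estimate and confirm the advertised dependence $\kappa=\kappa(N,d,\lambda,\gamma)$.
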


\subsection{Absence of higher gradient integrability}

One can derive the absence of the higher gradient integrability from the usual characterization of the set of singular point of a Sobolev function: if $W^{1,s}(\Omega)$, $1\leq s <n$, then the Hausdorff dimension of set of its non-Lebesgue points is at most $n-s$.

However we prefer to give a direct proof since the similar argument will be also applied in the superdimensional case.

Let $\frC = \frC^{d-1}_{\lambda,\gamma}$. For $h>0$ by $\frC_h$ we denote the $h$-neighbourhood of $\frC$ in $(-1,1)^{d-1}$ and $C_h = \frC_{h/2}\times (-2h,2h)$. By Lemma~\ref{lem:cantor-estimates} (see \eqref{itm:cantor-estimates2}) the Lebesgue measure of $C_h$ goes to zero as $h\to 0$.

Recall that in the subdimensional case we denote $\nu=\frD$ if $\frD>0$ and $\nu = d-1$ if $\frD=0$.
\begin{theorem}\label{T:absenceHI0}
Let $\Psi$ be a generalized Orlicz function. If 
$$
G(\Psi,h):= \int\limits_{C_h} \Psi^* (x,h^{1-d+\frD} \log^{-\gamma \nu} (1/h))\, dx \to 0
$$
as $h\to 0$ then under Assumption~\ref{ass:harmonic2} (or Assumption~\ref{ass:harmonic3} with sufficiently small $\kappa$) there holds
$$
\int\limits_{\Omega} \Psi (x,|\nabla v_\eta|)=\infty
$$ 
for some $\eta>0$.
\end{theorem}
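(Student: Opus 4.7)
The plan is to argue by contradiction. Assume $\int_\Omega \Psi(x,|\nabla v_\eta|)\,dx = A < \infty$. Fix some $N > 3$ and choose $\eta>0$ so that Theorem~\ref{T:diff_trace} (or Theorem~\ref{T:diff_trace1} with a small enough $\kappa$) yields
$$\int_{\frC}|(v_\eta)_+ - (v_\eta)_-|\,d\mu^{d-1}_{\lambda,\gamma}\;\geq\; \eta(1-N^{-1}),$$
where $\frC := \frC^{d-1}_{\lambda,\gamma}$. The upper/lower traces $(v_\eta)_{\pm}(\bar x)$ on $\frS=\frC\times\{0\}$ are defined as the $r\to 0^{\pm}$ limits of the averages $\langle v_\eta\rangle^{\pm}_{\bar x,r}$ over $B^d_{r/4}(\bar x,\pm r)$. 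At scale $h>0$, insert the intermediate averages $\langle v_\eta\rangle^{\pm}_{\bar x,h}$ into the triangle inequality and integrate $\bar x$ against $\mu^{d-1}_{\lambda,\gamma}$, so that the lower bound on the trace jump splits into three integrals.

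For the two ``outer'' integrals $\int_{\frC}|(v_\eta)_{\pm}-\langle v_\eta\rangle^{\pm}_{\bar x,h}|\,d\mu^{d-1}_{\lambda,\gamma}$, apply the trace-representation formula of Section~\ref{ssec:trace} between $r_1\to 0^{\pm}$ and $r_2=\pm h$; this bounds each by $C I_{1,h}^{\pm}(|\nabla v_\eta|\indicator_\Omega)(\bar x,0)$, the restricted Riesz potential additionally truncated to $|z_d|\leq 2h$. Repeating the proof of Lemma~\ref{lem:restricted-riesz} with this extra restriction gives
$$\int_{\frC}\bigl(|(v_\eta)_+-\langle v_\eta\rangle^+_{\bar x,h}|+|(v_\eta)_--\langle v_\eta\rangle^-_{\bar x,h}|\bigr)\,d\mu^{d-1}_{\lambda,\gamma} \;\leq\; C\int_{\Omega\cap\{|y_d|\leq 2h\}}|\nabla v_\eta|\,b\,dy,$$
which tends to $0$ as $h\to 0^+$ by dominated convergence, since $|\nabla v_\eta|\,b\in L^1(\Omega)$ (Young's inequality applied to $\phi,\phi^*$ together with $\mathcal{F}(v_\eta)\leq\mathcal{F}(\eta u)$ and \eqref{MC1}; cf.~Lemma~\ref{lem:riesz-sublinear00}).

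For the ``middle'' integral, apply the elementary average-difference inequality at the end of Section~\ref{ssec:trace} with $x=(\bar x,h)$, $z=(\bar x,-h)$, $r_1=r_2=h/4$: this yields $|\langle v_\eta\rangle^+_{\bar x,h} - \langle v_\eta\rangle^-_{\bar x,h}| \leq Ch^{1-d}\int_{|\bar y-\bar x|\leq h/4,\,|y_d|\leq 2h}|\nabla v_\eta|\,dy$. Integrating in $\bar x$ against $\mu^{d-1}_{\lambda,\gamma}$, Fubini's theorem together with the Cantor-measure estimate \eqref{itm:cantor-estimates1} (which supplies the factor $h^\frD\log^{-\gamma\nu}(1/h)$ and the support $C_h$) produces
$$\int_{\frC}|\langle v_\eta\rangle^+_{\bar x,h}-\langle v_\eta\rangle^-_{\bar x,h}|\,d\mu^{d-1}_{\lambda,\gamma} \;\leq\; C\int_\Omega |\nabla v_\eta|\,f_h\,dy, \qquad f_h := h^{1-d+\frD}\log^{-\gamma\nu}(1/h)\,\indicator_{C_h}.$$
Combining the three bounds, for all sufficiently small $h>0$ one obtains $\tfrac{1}{2}\eta \leq C\int_\Omega |\nabla v_\eta|\,f_h\,dy$.

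To close the contradiction, apply Young's inequality $s|\nabla v_\eta|\,f_h\leq \Psi(x,|\nabla v_\eta|)+\Psi^*(x,s f_h)$ for every $s>0$ and integrate:
$$s\cdot\tfrac{\eta}{2C}\;\leq\;s\int_\Omega |\nabla v_\eta|\,f_h\,dy \;\leq\; A + \int_\Omega \Psi^*(x,s f_h)\,dx.$$
The $\Delta_2$-condition for $\Psi^*$ (equivalent to $\nabla_2$ of $\Psi$) gives $\Psi^*(x,s f_h)\leq C_s \Psi^*(x,f_h)$, so the last integral is at most $C_s G(\Psi,h)\to 0$ as $h\to 0$. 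Letting first $h\to 0^+$ and then $s\to\infty$ yields the contradiction. The principal technical obstacle is the truncation step in Lemma~\ref{lem:restricted-riesz}: one must route the restriction $|z_d|\leq 2h$ through the Fubini-based kernel estimate so that the Cantor-measure bound ultimately produces exactly the weight $f_h$, matching the hypothesis $G(\Psi,h)\to 0$; everything else is a careful assembly of tools already proved in the paper.
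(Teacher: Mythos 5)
Your overall architecture matches the paper's: pass through the $h$-scale averages $\langle v_\eta\rangle^{\pm}_{\bar x,h}$, deduce from the trace jump of Theorem~\ref{T:diff_trace} that $\int|\langle v_\eta\rangle^+_{\bar x,h}-\langle v_\eta\rangle^-_{\bar x,h}|\,d\mu\geq\tfrac{\eta}{2}$ for all small $h$, then dominate the middle term by $C\int_\Omega|\nabla v_\eta|\,f_h\,dy$ with $f_h=h^{1-d+\frD}\log^{-\gamma\nu}(1/h)\indicator_{C_h}$ and apply Young's inequality. The paper compresses the first part by simply citing Corollary~\ref{corr_conv} (which already asserts $L^1(\mu)$ convergence of $\langle v_\eta\rangle^{\pm}_{\bar x,h}$ to the traces once $\int I_1^{\btimes}|\nabla v_\eta|\,d\mu<\infty$, i.e.\ once $J(\eta)<\infty$), whereas you unpack that convergence by a fresh dominated-convergence argument on $\int_{\{|y_d|\le 2h\}}|\nabla v_\eta|\,b\,dy$; either route is fine.

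The genuine gap is in your final step. You introduce a free parameter $s$, get $s\tfrac{\eta}{2C}\le A+\int_\Omega\Psi^*(x,sf_h)\,dx$, and then invoke a $\Delta_2$-condition for $\Psi^*$ (equivalently $\nabla_2$ for $\Psi$) to reduce to $G(\Psi,h)$. But Theorem~\ref{T:absenceHI0} assumes only that $\Psi$ is a generalized Orlicz function; no $\Delta_2$/$\nabla_2$ condition on $\Psi$ is given, so the step $\Psi^*(x,sf_h)\le C_s\Psi^*(x,f_h)$ is not justified. This detour is also unnecessary: since $f_h$ is supported in $C_h$, Young's inequality with $s=1$ already gives
$$\tfrac{\eta}{2C}\;\le\;\int_{C_h}\Psi(x,|\nabla v_\eta|)\,dx+\int_{C_h}\Psi^*(x,f_h)\,dx,$$
and the \emph{first} term tends to $0$ by absolute continuity of the integral, because $|C_h|\to 0$ (Lemma~\ref{lem:cantor-estimates}, \eqref{itm:cantor-estimates2}) and $\Psi(\cdot,|\nabla v_\eta|)\in L^1(\Omega)$ under your contradiction hypothesis. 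The second term is exactly $G(\Psi,h)\to 0$. This yields the contradiction directly, without any growth assumption on $\Psi$, and is the paper's argument. So your proof is correct once the $s$-scaling is replaced by this observation; as written it proves a weaker theorem requiring $\nabla_2$ of $\Psi$.
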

\begin{proof}

By Theorem~\ref{T:diff_trace} and Corollary~\ref{corr_conv}, there exists $\eta>0$ such that for sufficiently small $h$ there holds 
$$
\int |\langle v_\eta\rangle^+_{\bar x,h}-\langle v_\eta\rangle^-_{\bar x,h}| \, d\mu^{d-1}_{\lambda,\gamma}(\bar x)  \geq \frac{\eta}{2}.
$$
There holds
$$
|\langle v_\eta\rangle^+_{\bar x,h}-\langle v_\eta\rangle^-_{\bar x,h}| \leq C(d)h^{1-d} \int\limits_{B^{d-1}_{h/2}(\bar x) \times (-2h,2h)} |\nabla v_\eta|\, dy.
$$
Integrating this against the Cantor measure corresponding to the given Cantor set $\frC =\frC^{d-1}_{\lambda,\gamma}$ we get 
$$
C(d)\int\limits_{|y_d|\leq 2h} |\nabla v_\eta(y)| h^{1-d}\mu^{d-1}_{\lambda,\gamma}(B^{d-1}_{h/2}(\bar y))\, dy \geq \frac{\eta}{2}.
$$

Assume that $|\nabla v_\eta| \in L^{\Psi}(\Omega)$. By the generalized Young inequality, and using estimates of Lemma~\ref{lem:cantor-estimates}, \eqref{itm:cantor-estimates1}, we get
\begin{align*}
\int\limits_{|y_d|\leq 2h} |\nabla v_\eta(y)| h^{1-d}\mu^{d-1}_{\lambda,\gamma}(B^{d-1}_{h/2}(\bar y))\, dy  \\
\lesssim \int\limits_{C_h} \Psi(y,|\nabla v_\eta|)\, dy + \int\limits_{C_h} \Psi^* (y,h^{1-d+\frD} \log^{-\gamma \nu} (1/h))\, dy.
\end{align*}
This gives a contradiction as $h\to 0$.
\end{proof}

\begin{corollary}[Lack of Meyers property]\label{corr_integr0}
If $p_0<d$ or $p_0=d-0$, then under Assumption~\ref{ass:harmonic2} for some $\eta>0$  there holds 
\begin{align*}
 \int\limits_{\Omega}\abs{\nabla v_\eta}^{p_0}\log^{\delta}(e+\abs{\nabla v_\eta} ) dx=\infty
\end{align*}
for any~$\delta>-\gamma \nu$.
In particular $|\nabla v_\eta|\notin L^{s}(\Omega)$ for any $s>p_{0}$. The same holds under Assumption~\ref{ass:harmonic3} with sufficiently small $\kappa>0$.
\end{corollary}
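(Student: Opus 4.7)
The plan is to apply Theorem~\ref{T:absenceHI0} with the $x$-independent Orlicz function $\Psi(t) := t^{p_0}\log^\delta(e+t)$, suitably modified near $t=0$ so that it and its conjugate form a legitimate $\Delta_2 \cap \nabla_2$ pair. This regularisation is cosmetic, since both sides of the assertion only see the behaviour at large values of $t$. The conjugate then satisfies $\Psi^*(s) \asymp s^{p_0'}\log^{-\delta/(p_0-1)}(e+s)$ for large $s$, by the standard computation for modular functions with slowly varying logarithmic factors.

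The core of the argument is to verify that the quantity $G(\Psi,h)$ appearing in Theorem~\ref{T:absenceHI0} tends to $0$. I would substitute $s_h := h^{1-d+\frD}\log^{-\gamma\nu}(1/h)$ into the above estimate for $\Psi^*$. Since $p_0 = d-\frD$, so that $(1-p_0)p_0' = -p_0$, this yields
\[
\Psi^*(s_h) \lesssim h^{-p_0}\log^{-(\gamma\nu p_0+\delta)/(p_0-1)}(1/h).
\]
For the Lebesgue measure of the tubular neighbourhood $C_h = \frC_{h/2}\times(-2h,2h)$, Lemma~\ref{lem:cantor-estimates}\eqref{itm:cantor-estimates2} gives $|C_h| \lesssim h^{p_0}\log^{\gamma\nu}(1/h)$, uniformly in the two cases $\frD>0$ (with $\nu=\frD$) and $\frD=0$, $p_0 = d-0$ (with $\nu=d-1$). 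Multiplying the two estimates yields
\[
G(\Psi,h) \lesssim \log^{-(\gamma\nu + \delta)/(p_0-1)}(1/h),
\]
which tends to $0$ precisely when $\delta > -\gamma\nu$, the standing hypothesis. Theorem~\ref{T:absenceHI0} then delivers $\int_\Omega \Psi(|\nabla v_\eta|)\,dx = \infty$ for some $\eta > 0$, which is the main claim.

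The ``in particular'' assertion follows by contraposition: if $|\nabla v_\eta| \in L^s(\Omega)$ for some $s > p_0$, then $t^{p_0}\log^\delta(e+t) \leq C_\delta(1+t^s)$ pointwise, whence $\int_\Omega \Psi(|\nabla v_\eta|)\,dx < \infty$ for every $\delta \in \mathbb{R}$, contradicting what was just proved. The case of Assumption~\ref{ass:harmonic3} with sufficiently small $\kappa$ requires no change, as Theorem~\ref{T:absenceHI0} is stated under both hypotheses. If any obstacle arises it will be in the regularisation of $\Psi$ near the origin so that the conjugation formula and the Young-type inequality used inside Theorem~\ref{T:absenceHI0} are strictly applicable; this is a standard adjustment that does not alter the logarithmic asymptotics driving $G(\Psi,h) \to 0$.
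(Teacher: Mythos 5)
Your proposal reproduces the paper's own argument: choose $\Psi(t)=t^{p_0}\log^\delta(e+t)$, compute $\Psi^*(s)\lesssim s^{p_0'}\log^{\delta/(1-p_0)}(e+s)$, substitute $s_h=h^{1-d+\frD}\log^{-\gamma\nu}(1/h)$ into $G(\Psi,h)$, use $|C_h|\lesssim h^{d-\frD}\log^{\gamma\nu}(1/h)$ from Lemma~\ref{lem:cantor-estimates}, and observe that the $h$-powers cancel because $(1-d+\frD)p_0'=-p_0=-(d-\frD)$, leaving $G(\Psi,h)\lesssim\log^{(\gamma\nu+\delta)/(1-p_0)}(1/h)\to 0$ precisely when $\delta>-\gamma\nu$. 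This is essentially identical to the paper's proof, and the closing contraposition argument for $L^s$ with $s>p_0$ is a standard consequence that the paper leaves implicit.
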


\begin{proof}
Let  $\Psi(\tau)=\tau^{p_0}\log^\delta(e+\tau)$. Then
\begin{align*}
 \Psi^*(\tau)\lesssim \tau^{p_{0}'} \log^{\frac{\delta}{1-p_0}}(e+\tau). 
\end{align*}

By estimates of Lemma~\ref{lem:cantor-estimates} we have 
\begin{align*}
 \abs{C_h}\lesssim h^{d-\frD}\log^{\gamma\nu} (1/h),
\end{align*}
then
\begin{align*}
G(\Psi,h)& \lesssim \int\limits_{C_h} (h^{1-d+\frD}\log^{-\gamma \nu} (1/h)  )^{p_{0}'} \log^{\frac{\delta}{1-p_0}}(1/h)\, dy\\
&\lesssim \log^{\frac{\gamma\nu+\delta}{1-p_0}}(1/h) \to 0
\end{align*}
as $h\to 0$. 
\end{proof}

\section{Subdimensional models}\label{sec:sub_mod}

Recall that in the subdimensional case we set $p_0=d-\frD \leq d$, $\frD = (d-1)\frac{\log 2}{\log\lambda^{-1}}$ if $\lambda,\frD>0$ and $\lambda=0$ if $\frD=0$, for the competitor $u$ there holds 
$$
|\nabla u| \leq |x_d|^{-1}\indicator_\set{2|x_d|\leq d(\bar x,\frC^{d-1}_{\lambda,\gamma})\leq 4 |x_d|},
$$
 and the auxiliary function is defined by
\begin{equation*}
b= \indicator_{ \set{ 
          d(\bar{x},\frC^{d-1}_{\lambda,\gamma}) \leq \frac 12 \abs{x_d}}}\abs{x_d}^{\frD+1-d} \log^{-\gamma\nu} (e+\abs{x_d}^{-1}),
\end{equation*} 
where $\nu = \frD$ if $\frD>0$ and $\nu = d-1$ if $\lambda=\frD=0$.

The function $v_\eta$,  $\eta>0$, is a minimizer of the variational problem \eqref{vtdef}. Recall that $\eta$ here is the boundary value parameter: $v_\eta - \eta u \in W_0^{1,\varphi(\cdot)}(\Omega)$, and by $\Sigma'_\eta$ we denote the set of non-Lebesgue point of $v_\eta$ which lie on $\frS = \frC^{d-1}_{\lambda,\gamma}\times \{0\}$.

In this section we shall use the function $\rho_a \in C^\infty(\setR^d \setminus \frS)$  defined using Lemma~\ref{lem:smooth-indicator} which satisfies
    \begin{itemize}
    \item
      $\indicator_{\set{d(\bar{x},\frC^{d-1}_\lambda) \leq \frac 12 \abs{x_d}}}
      \leq \rho_a \leq \indicator_{\set{d(\bar{x},\frC^{d-1}_\lambda)
          \leq 2 \abs{x_d}}}$.
    \item
      $\abs{\nabla \rho_a} \lesssim \abs{x_d}^{-1}
      \indicator_{\set{\frac 12 \abs{x_d} \leq d(\bar{x},\frC^{d-1}_\lambda) \leq
          2 \abs{x_d}}}$.
    \end{itemize}

\subsection{Piecewise constant variable exponent}

Let $p_{-}\leq p_0 < p_{+}$. Define
$$
      p(x) :=
             \begin{cases}
               p^- &\quad \text{for}\quad \abs{x_d} \leq d(\bar{x},\frC^{d-1}_\lambda),
               \\
               p^+ &\quad \text{for}\quad \abs{x_d} > d(\bar{x},\frC^{d-1}_\lambda).
             \end{cases},
$$
and
\begin{equation}\label{varexp}
             \varphi(x,t):= \frac{t^{p(x)}}{p(x)},\quad \mathcal{F}(w) = \int\limits_\Omega \frac{|\nabla w|^{p(x)}}{p(x)}\, dx.
    \end{equation}

\begin{proposition}\label{prop:px_subd}
Assumption~\ref{ass:harmonic2} is satisfied if $p_{-}<p_0$  or $p_{-}=p_0$ and additionally $\gamma \nu < -1$.
\end{proposition}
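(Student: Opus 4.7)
The plan is to verify the two parts of Assumption~\ref{ass:harmonic2} by exploiting the decisive feature of the piecewise constant exponent: $p(x)$ is constant on each of the supports of $\nabla u$ and of $b$. Indeed, on $\mathrm{supp}\,\nabla u$ one has $2|x_d|\le d(\bar x,\frC^{d-1}_{\lambda,\gamma})\le 4|x_d|$, so $|x_d|\le d(\bar x,\frC^{d-1}_{\lambda,\gamma})/2<d(\bar x,\frC^{d-1}_{\lambda,\gamma})$ and hence $p(x)=p_-$, while on $\mathrm{supp}\, b$ one has $d(\bar x,\frC^{d-1}_{\lambda,\gamma})\le |x_d|/2<|x_d|$ and hence $p(x)=p_+$. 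Since $\phi^*(x,t)=t^{p'(x)}/p'(x)$, the integrals are exactly homogeneous: for every $\eta,s>0$,
$$
\mathcal{F}(\eta u)=C_1\,\eta^{p_-},\qquad \mathcal{F}^*(s b)=C_2\, s^{(p_+)'},
$$
with $C_1:=(p_-)^{-1}\int_\Omega|\nabla u|^{p_-}\,dx$ and $C_2:=((p_+)')^{-1}\int_\Omega b^{(p_+)'}\,dx$.

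Next I would check $C_1,C_2<\infty$, which yields \eqref{MC1}. For $C_1$, if $p_-<p_0$ then Lemma~\ref{L:u} (first bullet) gives $\nabla u\in L^{p_0-\varepsilon}$ for every $\varepsilon>0$ and in particular $\nabla u\in L^{p_-}$; in the borderline case $p_-=p_0$, the extra hypothesis $\gamma\nu<-1$ matches exactly the threshold in the third bullet of Lemma~\ref{L:u} ($\gamma<-1/\frD$ if $\frD>0$, or $\gamma<-1/(d-1)$ if $\frD=0$) and yields $\nabla u\in L^{p_0}$. For $C_2$, the strict gap $p_+>p_0$, built into the setup $p_-\le p_0<p_+$, gives $(p_+)'<p_0'$, so $b\in L^{(p_+)'}$ by Lemma~\ref{L:b}.

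Finally, to verify \eqref{MC2} I would exploit the strict gap $p_-<p_+$, equivalently $(p_--1)((p_+)'-1)<1$. Given $\kappa>0$, set $s:=(2C_1/\kappa)\,\eta^{p_--1}$; this choice balances the first term so that $C_1\eta^{p_-}=\tfrac{\kappa}{2}\,\eta s$. The remaining requirement $C_2 s^{(p_+)'}<\tfrac{\kappa}{2}\,\eta s$ then reduces to
$$
\eta^{(p_--1)((p_+)'-1)-1}<\frac{\kappa}{2C_2}\bigl(2C_1/\kappa\bigr)^{1-(p_+)'}.
$$
Since the exponent of $\eta$ on the left is $(p_--p_+)/(p_+-1)<0$, the inequality holds for all sufficiently large $\eta$, completing the verification. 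The only delicate point is the borderline case $p_-=p_0$, where one must invoke the precise log-threshold from Lemma~\ref{L:u}; everything else is elementary algebra once the homogeneity of $\mathcal{F}$ and $\mathcal{F}^*$ on the respective supports has been recognized.
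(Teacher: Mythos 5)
Your proof is correct and follows essentially the same route as the paper: you exploit the exact homogeneity of $\mathcal{F}$ and $\mathcal{F}^*$ on the disjoint supports of $\nabla u$ and $b$ (where the exponent is identically $p_-$ and $p_+$, respectively), invoke Lemmas~\ref{L:u} and~\ref{L:b} for \eqref{MC1}, and then balance $s$ against $\eta$ using the strict gap $p_-<p_+$. The only cosmetic difference is the choice of $s$: the paper takes $s=\eta^{p_1-1}$ with $p_1=(p_-+p_+)/2$ so that $\eta s=\eta^{p_1}=s^{p_1'}$ and bounds both terms symmetrically, whereas you set $s$ to make the first term exactly $\tfrac{\kappa}{2}\eta s$ and then check that the remaining inequality degenerates in $\eta$; both reduce to the same fact $(p_--1)(p_+'-1)<1$.
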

\begin{proof}

From Lemmas~\ref{L:u}, \ref{L:b} we get property \eqref{MC1}. It remains to check \eqref{MC2}.

We have
$$
      \mathcal{F}(\eta u) = \eta^{p^-} \mathcal{F}(u),\quad 
      \mathcal{F}^*(s b) = s^{(p^+)'} \mathcal{F}^*(b).
$$
Now, fix $s:= \eta^{p_1-1}$, $p_1 = (p_{-}+p_{+})/2$. Then for suitable large~$t$ (and
  therefore large~$s$) we obtain
  \begin{align*}
    \mathcal{F}(\eta u) +
    \mathcal{F}^*(sb)
    &< \tfrac{1}{p_0} \eta^{p_1} \cdot \kappa +
                \tfrac{1}{p_1'} s^{p_1'} \kappa = \kappa \eta s,
  \end{align*}
  where we have used~$p_{-}\leq p_1$ and $p_+' < p_1'$.  This proves
  Assumption~\ref{ass:harmonic2} for the piecewise constant variable exponent model.
\end{proof}

\begin{theorem}\label{T:varexp_sub}
Let $1<p_{-}<p_{+}<\infty$, $p_{-}\leq d$. Let $\frD = d-p_{-}$ and define $\lambda=2^{(1-d)/\frD}$ if $\frD>0$, $\lambda=0$ if $\frD=0$. Let $\nu=\frD$ if $p_{-}<d$ and $\nu=d-1$ if $p_{-}=d$. For any $\gamma<-1/\nu$ there exists a variable exponent $p(\cdot):\Omega\to [p^-,p^+]$,~$\eta>0$ and a closed subset ~$\Sigma$ of the set  of non-Lebesgue points of the minimizer~$v_\eta$ such that: 
\begin{itemize}
\item $\mu_{\lambda,\gamma} (\Sigma) \geq 1/2$; 
\item $\mathrm{dim}_\mathcal{H} \Sigma =d-p_{-}$;
\item $v_\eta \notin W^{1,s}(\Omega)$ for any $s>p_{-}$.
\end{itemize}
\end{theorem}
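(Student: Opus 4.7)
My plan is to instantiate the subdimensional framework of Sections~\ref{sec:sub}--\ref{sec:sub_mod} with $p_0 := p_{-}$. Using Definition~\ref{def:fractal-examples} I would build the competitor $u$ and auxiliary function $b$ associated to the Cantor set $\frC^{d-1}_{\lambda,\gamma}$ (in the degenerate case $p_{-} = d$ one uses the $p_0 = d-0$ variant, with $\lambda = \frD = 0$ and the meager Cantor set), and take the piecewise constant variable exponent
\[
 p(x) := p^{-}\,\indicator_{\{|x_d|\le d(\bar x,\frC^{d-1}_{\lambda,\gamma})\}} + p^{+}\,\indicator_{\{|x_d|> d(\bar x,\frC^{d-1}_{\lambda,\gamma})\}},
\]
so that $p_{-}$ is active on the cones that carry $\nabla u$ while $p_{+}$ is active on the cones where $b$ is supported.

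The decisive step is verifying Assumption~\ref{ass:harmonic2} for this pair $(u,b)$; this is exactly Proposition~\ref{prop:px_subd}. Since we are in the borderline case $p^{-} = p_{0}$, the proposition requires $\gamma\nu < -1$, which is precisely the hypothesis $\gamma < -1/\nu$. Finiteness of $\mathcal{F}(u)$ and $\mathcal{F}^{*}(b)$ then follows from Lemmas~\ref{L:u} and~\ref{L:b} (the borderline condition $\gamma\nu<-1$ is exactly what forces $\nabla u\in L^{p_{-}}$), and the scaling bound $\mathcal{F}(\eta u)+\mathcal{F}^{*}(sb)<\kappa\eta s$ is obtained from the homogeneities $\mathcal{F}(\eta u)=\eta^{p^{-}}\mathcal{F}(u)$ and $\mathcal{F}^{*}(sb)=s^{(p^{+})'}\mathcal{F}^{*}(b)$ by choosing $s=\eta^{p_{1}-1}$ with $p_{1}=(p^{-}+p^{+})/2$, crucially using the strict gap $p^{-}<p^{+}$.

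With Assumption~\ref{ass:harmonic2} in hand, the three conclusions of the theorem are assembled from the conditional results. Theorem~\ref{T:condB} produces an $\eta>0$ with $\mu^{d-1}_{\lambda,\gamma}(\Sigma'_{\eta})>1/2$; inner regularity of the Cantor measure then yields a closed subset $\Sigma\subseteq\Sigma'_{\eta}$ of Cantor measure at least $1/2$. The ball upper bound in Lemma~\ref{lem:cantor-estimates}\eqref{itm:cantor-estimates1} combined with Frostman's lemma (as in Corollary~\ref{cor:regul}) gives $\dim_{\mathcal{H}}\Sigma\ge \frD = d-p_{-}$; the matching upper bound is the classical dimension estimate for the non-Lebesgue set of a $W^{1,p_{-}}$-function, which applies because $t^{p_{-}}\lesssim 1+t^{p(x)}$ and $\mathcal{F}(v_{\eta})\le\mathcal{F}(\eta u)<\infty$ give $v_{\eta}\in W^{1,p_{-}}(\Omega)$. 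Finally, the absence of higher integrability $v_{\eta}\notin W^{1,s}$ for any $s>p_{-}$ is immediate from Corollary~\ref{corr_integr0}.

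The main obstacle is the borderline verification of Assumption~\ref{ass:harmonic2}; the piecewise-constant structure of $p(\cdot)$ makes the scaling transparent, so the only real tension is whether $u$ and $b$ are integrable enough, and the hypothesis $\gamma<-1/\nu$ is sharp for exactly this. The degenerate endpoint $p_{-}=d$ requires separate but routine attention, using $\nu=d-1$ and the meager Cantor set, whereupon $\Sigma$ has zero Hausdorff dimension but cardinality of the continuum.
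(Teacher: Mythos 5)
Your proof takes essentially the same route as the paper's: set $p_0 = p_-$, verify Assumption~\ref{ass:harmonic2} via Proposition~\ref{prop:px_subd} (noting the borderline requirement $\gamma\nu < -1$ coincides with the hypothesis $\gamma < -1/\nu$), and then invoke Theorem~\ref{T:condB}, Corollary~\ref{cor:regul}, and Corollary~\ref{corr_integr0} (equivalently Theorem~\ref{T:condA}) for the three conclusions. The paper's proof is exactly this one-line assembly; your additional remarks on inner regularity of the Cantor measure, the upper dimension bound via $v_\eta \in W^{1,p_-}(\Omega)$, and the meager-Cantor treatment of the endpoint $p_- = d$ correctly unpack the steps the paper leaves implicit.
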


\begin{proof}
The proof follows from Proposition~\ref{prop:px_subd} and Theorems~\ref{T:condB}, \ref{T:condA} and Corollary~\ref{cor:regul} by setting $p_0=p_{-}$.
\end{proof}

\subsection{Continuous variable exponent}

Let $\varkappa>0$ and
\begin{equation}\label{sigma_def}
\sigma(t) = \varkappa \frac{\log\log (e^3+t^{-1})}{\log (e+t^{-1})}.
\end{equation}
Let $\rho_a$ be the function defined in the beginning of this section. Let $\xi\in C_0^\infty(\Omega)$, $0\leq \xi \leq 1$, $\xi=1$ in a neighborhood of $\frC^{d-1}_{\lambda,\gamma}\times\{0\}$, $\xi(x)=0$ if $\sigma(|x_d|)>(p_0-1)/10$.   Define the continuous variable exponent model as 
    \begin{alignat*}{2}
      &p^-(x) &&:= p_0-\sigma(\abs{x_d}),
      \\
      &p^+(x) &&:= p_0+\sigma(\abs{x_d}),
      \\
      &p(x) &&:= \xi(x) p^-(x)\,(1-\rho_a)(x)  + \xi(x) p^+(x)\rho_a(x) + (1-\xi(x))p_0.
    \end{alignat*}  
The integrand $\varphi$ and the functional are defined in \eqref{varexp}.

\begin{proposition}\label{prop:cve0}
Let $\varkappa >p_0/2$. Then one can choose the parameter $\gamma$ of the Cantor set $\frC^{d-1}_{\lambda,\gamma}$ so that the continuous variable exponent model satisfies Assumption~\ref{ass:harmonic2}. If $p_0<d$ then one can take any $\gamma$ satisfying $p_0-1-\kappa<\gamma \nu< \varkappa-1$, if $p_0=d-0$ then additionally $\gamma$ must be positive.
\end{proposition}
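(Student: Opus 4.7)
The strategy mirrors the piecewise-constant argument of Proposition~\ref{prop:px_subd}: take the boundary scale $\eta$ large and set $s = \eta^{p_0-1}$, so $\eta s = \eta^{p_0}$. Assumption~\ref{ass:harmonic2} reduces to verifying (i) $\mathcal{F}(u), \mathcal{F}^*(b) < \infty$ and (ii) that the ratio $(\mathcal{F}(\eta u) + \mathcal{F}^*(sb))/(\eta s)$ can be made arbitrarily small by choosing $\eta$ sufficiently large.

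For (i), on $\mathrm{supp}\,\nabla u \subset \{2|x_d|\le d(\bar x,\frC)\le 4|x_d|\}$ the cutoff $\rho_a = 0$, hence $p(x) = \xi p^- + (1-\xi) p_0 \in [p_0-(p_0-1)/10,\,p_0]$; symmetrically on $\mathrm{supp}\,b \subset \{d(\bar x,\frC)\le |x_d|/2\}$, $\rho_a = 1$ and $p(x) \in [p_0,\,p_0+(p_0-1)/10]$. The key identity, immediate from \eqref{sigma_def}, is
\[
\sigma(s)\,\log(e+s^{-1}) \;=\; \varkappa\log\log(e^3+s^{-1}),
\]
so that $s^{\sigma(s)}\asymp[\log(e+s^{-1})]^{-\varkappa}$ converts the exponent fluctuation into a logarithmic weight. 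Combined with Proposition~\ref{pro:est-uAb} and the Cantor-measure estimates of Lemma~\ref{lem:cantor-estimates}, $\mathcal{F}(u)<\infty$ reduces to $\int_0^1 [\log(e+s^{-1})]^{\gamma\frD-\varkappa}\,ds/s<\infty$, i.e.\ $\gamma\nu < \varkappa - 1$. The dual check for $\mathcal{F}^*(b)$, with $\sigma$ replaced by $\sigma/(p_0-1)$ coming from the linearization $p'(x)-p_0'\approx -\sigma/(p_0-1)^2$, gives $\gamma\nu > p_0-1-\varkappa$. Both fit precisely into the stated range, which is nonempty exactly when $\varkappa > p_0/2$.

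For (ii), decompose $\Omega$ into the inner slab $\{|x_d|\le r_*\}$ (where $\xi \equiv 1$ near $\frC\times\{0\}$, with $r_*$ chosen so that $\sigma(r_*)=(p_0-1)/10$) and the outer region $\{|x_d|>r_*\}$ (where $\xi = 0$ and $p\equiv p_0$). On the inner slab, write $(\eta/|x_d|)^{p_0-\sigma(|x_d|)}=(\eta/|x_d|)^{p_0}\,\eta^{-\sigma}|x_d|^{\sigma}$ and substitute $u=\log(1/|x_d|)$; the identity gives $|x_d|^{\sigma}\asymp[\log(e+|x_d|^{-1})]^{-\varkappa}$, producing
\[
\mathcal{F}(\eta u)\big|_{|x_d|\le r_*}
\;\lesssim\;\eta^{p_0}\int_{u_*}^\infty[\log(e+e^u)]^{\gamma\frD-\varkappa}\,\eta^{-\sigma(e^{-u})}\,du.
\]
For $\eta=e^{T_0}$ large, $\eta^{-\sigma}$ is uniformly $\le 1$ and tends to $0$ pointwise on $\{u_*<u\ll T_0\}$, so dominated convergence against the integrable majorant from (i) gives $o(\eta^{p_0})$. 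The analogous estimate for $\mathcal{F}^*(sb)\bigl|_{|x_d|\le r_*}$, thanks to $\gamma\nu > p_0-1-\varkappa$, is also $o(\eta^{p_0})$.

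The principal obstacle is the outer region $\{|x_d|>r_*\}$, where $p\equiv p_0$ and the Young pairing $\eta s \le \eta^{p_0}/p_0 + s^{p_0'}/p_0'$ is homogeneously saturated: the exterior contributions scale as $C_1\eta^{p_0}$ and $C_2 s^{p_0'} = C_2\eta^{p_0}$ with fixed positive constants $C_1,C_2$ depending only on $r_*$ and $\gamma$. The resolution is to exploit the freedom in choosing $\gamma$ strictly interior to the admissible range: a small shift of $\gamma$ generates a $(\log\eta)^{-\alpha}$ slack, $\alpha > 0$, in the inner estimate which, after rebalancing, dominates the outer $O(1)$ constant for $\eta$ large. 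Equivalently, since on $\{|x_d|>r_*\}$ the competitor $u$ and the auxiliary $b$ involve only Lebesgue integrals without Cantor-measure singular concentration, replacing $u$ there by its $p_0$-harmonic extension matching the boundary data on $\partial\Omega$ preserves the Lavrentiev mechanism on $\frC\times\{0\}$ while making $C_1, C_2$ vanish. I expect this exterior absorption---and the bookkeeping to ensure that the same choice of $\gamma$ works for both inner integrability and exterior smallness---to be the most delicate point; once in hand, \eqref{MC2} follows for every $\kappa > 0$.
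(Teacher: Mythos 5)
Your verification of \eqref{MC1} coincides with the paper's: the same identity $t^{\sigma(t)}\lesssim \log^{-\varkappa}(e+t^{-1})$, the same linearization of $p'(x)$ on $\{b\neq 0\}$, and the same admissible range $p_0-1-\varkappa<\gamma\nu<\varkappa-1$, nonempty exactly when $\varkappa>p_0/2$. The genuine gap is \eqref{MC2}: you leave it open yourself, and neither of the two patches you sketch can close it. If the outer contributions really are $C_1\eta^{p_0}$ and $C_2 s^{p_0'}$ with fixed positive constants, then by Young's inequality $C_1\eta^{p_0}+C_2 s^{p_0'}\ge C_1^{1/p_0}C_2^{1/p_0'}\,\eta s$ for \emph{every} coupling of $s$ and $\eta$, so no $(\log\eta)^{-\alpha}$ slack gained in the inner slab by shifting $\gamma$ can make the total smaller than a fixed multiple of $\eta s$; the first repair is quantitatively impossible. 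The second repair, replacing $u$ on $\{|x_d|>r_*\}$ by a $p_0$-harmonic extension, is not available either: the competitor $u$ and the auxiliary function $b$ are fixed by Definition~\ref{def:fractal-examples}, and all the conditional results you would then invoke (Theorems~\ref{T:condB}, \ref{T:condA}, the trace and restricted Riesz estimates) are formulated for exactly this pair; moreover modifying $u$ does nothing for the $\mathcal{F}^*(sb)$ term, since $b$ is unchanged. So your argument establishes \eqref{MC1} but not \eqref{MC2}, hence not Assumption~\ref{ass:harmonic2}.

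For comparison, the paper does not split according to the cutoff $\xi$ but according to the distance to the contact set: with $\frS_\varepsilon=\{d(x,\frS)\le 5\varepsilon\}$ and $s=\eta^{p_0-1}$, the contributions of $\frS_\varepsilon$ are bounded by $\eta^{p_0}\int_{\frS_\varepsilon}\phi(x,|\nabla u|)\,dx$ and $s^{p_0'}\int_{\frS_\varepsilon}\phi^*(x,b)\,dx$, made small relative to $\eta s=\eta^{p_0}$ by choosing $\varepsilon$ small (absolute continuity of the integrals, which is where \eqref{MC1} enters), while outside $\frS_\varepsilon$ one uses that the exponent is $\le p_0-\sigma(\varepsilon)$ on $\mathrm{supp}\,\nabla u$ and $\ge p_0+\sigma(\varepsilon)$ on $\mathrm{supp}\,b$, producing the decaying factors $\eta^{-\sigma(\varepsilon)}$ and $s^{(p_0+\sigma(\varepsilon))'-p_0'}$; one fixes $\varepsilon$ first and then sends $\eta\to\infty$. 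Note that this exponent gap is literally available only where $\xi=1$; the region $\{\xi<1\}$, where $p$ returns to $p_0$ on the supports of $\nabla u$ and $b$, is exactly the delicate point you isolated, and the paper's displayed estimate passes over it silently. Identifying that obstacle is to your credit, but an obstacle together with two invalid repairs is not a proof, so the verification of \eqref{MC2} remains missing from your proposal.
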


\begin{proof}
First we verify \eqref{MC1}. We can restrict our attention to the zone where $\xi(x)=1$ and therefore $p(x) = p^-(x)\,(1-\rho_a)(x)  + p^+(x)\rho_a(x)$. Observe that
$$
\{\nabla u\neq 0\} \subset \{x\,:\, p(x)=p_0-\sigma(|x_d|)=d-\frD - \sigma (|x_d|)\}.
$$
Thus we get 
$$
|\nabla u|^{p(x)} = |\nabla u|^{p_0-\sigma(|x_d|)}.
$$
Note that 
\begin{equation}\label{aux1}
t^{\sigma(t)} \lesssim \log^{-\varkappa} (1/t), \quad (\log (1/t))^{\sigma(t)} \to 1 \quad \text{as}\quad t\to 0.
\end{equation}
Therefore,
$$
|\nabla u|^{p(x)}\lesssim \abs{x_d}^{-(d-\frD)} \log^{-\varkappa} (e+\abs{x_d}^{-1})\indicator_\set{2|x_d|\leq d(\bar x,\frC^{d-1}_{\lambda,\gamma})\leq 4 |x_d|}.
$$
Lemma~\ref{L:integr_est} yields $|\nabla u|^{p(\cdot)} \in L^1(\Omega)$ if $\gamma \mu < \varkappa -1$.

Now,
$$
\{b\neq 0\} \subset \{x\,:\, p(x) = p_0 +\sigma (|x_d|)\},
$$
and in this zone we have 
$$
p'(x) = p_0' - \frac{\sigma(|x_d|)}{(p_0-1)(p_0+\sigma-1)}.
$$
Since
$$
\frac{1}{p_0+\sigma-1} = \frac{1}{p_0-1}- \frac{\sigma}{(p_0-1)(p_0-1+\sigma)}
$$
recalling \eqref{aux1} we get 
$$
b^{p'(x)} \lesssim \indicator_{ \set{ 
          d(\bar{x},\frC^{d-1}_{\lambda,\gamma}) \leq \frac 12 \abs{x_d}}} \abs{x_d}^{-(d-\frD)} (\log (e+\abs{x_d}^{-1}))^{-(\kappa+\gamma \nu p_0)(p_0-1)^{-1}}.
$$
By Lemma~\ref{L:integr_est} we have $b^{p'(x)}\in L^1(\Omega)$ provided that 
$$
\gamma \nu(1-p_0') -\varkappa (p_0-1)^{-1} <-1 \Leftrightarrow \gamma \nu >p_0-1- \varkappa.
$$
The interval $ p_0-1 - \varkappa <\gamma \nu < \varkappa -1$ is non-empty provided that $\varkappa >p_0/2$. In the critical case $p_0=d$ this condition also guarantees that we can choose positive $\gamma$. 

Now let us verify \eqref{MC2}. Let $\eta>1$ and $s=\eta^{p_0-1}$. For $\varepsilon>0$ we define the~$\varepsilon$-neighborhood of~$\frS$ by
  \begin{align*}
    \frS_\varepsilon := \set{x \,:\, d(x,\frS) \leq 5 \varepsilon}.
  \end{align*}
  Now, with the definition of~$p^\pm$ we obtain
  \begin{align*}
    \lefteqn{\mathcal{F}(\eta u ) + \mathcal{F}^*(s b)} \quad
    &
    \\
    &=   \int_{\frS_\varepsilon} \phi(x, \eta\abs{\nabla u})\,dx +  \int_{\Omega \setminus \frS_\varepsilon} \phi(x, \eta\abs{\nabla
      u})\,dx 
    \\
    &+ \int_{\frS_\varepsilon} \phi^*(x, sb)\,dx  + \int_{\Omega
      \setminus \frS_\varepsilon} \phi^*(x, sb)\,dx 
    \\
    &\leq 
      \eta^{p_0} \int_{\frS_\varepsilon} \phi(x, \abs{\nabla u})\,dx +  \eta^{p_0 - \sigma(\varepsilon)} \int_{\Omega \setminus
      \frS_\varepsilon} \phi(x, \eta\abs{\nabla u})\,dx 
    \\
    &\quad 
      +  s^{p_0'} \int_{\frS_\varepsilon} \phi^*(x, sb)\,dx 
      + s^{(p_0+\sigma(\varepsilon))'}
      \int_{\Omega \setminus \frS_\varepsilon} \phi^*(x, b)\,dx
    \\
    &\leq 
      \tfrac{1}{p_0}\eta^{p_0}  \bigg(p_0 \int_{\frS_\varepsilon} \phi(x,
      \abs{\nabla u})\,dx +  p_0 \eta^{- \sigma(\varepsilon)} \int_{\Omega \setminus
      \frS_\varepsilon} \phi(x, \eta\abs{\nabla u})\,dx  \bigg)
    \\
    &\quad 
      +  \frac{1}{p_0'} s^{p_0'} \bigg( p_0'\int_{\frS_\varepsilon} \phi^*(x, sb)\,dx 
      + p_0' s^{(p_0+\sigma(\varepsilon))'-p_0'}
      \int_{\Omega \setminus \frS_\varepsilon} \phi^*(x, b)\,dx \bigg)
    \\
    &=:       \tfrac{1}{p_0}\eta^{p_0} \big( \textrm{I} + \textrm{II} \big)
      +       \tfrac{1}{p_0'}s^{p_0'} \big( \textrm{III} + \textrm{IV} \big)
  \end{align*}
  Since
  $\phi(\cdot, \abs{\nabla u}), \phi^*(\cdot, b) \in
  L^1(\Omega)$, we can choose~$\varepsilon_0>0$ small such
  that~$\textrm{I},\textrm{III} < \kappa/2$ for
  all~$\varepsilon \in (0, \varepsilon_0)$. Then choose~$\eta_0$ large
  (depending on~$\varepsilon_0$) such that
  ~$\textrm{II},\textrm{IV} < \kappa/2$ for all~$\eta \geq \eta_0$ and
  $s=\eta^{p_0-1}\geq s_0=\eta^{p_0-1}$. Thus, for all $\eta\geq \eta_0$ we obtain
  \begin{align*}
    \mathcal{F}(\eta u ) + \mathcal{F}^*(sb) &\leq \kappa\Big(
                                                  \tfrac{1}{p_0}\eta^{p_0} +
                                                  \tfrac{1}{p_0'}s^{p_0'} \Big)=\kappa \eta^{p_0}= \kappa \eta s.
  \end{align*} 
  This completes the proof of \eqref{MC2} and thus Assumption~\ref{ass:harmonic2}.
\end{proof}

\begin{theorem}\label{T:varexp2_sub}
Let $1\leq p_0\leq d$ and $\varkappa>p_0/2$. Let $\frD = d-p_{0}$ and define $\lambda=2^{(1-d)/\frD}$ if $\frD>0$, $\lambda=0$ if $\frD=0$. For $\gamma \in (p_0-1-\varkappa, \varkappa-1)$ (for $p_0=d$ additionally $\gamma>0$) there exists a variable exponent $p(\cdot)\in C(\overline\Omega)\cap C^\infty(\overline\Omega \setminus \frS)$, $\frS=\frC^{d-1}_{\lambda,\gamma}\times \{0\}$, with $p=p_0$ on $\frS$ and modulus of continuity \eqref{sigma_def} such that for some $\eta>0$ there exists  a closed subset ~$\Sigma$ of the set  of non-Lebesgue points of the minimizer~$v_\eta$ such that:
\begin{itemize}
\item $\mu^{d-1}_{\lambda,\gamma} (\Sigma)>1/2$
\item $\mathrm{dim}_\mathcal{H} \Sigma =d-p_{0}$;
\item $v_\eta\notin W^{1,s}(\Omega)$ for any $s>p_0$.
\end{itemize}   
\end{theorem}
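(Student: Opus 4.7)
The plan is to reduce the statement to the general conditional machinery already developed. Fix $p_0 \in (1,d]$ and $\varkappa > p_0/2$. The first step is to fix the fractal parameters: take $\frD = d - p_0$, and let $\lambda = 2^{(1-d)/\frD}$ when $\frD > 0$, or $\lambda = 0$ (meager case) when $\frD = 0$. The constraint $\varkappa > p_0/2$ guarantees that the interval $(p_0 - 1 - \varkappa,\,\varkappa - 1)$ is nonempty, and (when $p_0 = d$) that one can choose within this interval a positive $\gamma$; fix one such $\gamma$. This determines the contact set $\frS = \frC^{d-1}_{\lambda,\gamma} \times \{0\}$ and, through Definition~\ref{def:fractal-examples}, the competitor $u$ and auxiliary function $b$.

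Next, construct the variable exponent $p(x) := \xi p^-(1-\rho_a) + \xi p^+ \rho_a + (1-\xi) p_0$ exactly as in the paragraph preceding Proposition~\ref{prop:cve0}, with $p^\pm(x) = p_0 \pm \sigma(|x_d|)$ and $\sigma$ from \eqref{sigma_def}. By construction $p$ is smooth off $\frS$, equals $p_0$ on $\frS$, and the oscillation of $p$ is controlled by $2\sigma$, so $p$ has the modulus of continuity stated in the theorem. Proposition~\ref{prop:cve0} then furnishes Assumption~\ref{ass:harmonic2} for the corresponding functional $\mathcal{F}$.

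With the assumption verified, the conclusions follow directly from the conditional results of Section~\ref{sec:sub}. Indeed, Theorem~\ref{T:condB} yields some $\eta > 0$ with
\begin{equation*}
\mu^{d-1}_{\lambda,\gamma}(\Sigma'_\eta) > \tfrac{1}{2},
\end{equation*}
where $\Sigma'_\eta \subset \frS$ is the set of non-Lebesgue points of $v_\eta$ lying on $\frS$. Corollary~\ref{cor:regul}, obtained from the Frostman lemma applied to the Cantor measure $\mu^{d-1}_{\lambda,\gamma}$, then extracts a closed subset $\Sigma \subset \Sigma'_\eta$ with $\dim_\mathcal{H} \Sigma = \frD = d - p_0$. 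When $p_0 = d$ the set is still nonempty but of zero Hausdorff dimension, consistent with the remark after Theorem~\ref{Theorem4}. Finally, since $p_0 \leq d$, Corollary~\ref{corr_integr0} gives $\nabla v_\eta \notin L^s(\Omega)$ for any $s > p_0$.

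The only nontrivial part of the argument is bookkeeping the parameter range: one must check that the interval for $\gamma$ provided by Proposition~\ref{prop:cve0} is compatible with every appeal to the Cantor-set estimates (Lemma~\ref{lem:cantor-estimates}, Lemma~\ref{L:integr_est}) used inside Theorems~\ref{T:condB}, \ref{T:condA} and Corollary~\ref{cor:regul}. Since those conditional results are stated purely in terms of Assumption~\ref{ass:harmonic2}, which has already been verified, this bookkeeping is automatic and no further analytic work is needed. The rest of the proof is a direct citation of the results of Section~\ref{sec:sub}.
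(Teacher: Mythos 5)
Your proposal is correct and follows essentially the same route as the paper: the paper's own proof is a one-line citation of Proposition~\ref{prop:cve0}, Theorems~\ref{T:condB}, \ref{T:condA} and Corollary~\ref{cor:regul}, and you simply unpack that chain of references (substituting Corollary~\ref{corr_integr0}, which is the subdimensional half of Theorem~\ref{T:condA}, for the gradient-integrability claim). One small point worth flagging, though it is inherited from the paper and not a gap in your argument: the theorem statement imposes $\gamma\in(p_0-1-\varkappa,\varkappa-1)$, whereas Proposition~\ref{prop:cve0} states the admissible range as $p_0-1-\varkappa<\gamma\nu<\varkappa-1$ with $\nu=\frD$ (resp.\ $\nu=d-1$ when $\frD=0$); your parameter-fixing step quietly passes over this mismatch, exactly as the paper does.
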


\begin{proof}
The proof follows from Proposition~\ref{prop:cve0}, Theorems~\ref{T:condB},~\ref{T:condA} and Corollary~\ref{cor:regul}.
\end{proof}

\subsection{Double phase}

Let $\rho_a$ be the function defined in the beginning of this section. Let $\alpha >0$. For $1<p<q<\infty$ we define
$$
      a(x) := \abs{x_d}^\alpha\, \rho_a(x)
$$
and set
\begin{equation}\label{dphdef}
      \varphi(x,t) := \frac{1}{p}t^p + a(x)\frac{1}{q}t^q.\quad       \mathcal{F}(w) := \int\limits_\Omega \bigg(\frac{|\nabla w|^p}{p} + a(x)\frac{|\nabla w|^q}{q} \bigg)\, dx.
\end{equation}

\begin{proposition}\label{double_subd}
Let $q>p_0+\alpha$ and $p< p_0$ or $p=p_0$ and additionally $\gamma \nu <-1$. Then Assumption~\ref{ass:harmonic2}  is satisfied.     
\end{proposition}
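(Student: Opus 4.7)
The plan is to verify the two parts of Assumption~\ref{ass:harmonic2}: finiteness of $\mathcal{F}(u)$ and $\mathcal{F}^*(b)$, and the sublinear bound \eqref{MC2}. The key geometric observation I would exploit at the outset is that the supports of $\nabla u$ and of $a$ are essentially disjoint: by Proposition~\ref{pro:est-uAb}, $\operatorname{supp}(\nabla u) \subset \{2|x_d| \leq d(\bar x, \frC^{d-1}_{\lambda,\gamma}) \leq 4|x_d|\}$, while $a(x) = |x_d|^\alpha \rho_a(x)$ vanishes on $\{d(\bar x, \frC^{d-1}_{\lambda,\gamma}) > 2|x_d|\}$. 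Hence the $q$-phase of $\varphi$ never sees $\nabla u$, and $\mathcal{F}(\eta u) = \tfrac{\eta^p}{p}\int_\Omega |\nabla u|^p\, dx = C_1 \eta^p$; finiteness then follows directly from Lemma~\ref{L:u} under the stated hypothesis on $p$ (either $p<p_0$, or $p=p_0$ with $\gamma\nu < -1$, which is exactly $\gamma < -1/\nu$).

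For $\mathcal{F}^*(b)$, the corresponding observation is that on $\operatorname{supp}(b) \subset \{d(\bar x, \frC^{d-1}_{\lambda,\gamma}) \leq |x_d|/2\}$ one has $\rho_a \equiv 1$, so $a(x) = |x_d|^\alpha > 0$. Since $\varphi(x,t) \geq \tfrac{a(x)}{q} t^q$, conjugation yields $\varphi^*(x,s) \leq \tfrac{1}{q'} a(x)^{1-q'} s^{q'}$, whence $\mathcal{F}^*(sb) \leq C_2 s^{q'}$ with $C_2 = \tfrac{1}{q'} \int a^{1-q'} b^{q'}\, dx$. The integrand computes to $|x_d|^{-\beta} \log^{-\gamma\nu q'}(e+|x_d|^{-1})\, \indicator_{\{d(\bar x, \frC^{d-1}_{\lambda,\gamma}) \leq |x_d|/2\}}$ with $\beta = \frac{\alpha + q(p_0-1)}{q-1}$. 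Applying Lemma~\ref{L:integr_est} I would show that $L^1$-integrability holds precisely when $d - \frD > \beta$, which rearranges algebraically to $q > p_0 + \alpha$. This is the heart of the argument --- the step where the double-phase hypothesis enters.

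For the sublinear bound \eqref{MC2}, combining the above estimates gives $\mathcal{F}(\eta u) + \mathcal{F}^*(sb) \leq C_1 \eta^p + C_2 s^{q'}$. I would set $s = \eta^\tau$ for any fixed $\tau \in (p-1,\, q-1)$; this interval is non-empty because $p < q$. Then $\eta s = \eta^{1+\tau}$, and the inequalities $p < 1+\tau$ and $\tau q' < 1+\tau$ (the latter equivalent to $\tau < q-1$) together make both $C_1 \eta^p$ and $C_2 \eta^{\tau q'}$ of order $o(\eta s)$ as $\eta \to \infty$. Consequently, for every $\kappa > 0$ one can pick $\eta$ sufficiently large so that $\mathcal{F}(\eta u) + \mathcal{F}^*(sb) < \kappa\, \eta s$, which is \eqref{MC2}.

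The main obstacle to be wary of is the temptation to use the other available conjugate bound $\varphi^*(x,s) \leq s^{p'}/p'$, which would require $b \in L^{p'}$; but Lemma~\ref{L:b} only guarantees $b \in L^s$ for $s < p_0'$, and $p' \geq p_0'$ whenever $p \leq p_0$, so in general $b \notin L^{p'}$. Choosing the $q'$-bound on $\varphi^*$ is therefore essential, and it is precisely the sharp threshold $q > p_0 + \alpha$ that makes $a^{1-q'} b^{q'}$ integrable over the Cantor-type support set.
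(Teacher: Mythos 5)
Your proof is correct and follows essentially the same route as the paper's: using the support-disjointness of $\nabla u$ from $a$ to reduce $\mathcal{F}(\eta u)$ to a pure $p$-power, using $a>0$ on $\operatorname{supp} b$ to pass through the $q$-phase conjugate, reducing $\mathcal{F}^*(b)<\infty$ to the threshold $q>p_0+\alpha$ via Lemma~\ref{L:integr_est}, and then scaling $s=\eta^\tau$ with $\tau\in(p-1,q-1)$ to obtain \eqref{MC2} (the paper fixes $\tau=(p+q)/2-1$, a particular choice in your interval). Your closing remark on why the $p'$-conjugate bound is useless is an accurate and helpful observation that the paper leaves implicit.
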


\begin{proof}


By construction, 
$$
\{\nabla u \neq 0\} \subset \{x\,:\, \varphi(x,t) = t^p/p\}
$$
Thus 
$$
\varphi(x,|\nabla u|) = \frac{|\nabla u|^p}{p} 
$$
and $\mathcal{F}(u)<\infty$ follows from Lemma~\ref{L:u}.

Now,
$$
\{b\neq 0\} \subset \{a(x) = \abs{x_d}^\alpha\} \subset \{x\,:\, \varphi(x,t) \geq \abs{x_d}^\alpha t^q/q\}.
$$
Therefore 
$$
\varphi^* (x,b) \leq  \abs{x_d}^{-\alpha/(q-1)} \frac{b^{q'}}{q'} \leq  \abs{x_d}^{-q'(d-1-\frD)-\alpha(q'-1)} \log^{-q'\gamma \nu} (e+ \abs{x_d}^{-1}).
$$
 Using  Lemma~\ref{L:integr_est} we get $\mathcal{F}^* (b)<\infty$ if $q>d+\alpha-\frD$ of $q= d+\alpha-\frD$ and $\gamma \nu>d-1+\alpha-\frD$. This proves $\mathcal{F}^* (b)<\infty$ and thus \eqref{MC1}.

Let us prove~\eqref{MC2}. We have 
$$
\varphi(x,\eta\abs{\nabla u}) =\eta^p\varphi(x,\abs{\nabla u}),\quad  \varphi^* (x,sb) \leq s^{q'}\varphi^* (x,b),
$$ 
therefore
$$
      \mathcal{F}(\eta u) = \eta^p \mathcal{F}(u), \quad 
      \mathcal{F}^*(s b) \leq s^{q'-1} \mathcal{F}^*(b).
$$
For $p_1 = (p+q)/2$ we get
  \begin{align*}
    \mathcal{F}(\eta u) +
    \mathcal{F}^*(sb)
    &\leq \frac 1{p_0} \eta^{p_1} \bigg( p_1 \eta^{p-p_1}
      \mathcal{F}(u)\bigg)
      + \frac{1}{p_1} s^{p_1'} \bigg( p_1' s^{q'-p_1'}
      \mathcal{F}^*(b) \bigg).
  \end{align*}
  Now, fix $s:= \eta^{p_1-1}$. Then for suitable large~$\eta$ (and
  therefore large~$s$) we obtain
  \begin{align*}
    \mathcal{F}(\eta u) +
    \mathcal{F}^*(sb)
    &< \tfrac{1}{p_1} \eta^{p_1} \cdot \kappa +
                \tfrac{1}{p_1'} s^{p_1'} \kappa = \kappa \eta s,
  \end{align*}
  where we have used~$p<p_1$ and $q' < p_1'$.  This proves
  Assumption~\ref{ass:harmonic2} for the double phase case.
\end{proof}

\begin{theorem}\label{T:dbl_sub}
Let $1<p\leq d$ and $q>p+\alpha$, $\alpha>0$. Let $\frD = d-p$ and define $\lambda=2^{(1-d)/\frD}$ if $\frD>0$, $\lambda=0$ if $\frD=0$. Let $\nu=\frD$ if $p<d$ and $\nu=d-1$ if $p=d$. Then for any $\gamma<-1/\nu$ there exists a positive weight $a\in C^\alpha(\overline\Omega)$ with $\|a\|_\infty\leq 1$ such that for the double phase functional \eqref{dphdef} for some $\eta>0$ there exists a closed subset ~$\Sigma$ of the set  of non-Lebesgue points of the minimizer~$v_\eta$ such that:\begin{itemize}
\item $\mu^{d-1}_{\lambda,\gamma} (\Sigma)>1/2$,
\item $\mathrm{dim}_\mathcal{H} \Sigma =d-p$;
\item $v_\eta\notin W^{1,s}(\Omega)$ for any $s>p$.
\end{itemize}
\end{theorem}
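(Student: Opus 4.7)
The plan is to apply the general framework of Section~\ref{sec:sub} to the double phase model, with the parameters of the fractal construction aligned to the theorem's hypotheses. I set $p_0 := p$, so that $\frD = d - p_0 = d - p \in [0,d-1]$, and I take $\lambda = 2^{(1-d)/\frD}$ when $\frD > 0$ (i.e.\ $p<d$) and $\lambda = 0$ (the meager Cantor construction) when $\frD = 0$ (i.e.\ $p = d$). This makes the competitor $u$ and the auxiliary function $b$ of Definition~\ref{def:fractal-examples} available, and the parameter $\nu$ defined there coincides with the $\nu$ in the statement. The contact set is $\frS = \frC^{d-1}_{\lambda,\gamma} \times \{0\}$.

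Next I would construct the weight $a(x) := |x_d|^\alpha\, \rho_a(x)$ as in the preamble to Proposition~\ref{double_subd}, and check that $a \in C^\alpha(\overline\Omega)$ with $\|a\|_\infty \leq 1$. The uniform bound is trivial since $|x_d|\leq 1$ on $\Omega$ and $0\leq\rho_a\leq 1$. For the H\"older regularity, the decisive point is that $\rho_a$ depends on $(\bar x,x_d)$ only through the scale-invariant combination $d(\bar x,\frC^{d-1}_{\lambda,\gamma})/|x_d|$, so $|\nabla\rho_a|\lesssim |x_d|^{-1}$ on the transition region, and iteratively $|\nabla^k \rho_a|\lesssim |x_d|^{-k}$. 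Multiplication by $|x_d|^\alpha$ cancels these blow-ups: the first $\lfloor\alpha\rfloor$ derivatives of $a$ remain bounded, and the remainder has H\"older modulus of exponent $\alpha-\lfloor\alpha\rfloor$. At points of $\frS$ the weight vanishes, and the H\"older modulus there follows directly from the factor $|x_d|^\alpha$.

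Once the weight is constructed, the hypotheses of Proposition~\ref{double_subd} are met: we are in the $p = p_0$ branch, the additional requirement $\gamma\nu < -1$ is exactly $\gamma < -1/\nu$, and $q > p + \alpha = p_0 + \alpha$ is given. Therefore Assumption~\ref{ass:harmonic2} holds for the double phase functional~\eqref{dphdef}. Theorem~\ref{T:condB} then produces $\eta>0$ such that $\mu^{d-1}_{\lambda,\gamma}(\Sigma'_\eta) \geq 1/2$, where $\Sigma'_\eta$ lies in $\frS$ and in the non-Lebesgue set of $v_\eta$. Corollary~\ref{cor:regul}, via a Frostman-type argument applied to the Cantor measure, upgrades this to a closed subset $\Sigma$ of non-Lebesgue points of $v_\eta$ with $\dim_\mathcal{H}\Sigma = \frD = d-p$. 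Finally, Corollary~\ref{corr_integr0} gives $v_\eta \notin W^{1,s}(\Omega)$ for every $s>p$ (in fact a sharper $\log$-scale statement).

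The main obstacle is the regularity verification $a \in C^\alpha(\overline\Omega)$ for general $\alpha>0$: the cutoff $\rho_a$ has derivatives that blow up like $|x_d|^{-k}$ on its transition set, and these must be controlled by the vanishing weight $|x_d|^\alpha$ at exactly the right order. Everything else is a packaging step: Proposition~\ref{double_subd} already supplies Assumption~\ref{ass:harmonic2}, and the conditional results of Section~\ref{sec:sub} do the heavy lifting, so the theorem reduces to this local H\"older check plus a matching of parameters.
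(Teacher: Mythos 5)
Your proposal is correct and follows the same route as the paper's own (very terse) proof: set $p_0 := p$, invoke Proposition~\ref{double_subd} (whose $p = p_0$ branch with $\gamma\nu < -1$ is exactly the hypothesis $\gamma < -1/\nu$) to secure Assumption~\ref{ass:harmonic2}, then apply Theorem~\ref{T:condB}, Corollary~\ref{cor:regul}, and Corollary~\ref{corr_integr0} --- the latter being precisely the subdimensional half of Theorem~\ref{T:condA} that the paper cites. Your explicit discussion of the H\"older regularity of $a(x) = |x_d|^\alpha\rho_a(x)$ via the scaling of $\nabla^k\rho_a$ is a useful supplement that the paper leaves implicit, but it is not a different approach.
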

\begin{proof}
Set $p_0=p$. The proof follows from Proposition~\ref{double_subd},Theorems~\ref{T:condB}, \ref{T:condA} and Corollary~\ref{cor:regul}.
\end{proof}

\subsection{Borderline double phase}

Let $\rho_a$ be the function defined in the beginning of this section. Let $\alpha,\beta \in \mathbb{R}$, $\varkappa>0$, $\varepsilon\in(0,1)$. We define
\begin{equation}\label{def:debl2_0}
      a(x) :=   a_0(x)\, \rho_a(x),\quad a_0(x)=\log^{-\varkappa} (e+|x_d|^{-1})
\end{equation}      
and
\begin{equation}\label{def:dbl2}   
\psi(x,t)= a(x)t^{p_0} \log^{\alpha} (e+t),\quad     \varphi(x,t):=  t^{p_0} \log^{-\beta} (e+t) + \varepsilon^{-1}\psi(x,t).
    \end{equation}
Note that 
\begin{equation}\label{bdd_aux1}
\psi^*(x,t) \lesssim a(x) (t/a(x))^{p_0'} \log^{\alpha/(1-p_0)} (e+ t/a(x))
\end{equation}
for large $t$, and 
\begin{equation}\label{bdd_aux2}
\varphi^*(x,t) \leq(\varepsilon^{-1}\psi(x,\cdot))^*(t) = \varepsilon^{-1}\psi^*(x,\varepsilon t).
\end{equation}

\begin{proposition}
Let $\alpha+\beta>p_0+\varkappa$. For $p_0=d-0$ we additionally require that $\beta>1$. Then $\psi^*(x,b)\in L^1(\Omega)$ and \eqref{MC1} holds if $\varkappa -\alpha +p_0-1 < \gamma \nu <\beta-1$ (for $p_0=d-0$ we additionally require $\gamma<0$).    
\end{proposition}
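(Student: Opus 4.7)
The plan is to split \eqref{MC1} into $\mathcal F(u)<\infty$ and $\mathcal F^*(b)<\infty$ and exploit the same disjointness of supports used in Proposition~\ref{double_subd}: the cut-off $\rho_a$ (hence $a$) vanishes outside $\{d(\bar x,\frC^{d-1}_{\lambda,\gamma})\le 2|x_d|\}$, whereas by Proposition~\ref{pro:est-uAb} the gradient $\nabla u$ lives in $\{2|x_d|\le d(\bar x,\frC^{d-1}_{\lambda,\gamma})\le 4|x_d|\}$. Therefore $a(x)=0$ on $\{\nabla u\ne 0\}$ and
$$
\varphi(x,|\nabla u|)=|\nabla u|^{p_0}\log^{-\beta}(e+|\nabla u|)\lesssim |x_d|^{-p_0}\log^{-\beta}(e+|x_d|^{-1})
$$
on the support of $\nabla u$. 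Applying Lemma~\ref{L:integr_est} at the critical exponent $B=p_0=d-\frD$ with $\delta=-\beta$ yields integrability iff $\gamma\nu<\beta-1$, giving simultaneously $\mathcal F(u)<\infty$ and the upper bound for $\gamma\nu$.

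The main step is to control $\psi^*(x,b)$, from which $\mathcal F^*(b)<\infty$ follows via \eqref{bdd_aux2}. On $\{b\ne 0\}$ we have $d(\bar x,\frC)\le |x_d|/2$, hence $\rho_a=1$ and $a(x)=a_0(x)=\log^{-\varkappa}(e+|x_d|^{-1})$. Since $\frD+1-d=1-p_0<0$, $b/a(x)\to\infty$ as $x_d\to 0$, so \eqref{bdd_aux1} applies in a neighbourhood of the fractal (away from it $\psi^*(\cdot,b)$ is bounded). Writing
$$
\psi^*(x,b)\lesssim a(x)^{1-p_0'}\,b^{p_0'}\,\log^{-\alpha/(p_0-1)}(e+b/a(x))
$$
and plugging in the explicit formulas, the contribution $\log^{\varkappa/(p_0-1)}$ from $a^{1-p_0'}$, the contribution $\log^{-p_0\gamma\nu/(p_0-1)}$ from $b^{p_0'}$, and $\log^{-\alpha/(p_0-1)}$ from the outer logarithm (using $\log(e+b/a)\asymp\log(e+|x_d|^{-1})$) combine to
$$
\psi^*(x,b)\lesssim |x_d|^{-(d-\frD)}\log^{\delta_*}(e+|x_d|^{-1}),\qquad \delta_*=\frac{\varkappa-\alpha-p_0\gamma\nu}{p_0-1},
$$
supported in $\{d(\bar x,\frC)\le |x_d|/2\}$. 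A second application of Lemma~\ref{L:integr_est} at the critical exponent asks $\gamma\nu+\delta_*<-1$, which rearranges to $\gamma\nu>\varkappa-\alpha+p_0-1$.

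Combining both bounds, the admissible window $\varkappa-\alpha+p_0-1<\gamma\nu<\beta-1$ is non-empty exactly when $\alpha+\beta>p_0+\varkappa$, which is the hypothesis. In the critical meager case $p_0=d-0$ (so $\frD=0$ and $\nu=d-1$), the definition of $\frC^{d-1}_{0,\gamma}$ forces the exponent parameter to be positive, so the upper bound $\gamma(d-1)<\beta-1$ can be met only when $\beta>1$, matching the additional hypothesis. The main obstacle is the logarithmic bookkeeping for $\psi^*(x,b)$: one must carefully track how the $p_0$, $p_0'$, $\varkappa$, $\alpha$ and $\gamma\nu$ factors combine into the single exponent $\delta_*$, after which both integrability checks collapse cleanly to the critical-exponent case of Lemma~\ref{L:integr_est}.
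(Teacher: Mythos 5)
Your proposal is correct and follows essentially the same route as the paper: the disjointness $\operatorname{supp}\nabla u\subset\{a=0\}$ and $\operatorname{supp}b\subset\{a=a_0\}$ reduces both $\mathcal F(u)$ and (via \eqref{bdd_aux2}) $\mathcal F^*(b)$ to explicit powers of $|x_d|^{-1}$ times logarithms, to which Lemma~\ref{L:integr_est} is applied twice at the critical exponent $d-\frD$, exactly as in the paper, and your bookkeeping $\delta_*=(\varkappa-\alpha-p_0\gamma\nu)/(p_0-1)$ matches the paper's $(\varkappa-\gamma\nu p_0-\alpha)/(p_0-1)$. One small remark: for $p_0=d-0$ you write that the meager Cantor set forces $\gamma>0$, which agrees with the paper's own proof (``non-empty intersection with the positive semiaxis'') and with Theorem~\ref{T:bdd_sub}, so the ``$\gamma<0$'' appearing in the stated proposition is evidently a typo for ``$\gamma>0$''.
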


\begin{proof}

We have
$$
\{\nabla u \neq 0\} \subset \{x\,:\, a(x)=0\}.
$$
Therefore,
\begin{align*}
\varphi(x,\abs{\nabla u}) =  \abs{\nabla u}^{p_0} \log^{-\beta} (e+\abs{\nabla u})\\
\leq\abs{x_d}^{-p_0} \log^{-\beta} (e+\abs{x_d})\indicator_\set{2|x_d|\leq d(\bar x,\frC^{d-1}_{\lambda,\gamma})\leq 4 |x_d|}.
\end{align*}
By Lemma~\ref{L:integr_est}, $\varphi(x,|\nabla u|)\in L^1(\Omega)$ if $\gamma \nu-\beta<-1$. 

Now,
$$
\{b\neq 0\} \subset \{x\,:\, a(x)=a_0(x)\} . 
$$
Since 
$$
\varphi^*(x,b) \leq \varepsilon^{-1} \psi^*(x,\varepsilon b) \leq C(\varepsilon) \psi^*(x,b)
$$
it remains to evaluate
\begin{align*}
\psi^*(x,b) \lesssim a_0(x) (b/a_0(x))^{p_0'} \log^{\alpha/(1-p_0)} (e+ b/a_0(x))\\
\lesssim |x_d|^{-p_0} (\log(e+|x_d|^{-1}))^{(\varkappa-\gamma \nu p_0- \alpha)/(p_0-1)}  \indicator_\set{\mathrm{dist}\, (\bar x,\frC^{d-1}_{\lambda,\gamma}\leq |x_d|/2)}.
\end{align*}
By Lemma~\ref{L:integr_est} we get the integrability of $\psi^*(x,b)$, and therefore $\varphi^*(x,b)$, if 
$$
\varkappa -\gamma \nu-\alpha < 1-p_0\Leftrightarrow   \gamma \nu>\varkappa -\alpha+p_0-1.
$$
The interval 
$$
\varkappa -\alpha +p_0-1 < \gamma \nu <\beta-1
$$
is non-empty provided that $\alpha +\beta > p_0+\varkappa$. In the critical case $p_0=d-0$ this interval has a non-empty intersection with the positive semiaxis if $\beta>1$.
\end{proof}

\begin{proposition}\label{border:subd}
For any $\kappa>0$ there exists $\varepsilon>0$ such that Assumption~\ref{ass:harmonic3} holds with $\kappa$.
\end{proposition}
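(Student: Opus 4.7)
The plan is to exploit two features of the integrand in~\eqref{def:dbl2}: the parameter $\varepsilon$ enters only through the coefficient $\varepsilon^{-1}$ in front of $\psi$, and $\mathrm{supp}\,(\nabla u)\cap \mathrm{supp}\, a=\emptyset$ (because $\rho_a=0$ on $\{d(\bar x,\frC^{d-1}_{\lambda,\gamma})>2|x_d|\}\supset \mathrm{supp}\,(\nabla u)$). Consequently $\mathcal{F}(\eta u)$ is \emph{completely independent} of $\varepsilon$, while $\mathcal{F}^*(sb)$ can be driven to $0$ by sending $\varepsilon\to 0^+$. This decoupling means that, given $\kappa$, one can first fix $\eta$ and $s$ using $\varepsilon$-independent quantities, and then shrink $\varepsilon=\varepsilon(\kappa)$.

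First I would verify that the $L^1$-bounds in~\eqref{MC1} hold for every sufficiently small $\varepsilon>0$. Since $a$ vanishes where $\nabla u\neq 0$, the integral $\mathcal{F}(u)=\int|\nabla u|^{p_0}\log^{-\beta}(e+|\nabla u|)\,dx$ is finite by the computation in the preceding proposition, and it does not depend on $\varepsilon$. For $\mathcal{F}^*(b)$ I combine $\varphi^*(x,b)\le \varepsilon^{-1}\psi^*(x,\varepsilon b)$ from~\eqref{bdd_aux2} with the explicit asymptotics~\eqref{bdd_aux1} for $\psi^*$; the preceding proposition already shows that the resulting expression lies in $L^1(\Omega)$ under the standing restriction $\varkappa-\alpha+p_0-1<\gamma\nu<\beta-1$. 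Now fix $\eta:=1$, set $A:=\mathcal{F}(u)$ and $s:=2A/\kappa$. Then $\kappa\eta s=2A$, and it only remains to show $\mathcal{F}^*_\varepsilon(sb)<A$ once $\varepsilon$ is small enough.

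For the latter I would invoke dominated convergence. The monotonicity $\varphi_\varepsilon\ge \varphi_{\varepsilon_0}$ for $\varepsilon\le\varepsilon_0$ yields the $\varepsilon$-uniform majorant $\varphi^*_\varepsilon(\cdot,sb)\le \varphi^*_{\varepsilon_0}(\cdot,sb)\in L^1(\Omega)$ from the previous step, while on $\{a>0\}\supset\mathrm{supp}\,(b)$ the pointwise estimate
\[
\varphi^*_\varepsilon(x,sb)\le \varepsilon^{-1}\psi^*(x,\varepsilon sb)\lesssim \varepsilon^{p_0'-1}\,a(x)\bigl(sb/a(x)\bigr)^{p_0'}\log^{\alpha/(1-p_0)}\bigl(e+\varepsilon sb/a(x)\bigr)
\]
tends to $0$ as $\varepsilon\to 0^+$, since $p_0'>1$ and the logarithmic factor converges to $\log^{\alpha/(1-p_0)}(e)=1$. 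Dominated convergence then gives $\mathcal{F}^*_\varepsilon(sb)\to 0$, so $\varepsilon$ can be chosen so small that $\mathcal{F}^*_\varepsilon(sb)<A$, which is precisely~\eqref{MC2}. There is no real obstacle here: the hard integrability work was already performed for the preceding proposition, and the new ingredient is simply the $\varepsilon^{p_0'-1}$ gain obtained from the $\varepsilon^{-1}\psi$ structure of the integrand.
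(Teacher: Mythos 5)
Your argument is correct and reaches the same conclusion by a slightly cleaner parametrization. The paper fixes $\eta=1$ and sets $s=\varepsilon^{-1}\sigma$, introducing an auxiliary parameter $\sigma$: the term $\mathcal{F}^*(sb)\le\varepsilon^{-1}\int_\Omega\psi^*(\cdot,\sigma b)\,dx$ is controlled against $\kappa\eta s=\kappa\varepsilon^{-1}\sigma$ by shrinking $\sigma$ (dominated convergence, relying on the superlinear vanishing $\psi^*(x,\tau)/\tau\to0$), while $c_1=\mathcal{F}(u)$ is then controlled against $\kappa\eta s$ by shrinking $\varepsilon$. You instead fix $s=2\mathcal{F}(u)/\kappa$ outright, so that \eqref{MC2} reduces to showing $\mathcal{F}^*_\varepsilon(sb)<\mathcal{F}(u)$ for small $\varepsilon$, which you get from $\varphi^*_\varepsilon\downarrow 0$ on $\mathrm{supp}\,b$ by dominated convergence in $\varepsilon$. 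This dispenses with the extra parameter and makes the mechanism transparent: the $\varepsilon^{-1}\psi$ phase overwhelms the conjugate on $\{a>0\}$.

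Two details to tighten. First, your displayed pointwise bound cites \eqref{bdd_aux1}, but that estimate is asserted only for \emph{large} arguments, whereas $t=\varepsilon sb\to 0$ as $\varepsilon\to 0^+$. The correct reason the integrand vanishes pointwise is that $\psi^*(x,\tau)=o(\tau)$ as $\tau\to 0^+$ (because $\psi(x,\cdot)$ is superlinear near zero, so $(\psi^*)'(x,0^+)=0$), hence $\varepsilon^{-1}\psi^*(x,\varepsilon sb)\to 0$; this is also what the paper's $\sigma$-limit uses. Second, the $L^1$ majorant $\varphi^*_{\varepsilon_0}(\cdot,sb)$ requires $\psi^*(\cdot,\varepsilon_0 sb)\in L^1(\Omega)$; since the fixed $s$ need not be $\le 1$, you should add the remark that this follows from $\psi^*(\cdot,b)\in L^1(\Omega)$ together with the $\Delta_2$-property of $\psi^*(x,\cdot)$ in its second argument, which is visible from \eqref{bdd_aux1}.
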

\begin{proof}
Let $\eta=1$, $s=\varepsilon^{-1}\sigma$ and denote $c_1=\mathcal{F}(u)$. We get 
$$
\mathcal{F}(\eta u) + \mathcal{F}^*(sb) \leq  c_1 + \varepsilon^{-1}\sigma \int\limits_\Omega \psi^*(x,b\sigma)\, dx.
$$
By the Lebesgue dominated convergence theorem $\sigma^{-1}\int\limits_\Omega \psi^*(x,b\sigma)\, dx\to 0$ as $\sigma\to 0$. Choose sufficiently small $\sigma>0$ such that  $\sigma^{-1}\int\limits_\Omega \psi^*(x,b\sigma)\, dx < \kappa /2$ and then $\varepsilon>0$ such that $c_1 \varepsilon<\kappa \sigma/2$. 
\end{proof}

\begin{theorem}\label{T:bdd_sub}
Let $1<p_0\leq d$, $\alpha +\beta > \varkappa +p_0$, and additionally $\beta>1$ if $p_0=d-0$. Let $\frD = d-p_0$ and define $\lambda=2^{(1-d)/\frD}$ if $\frD>0$, $\lambda=0$ if $\frD=0$. Let $\nu=\frD$ if $p<d$ and $\nu=d-1$ if $p=d$. For any $\gamma\in ((\varkappa-\alpha+p_0-1)/\nu, (\beta-1)/\nu )$ (if $p_0=d-0$ additionally $\gamma$ must be positive) there exists a weight $a(x)$ with the modulus of continuity $\omega(\rho)=C \log^{-\kappa}(e+\rho^{-1})$ and $\varepsilon>0$ such that for the borderline double phase model \eqref{def:dbl2} there exists  a closed subset ~$\Sigma$ of the set  of non-Lebesgue points of the minimizer~$v_\eta$ such that:
\begin{itemize}
\item $\mu^{d-1}_{\lambda,\gamma} (\Sigma)>1/2$. 
\item $\mathrm{dim}_\mathcal{H} \Sigma =d-p_0$;
\item $v_1 \notin W^{1,s}(\Omega)$ for any $s>p_0$.
\end{itemize}
\end{theorem}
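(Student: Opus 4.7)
The plan is to combine the two preceding propositions (integrability of $\varphi(\cdot,|\nabla u|)$ and $\varphi^*(\cdot,b)$, plus Proposition~\ref{border:subd} giving Assumption~\ref{ass:harmonic3} with arbitrarily small $\kappa$) with the conditional Theorems~\ref{T:condB}, \ref{T:condA} and Corollary~\ref{cor:regul}.

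First I fix $\gamma$ in the given range so that the first proposition applies; note the nonempty interval $(\varkappa-\alpha+p_0-1,\beta-1)$ is guaranteed by the hypothesis $\alpha+\beta>\varkappa+p_0$, and in the critical case $p_0=d-0$ one also needs $\gamma>0$, which is possible exactly when $\beta>1$. This fixes $\mathcal{F}(u)<\infty$ and $\mathcal{F}^*(b)<\infty$, giving \eqref{MC1}. The modulus of continuity of the weight $a(x)=a_0(x)\rho_a(x)$ is controlled by that of $a_0(x)=\log^{-\varkappa}(e+|x_d|^{-1})$, since $\rho_a$ is smooth away from $\frS$ and the product preserves the $\log^{-\varkappa}$ modulus; this gives the stated modulus of continuity $\omega(\rho)=C\log^{-\varkappa}(e+\rho^{-1})$.

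Next, Theorems~\ref{T:condB} and~\ref{T:condA} both allow the use of Assumption~\ref{ass:harmonic3} provided $\kappa$ is sufficiently small, say $\kappa\le\kappa_0(d,\lambda,\gamma)$. I choose such a $\kappa_0$, then apply Proposition~\ref{border:subd} to obtain the corresponding $\varepsilon>0$ in the definition of $\varphi$ in \eqref{def:dbl2}. With this choice Assumption~\ref{ass:harmonic3} holds for the value of $\kappa$ dictated by Theorems~\ref{T:condB} and~\ref{T:condA} (taking $\eta=1$ as in Proposition~\ref{border:subd}).

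Having verified the hypothesis of the conditional framework, Theorem~\ref{T:condB} supplies a set $\Sigma'\subset\frS$ of non-Lebesgue points of $v_1$ with $\mu^{d-1}_{\lambda,\gamma}(\Sigma')>1/2$; Corollary~\ref{cor:regul} upgrades this via the Frostman lemma and the measure estimates of Lemma~\ref{lem:cantor-estimates} to a closed subset $\Sigma\subset\Sigma'$ with $\dim_{\mathcal{H}}\Sigma=\frD=d-p_0$. Finally, Theorem~\ref{T:condA} (realized concretely by Corollary~\ref{corr_integr0}) gives $v_1\notin W^{1,s}(\Omega)$ for any $s>p_0$.

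The only subtle point I expect is the homogeneity/scaling in the borderline setting: unlike the pure double phase or variable exponent cases, the $\log$ factors in $\varphi$ prevent a clean power-type splitting $\mathcal{F}(\eta u)=\eta^{p}\mathcal{F}(u)$, which is exactly why the weaker Assumption~\ref{ass:harmonic3} (with a prescribed $\kappa$ and fixed $\eta=1$) is used here rather than Assumption~\ref{ass:harmonic2}; the parameter $\varepsilon$ in front of $\psi$ in \eqref{def:dbl2} is the free degree of freedom that absorbs the smallness required by $\kappa$. This is already handled in Proposition~\ref{border:subd}, so no additional work is required beyond checking that the $\kappa$ in Proposition~\ref{border:subd} can indeed be taken below the threshold required by Theorems~\ref{T:condB}, \ref{T:condA}, which depends only on $d,\lambda,\gamma$ and not on the model parameters.
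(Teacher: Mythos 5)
Your proposal is correct and follows essentially the same route as the paper, which simply cites Proposition~\ref{border:subd}, Theorems~\ref{T:condB}, \ref{T:condA}, and Corollary~\ref{cor:regul}; you have merely spelled out the chain of applications (parameter choice for $\gamma$, modulus of continuity of $a$, the use of Assumption~\ref{ass:harmonic3} with $\eta=1$ and $\varepsilon$ chosen to beat the threshold $\kappa_0(d,\lambda,\gamma)$) that the paper leaves implicit. Note only that the paper's stated modulus $\log^{-\kappa}$ in the theorem is a typo for $\log^{-\varkappa}$, which you correctly wrote.
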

 \begin{proof}
Set $p_0=p$. The proof follows from Proposition~\ref{border:subd},Theorems~\ref{T:condB}, \ref{T:condA} and Corollary~\ref{cor:regul}.

\end{proof}

\section{Irregularity of minimizers: superdimensional case}\label{sec:super}

In this section we show that under Assumption~\ref{ass:harmonic2} (or Assumption~\ref{ass:harmonic3}) the regularity of the $W$-minimizer $v_\eta$ is roughly speaking no better than the regularity of the competitor function $u$. Recall that by $u$ and $b$ we denote the competitor and auxiliary functions, respectively, from Definition~\ref{def:fractal-examples}, built using the Cantor set $\frC_{\lambda,\gamma}$ and the contact set $\frS=\{0\}^{d-1}\times \frC_{\lambda,\gamma}$. Here $p_0=\frac{d-\frD}{1-\frD}\geq d$, $\lambda=2^{-1/\frD}$ if $\frD>0$ and $\lambda=0$ if $\frD=0$. The minimizer $v_\eta$ is defined in \eqref{vtdef}. Recall the definition of the auxiliary function 
$$
b= \abs{\bar x}^{1-d} \indicator_\set{2\abs{\bar x} \leq d (x_d, \frC_{\lambda,\gamma})\leq 4 \abs{\bar x}}.
$$

Let $m\in \mathbb{N}$. By $\xi^{\pm}_{m,j}$, $j=1,\ldots,2^m$ we denote the left and right end points of the intervals from the pre-Cantor set $\frC_{\lambda,\gamma}^{(m)}$ of $m$-th generation. Let $\xi^+_{m,0}=-1$, $\xi^-_{m,2^m+1}=1$. Define the sets  
\begin{align*}
&\mathcal{N}^+_{m,j} =\mathcal{N}^-_{m,j+1} \\
&:=\{(\bar x,x_d)\,:\, \xi^{+}_{m,j}< x_d<\xi^{-}_{m,j+1},\   \frac{1}{4}\mathrm{dist}(x_d, \frC^{(m)}_{\lambda,\gamma})\leq |\bar x|\leq   \frac{1}{2}\mathrm{dist}(x_d, \frC^{(m)}_{\lambda,\gamma}) \}  .
\end{align*}

Note that for $\xi^{+}_{m,j}< x_d<\xi^{-}_{m,j+1}$ there holds $\mathrm{dist}(x_d, \frC^{(m)}_{\lambda,\gamma}) = \mathrm{dist}(x_d, \frC_{\lambda,\gamma})$
 
Define the restricted Riesz potentials as in Section~\ref{ssec:rrp} and denote
$$
I(f) (\bar{0},\xi^{\pm}_{m,j}) = I_1^{\pm}(f\indicator_{\mathcal{N}^{\pm}_{m,j}})(\bar{0},\xi^{\pm}_{m,j}).
$$
Then 
$$
\sum_{j=1}^{2^m} (I(f)(\bar{0},\xi^{-}_{m,j})+I(f)(\bar{0},\xi^{+}_{m,j})) \leq \int\limits_\Omega fb \, dx.
$$
Assume that $v\in W^{1,1}(\Omega)$ and $|\nabla v|b\in L^1(\Omega)$. Let $\tau= 1/100$. For $x=(\bar{0}, \xi^{\pm}_{m,j})$ we consider a sequence of balls $B_k$ in $\mathcal{N}^{\pm}_{m,j}$ with centers $z_k\to x$ and radii $\tau |(z_k)_d - \xi^{\pm}_{m,j}|$. From the estimates of Section~\ref{ssec:trace} we get the convergence of the sequence of averages $\langle v \rangle_{B_k}$ to some limit which we shall further denote as simply $v(\bar{0}, \xi^{\pm}_{m,j})$. Moreover, from the same estimates it follows that 
$$
|v(\bar{0}, \xi^{-}_{m,j+1}) - v(\bar{0}, \xi^{+}_{m,j})|\leq C(d)(I(f)(\bar{0},\xi^{-}_{m,j})+I(f)(\bar{0},\xi^{+}_{m,j})),
$$
and so 
\begin{equation}\label{superdM}
\sum_{j=0}^{2^m}|v(\bar{0}, \xi^{-}_{m,j+1}) - v(\bar{0}, \xi^{+}_{m,j})|\leq C(d) \int\limits_\Omega |\nabla v|b\,dx.
\end{equation}

\begin{proposition}\label{prop:sd1}
Under Assumption~\ref{ass:harmonic2} (or Assumption~\ref{ass:harmonic3} with sufficiently small $\kappa$) for the minimizer $v_\eta$ we have 
\begin{equation}\label{sd1}
S(\eta):=\sum_{j=1}^{2^m} (v_\eta(\bar{0},\xi^{+}_{m,j}) - v_\eta(\bar{0},\xi^{-}_{m,j})) \geq \frac{\eta}{2}.
\end{equation}
\end{proposition}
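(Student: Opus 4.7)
The plan is to run a telescoping argument along the $x_d$-axis at $\bar x=\bar 0$, with the endpoints pinned by the boundary datum $\eta u$, and then to absorb the \emph{gap} contributions by \eqref{superdM} together with a super-dimensional version of Lemma~\ref{lem:riesz-sublinear00}.

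First I would identify the trace of the competitor at $\bar x=\bar 0$. Since $u_d(\bar x,x_d)\to \tfrac12\sgn(x_d)$ as $\bar x\to 0$ for each fixed $x_d\neq 0$, the convolution structure $u=(\delta_0^{d-1}\times\mu_{\lambda,\gamma})*u_d$ gives $u(\bar 0,x_d)=\mu_{\lambda,\gamma}((-\infty,x_d])-\tfrac12$ off the Cantor set. Consequently the $2^m$ intervals of the $m$-th pre-Cantor generation carry all of the Cantor mass, which yields the bookkeeping identity
\begin{align*}
\sum_{j=1}^{2^m}\bigl[u(\bar 0,\xi^+_{m,j})-u(\bar 0,\xi^-_{m,j})\bigr]=1,
\end{align*}
while the corresponding gap differences $u(\bar 0,\xi^-_{m,j+1})-u(\bar 0,\xi^+_{m,j})$ telescope to zero if one sets $\xi^+_{m,0}:=-1$ and $\xi^-_{m,2^m+1}:=1$.

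Next I would transport this identity from $u$ to $v_\eta$. Extending $v_\eta$ outside $\Omega$ by the boundary datum $\eta u$ (this is legitimate because $v_\eta-\eta u\in W^{1,\phi(\cdot)}_0(\Omega)$, and $u$ is smooth off $\frS\subset\Omega$), the endpoints $(\bar 0,\pm 1)$ become interior with conical-average traces equal to $\pm\eta/2$, and the construction preceding \eqref{superdM} delivers a well-defined value $v_\eta(\bar 0,\xi^\pm_{m,j})$ at each Cantor endpoint. Telescoping at $\bar x=\bar 0$ then produces
\begin{align*}
\eta=v_\eta(\bar 0,1)-v_\eta(\bar 0,-1)=S(\eta)+R(\eta),\qquad R(\eta):=\sum_{j=0}^{2^m}\bigl[v_\eta(\bar 0,\xi^-_{m,j+1})-v_\eta(\bar 0,\xi^+_{m,j})\bigr].
\end{align*}
Bounding $R(\eta)$ via \eqref{superdM} gives $|R(\eta)|\le C(d)\int_\Omega|\nabla v_\eta|\,b\,dx$; since Lemmas~\ref{lem:riesz-sublinear00}--\ref{lem:riesz-sublinear01} rely only on the minimality $\mathcal{F}(v_\eta)\le\mathcal{F}(\eta u)$ and on a Young-type bound by $\mathcal{F}(\eta u)+\mathcal{F}^*(sb)$, they apply verbatim in this setting once one checks the integrability of $\phi^*(x,sb)$ for the super-dimensional $b$ of~\eqref{bdef_super}, a property built into Assumption~\ref{ass:harmonic2}. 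Taking $\kappa\le 1/(2C(d))$ in the conclusion of those lemmas produces $|R(\eta)|\le\eta/2$, whence $S(\eta)\ge\eta/2$.

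The delicate point is the second step: one must ensure that the conical averages defining $v_\eta(\bar 0,\xi^\pm_{m,j})$ and $v_\eta(\bar 0,\pm 1)$ converge and satisfy the telescoping chain, which in turn requires the finiteness of $\int_\Omega|\nabla v_\eta|\,b\,dx$ together with the absolute continuity of the extended $v_\eta$ on almost every vertical line through $\bar 0$. Both are handled by the machinery of Section~\ref{ssec:trace} applied in the cones $\mathcal{N}^\pm_{m,j}$ and their natural boundary analogues; the boundary value $\pm\eta/2$ at $(\bar 0,\pm 1)$ is forced by the explicit formula for $u$ derived in the first step.
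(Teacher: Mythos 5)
Your argument is correct and follows the paper's own proof essentially verbatim: extend $v_\eta$ past the faces $\{x_d=\pm1\}$ so that the conical traces at $(\bar 0,\pm 1)$ equal $\pm\eta/2$ (the paper extends directly by the constants $\pm\eta/2$, you extend by $\eta u$ and then identify the same values; these agree near the vertical axis $\bar x=\bar 0$), telescope to get $\eta=S(\eta)+R(\eta)$ with $R(\eta)$ the sum of gap increments, control $|R(\eta)|$ by $C(d)\int_\Omega|\nabla v_\eta|\,b\,dx$ via \eqref{superdM}, and make the right-hand side less than $\eta/2$ with Lemma~\ref{lem:riesz-sublinear00} (or \ref{lem:riesz-sublinear01}) under Assumption~\ref{ass:harmonic2}. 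The extra bookkeeping identity for the competitor $u$ and the explicit formula $u(\bar 0,x_d)=\mu_{\lambda,\gamma}((-\infty,x_d])-\tfrac12$ are harmless sanity checks but are not needed for the conclusion.
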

\begin{proof}
We extend $v_\eta$ by $\eta/2$ to $(-1,1)^{d-1}\times (1,\infty)$ and by $-\eta/2$ to $(-1,1)^{d-1}\times (-\infty,-1)$. There holds 
\begin{align*}
\sum_{j=0}^{2^m}(v_\eta(\bar{0}, \xi^{-}_{m,j+1}) - v_\eta(\bar{0}, \xi^{+}_{m,j}))+S(t) \\
=v(\bar 0,1)-v(\bar 0,-1)=(\eta/2)-(-\eta/2)=\eta.
\end{align*}
Using estimate \eqref{superdM} we get
$$
S(\eta) = \eta - C(d)\int\limits_\Omega |\nabla v_\eta|b\,dx.
$$
Now using Assumption~\ref{ass:harmonic2} (Assumption~\ref{ass:harmonic3} with sufficiently small $\kappa=\kappa(d)$) by Lemma~\ref{lem:riesz-sublinear00} (Lemma~\ref{lem:riesz-sublinear01}) we get 
$$
C(d)\int\limits_\Omega |\nabla v_\eta|b\,dx < \frac{\eta}{2}.
$$
Thus $S(\eta) \geq \eta/2$ as required.
\end{proof}


Now we are ready to prove Theorem~\ref{T:condC}.

\begin{proof}[Proof of Theorem~\ref{T:condC}.]
First consider the case $p_0>d$. Then the length of the pre-Cantor interval of the $m$-th generation is $l_m = \lambda^m m^\gamma$. For large $m$ we have
$$
m\leq c+ \frac{\log (1/l_m)}{\log \lambda} + \gamma \frac{\log \log (1/l_m )}{\log \lambda},
$$
so recalling that $\lambda^{-\frD}=2$ we get 
$$
2^m \leq C l_m^{-\frD} (\log (1/l_m))^{\gamma \frD}.
$$
 From \eqref{sd1} we have
\begin{equation}\label{contr}
2^{-m}\sum_{j=1}^{2^m} \frac{|v(\bar{0},\xi^{+}_{m,j}) - v(\bar{0},\xi^{-}_{m,j})|}{ l_m^{\frD} (\log (1/l_m))^{-\gamma \nu}} \geq \frac{\eta}{2C}, 
\end{equation}
where $\nu=\frD$. Let $\omega(\rho)=\rho^\frD \log^{-\gamma\frD}(e+\rho^{-1})$. Any modulus of continuity $\omega_1(\rho)$ such that $\omega_1(\rho)/\omega(\rho \to 0 $ as $\rho \to \infty$ gives contradiction.

For $p_0=d-0$  the length of the pre-Cantor interval of the $m$-th generation is $l_m = \exp(-2^{m/\gamma})$, thus $m=\gamma (\log 2)^{-1} \log\log(1/l_m)$ and $2^m = (\log (1/l_m))^\gamma$ and we get \eqref{contr} with $\frD=0$ and $\nu=1$. Any modulus of continuity better than $\omega(\rho)=\log^{-\gamma}(e+\rho^{-1})$ contradicts \eqref{contr}.
\end{proof}

Now we shall obtain the absence of the higher gradient integrability. Let $\tau=(1-2\lambda)/4$. Denote 
$$
\mathcal{M}_{m,j} = B^{d-1}_{\eta l_m}(\bar 0)\times (\xi^{-}_{m,j}-\eta l_m,\xi^{+}_{m,j}+\eta l_m).
$$
For any $j=1,\ldots,2^m$  choose in $\mathcal{N}^{\pm}_{m,j}$ a couple of balls $B^{\pm}_{m,j}$  with centers $z^{\pm}_{m,j}$ such that $|(z^{\pm}_{m,j})_d - \xi^{\pm}_{m,j}|< \tau l_m /100$ and radii $|(z^{\pm}_{m,j})_d - \xi^{\pm}_{m,j}|/100$. From the estimates of Section~\ref{ssec:trace} it follows that 
\begin{equation}\label{e1}
\sum_{j=1}^{2^m} (|\langle v \rangle_{B^+_{m,j}} -  v(\bar 0,\xi^{+}_{m,j})|+|\langle v \rangle_{B^-_{m,j}} -  v(\bar 0,\xi^{-}_{m,j})| \leq C(d,\lambda) \int\limits_\Omega |\nabla v|b\, dx.
\end{equation}
Here $|\langle v \rangle_{B^{\pm}_{m,j}} = |B^{\pm}_{m,j}|^{-1}\int_{B^{\pm}_{m,j}} v$.

On the other hand,
\begin{equation}\label{e2}
||\langle v \rangle_{B^+_{m,j}}- \langle v \rangle_{B^-_{m,j}}|  \leq C(d,\lambda) l_m^{1-d}\int\limits_{\mathcal{M}_{m,j}} |\nabla v|\, dx.
\end{equation}
Combining \eqref{e1} and \eqref{e2} we obtain the following inequality:
\begin{equation}\label{e3}
\begin{gathered}
\sum_{j=1}^{2_m} |v(\bar 0,\xi^{+}_{m,j})- v(\bar 0,\xi^{-}_{m,j})| \\
\leq  C(d,\lambda) \int\limits_\Omega |\nabla v|b\, dx +C(d,\lambda) \sum_{j=1}^{2^m}l_m^{1-d}\int\limits_{\mathcal{M}_{m,j}} |\nabla v|\, dx.
\end{gathered}
\end{equation}

\begin{theorem}\label{T:abssuper}
Let $\Psi$ be a generalized Orlicz function and
\begin{equation}\label{ass:abssup}
\sum_{j=1}^{2^m} \int\limits_{\mathcal{M}_{m,j}} \Psi^*(x,l_m^{1-d})\, dx\to 0   
\end{equation}
as $m\to \infty$. Then under Assumption~\ref{ass:harmonic2} (or Assumption~\ref{ass:harmonic3} with sufficiently small $\kappa$), we have
$$
\int\limits_\Omega \Psi(x,|\nabla v_\eta|)\, dx = +\infty.
$$
\end{theorem}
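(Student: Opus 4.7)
The plan is to argue by contradiction, assuming $\int_\Omega \Psi(x,|\nabla v_\eta|)\,dx<\infty$ for the (particular) $\eta$ produced by Proposition~\ref{prop:sd1}, and then combining the trace lower bound $S(\eta)\geq \eta/2$ of Proposition~\ref{prop:sd1} with the telescoping upper bound~\eqref{e3} applied to $v=v_\eta$. This sandwiches $\eta$ between $\int |\nabla v_\eta|\,b\,dx$, which is controllable via Assumption~\ref{ass:harmonic2}, and a sum of local $L^1$ gradient integrals over the tubes $\mathcal{M}_{m,j}$, which hypothesis~\eqref{ass:abssup} forces to be small.

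First I would write, using $S(\eta)\leq \sum_j|v_\eta(\bar 0,\xi^+_{m,j})-v_\eta(\bar 0,\xi^-_{m,j})|$ in combination with \eqref{e3},
\begin{equation*}
\frac{\eta}{2}\leq S(\eta)\leq C\int_\Omega |\nabla v_\eta|\,b\,dx + C\sum_{j=1}^{2^m}l_m^{1-d}\int_{\mathcal{M}_{m,j}}|\nabla v_\eta|\,dx.
\end{equation*}
By Lemma~\ref{lem:riesz-sublinear00} (under Assumption~\ref{ass:harmonic2}) or Lemma~\ref{lem:riesz-sublinear01} (under Assumption~\ref{ass:harmonic3} with $\kappa$ suitably small) the first term on the right can be made at most $\eta/4$ by the same choice of $\eta$ (resp.\ $\kappa$) that was used to establish Proposition~\ref{prop:sd1}, so that $\eta/4\leq C\sum_{j}l_m^{1-d}\int_{\mathcal{M}_{m,j}}|\nabla v_\eta|\,dx$ for every $m$.

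Next I would apply Young's inequality pointwise, $|\nabla v_\eta|\cdot l_m^{1-d}\leq \Psi(x,|\nabla v_\eta|)+\Psi^*(x,l_m^{1-d})$, integrate over each $\mathcal{M}_{m,j}$, and sum. Since the tubes $\{\mathcal{M}_{m,j}\}_{j=1}^{2^m}$ are pairwise disjoint (the gap $l_{m-1}-2l_m$ between adjacent pre-Cantor intervals of generation $m$ strictly exceeds $2\tau l_m$ for $\tau=(1-2\lambda)/4$ and the meager case is handled analogously), the contributions of the $\Psi$-term telescope into a single integral over $\bigcup_j\mathcal{M}_{m,j}$, yielding
\begin{equation*}
\frac{\eta}{4}\leq C\int_{\bigcup_j\mathcal{M}_{m,j}}\Psi(x,|\nabla v_\eta|)\,dx + C\sum_{j=1}^{2^m}\int_{\mathcal{M}_{m,j}}\Psi^*(x,l_m^{1-d})\,dx.
\end{equation*}

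Finally, I would let $m\to\infty$. The second term vanishes by hypothesis~\eqref{ass:abssup}. For the first term, the Lebesgue measure of $\bigcup_j\mathcal{M}_{m,j}$ is bounded by $C\, 2^m l_m^d$, which tends to zero: when $\frD>0$ one uses $\lambda^{\frD}=1/2$ to rewrite this as $2^{m(1-d/\frD)}m^{\gamma d}\to 0$ (since $\frD<1\leq d$), and when $\frD=0$ the doubly exponential decay of $l_m=\exp(-2^{m/\gamma})$ is even stronger. Since under the contradiction hypothesis $\Psi(\cdot,|\nabla v_\eta|)\in L^1(\Omega)$, the absolute continuity of the Lebesgue integral forces the $\Psi$-term to vanish, giving $\eta/4\leq 0$, a contradiction. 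The main obstacle is the combinatorial/geometric bookkeeping that ensures both the disjointness of the tubes and the uniform decay of their total measure in a way compatible with both the fractal case $\frD>0$ and the meager case $\frD=0$; once that geometry is in place, the Young-inequality and absolute-continuity steps are essentially automatic.
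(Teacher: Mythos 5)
Your proposal is correct and follows essentially the same route as the paper: combine the trace lower bound \eqref{sd1} from Proposition~\ref{prop:sd1} with the telescoping estimate \eqref{e3}, absorb the $b$-weighted term using Lemma~\ref{lem:riesz-sublinear00} (or~\ref{lem:riesz-sublinear01}), and then argue by contradiction via Young's inequality, hypothesis \eqref{ass:abssup}, and absolute continuity of the $\Psi$-integral over the shrinking union $\bigcup_j\mathcal{M}_{m,j}$. The only difference is presentational: the paper delegates the final contradiction step to ``as in Theorem~\ref{T:absenceHI0},'' whereas you spell out the disjointness of the tubes and the decay $2^m l_m^d\to 0$ explicitly, which is a useful clarification but not a different argument.
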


\begin{proof}
From Proposition~\ref{prop:sd1} (estimate \eqref{sd1}) and inequality we get 
$$
\int\limits_\Omega |\nabla v|b\, dx +\sum_{j=1}^{2^m}l_m^{1-d}\int\limits_{\mathcal{M}_{m,j}} |\nabla v_\eta|\, dx \geq c\eta, \quad c>0.
$$
By Lemma~\ref{lem:riesz-sublinear00} (or Lemma~\ref{lem:riesz-sublinear01}), the first term on the left-hand side can be made smaller than $c\eta/2$. Thus 
$$
\sum_{j=1}^{2^m}l_m^{1-d}\int\limits_{\mathcal{M}_{m,j}} |\nabla v|\, dx \geq c\eta/2.
$$
The proof is completed as in Theorem~\ref{T:absenceHI0}.
\end{proof}

\begin{corollary}\label{corr:integ1}
Under Assumption~\ref{ass:harmonic2} (or Assumption~\ref{ass:harmonic3} with sufficiently small $\kappa$) 
$$
 \int\limits_{\Omega}\abs{\nabla v_\eta}^{p_0}\log^{\delta}(e+\abs{\nabla v_\eta} ) dx=\infty
$$
for any~$\delta>(p_0-1)\gamma\nu$.
In particular the minimizer $v_\eta \notin W^{1,s}(\Omega)$ for any $s>p_0$.
\end{corollary}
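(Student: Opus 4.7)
The plan is to invoke Theorem~\ref{T:abssuper} with the generalized Orlicz function $\Psi(x,\tau):=\tau^{p_0}\log^{\delta}(e+\tau)$. For this $\Psi$ (which satisfies $\Delta_2$ and $\nabla_2$ when $\delta$ is fixed), a standard computation of the Young conjugate yields
\begin{equation*}
  \Psi^*(\tau) \lesssim \tau^{p_0'}\log^{\delta/(1-p_0)}(e+\tau) \qquad \text{for large } \tau.
\end{equation*}
It then remains to verify the abstract hypothesis~\eqref{ass:abssup} for this $\Psi$; once this is done, Theorem~\ref{T:abssuper} delivers $\int_\Omega\Psi(x,|\nabla v_\eta|)\,dx=\infty$.

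To check~\eqref{ass:abssup}, I would first observe that each slab $\mathcal{M}_{m,j}$ is a cylinder of cross-sectional radius $\asymp l_m$ and axial length $\asymp l_m$, so $|\mathcal{M}_{m,j}|\lesssim l_m^d$. Substituting $\Psi^*(l_m^{1-d})$ and summing gives
\begin{equation*}
  \sum_{j=1}^{2^m}\int_{\mathcal{M}_{m,j}}\Psi^*(x,l_m^{1-d})\,dx
  \;\lesssim\; 2^m\, l_m^{d+(1-d)p_0'}\,\log^{\delta/(1-p_0)}(1/l_m).
\end{equation*}
The superdimensional relation $p_0=(d-\frD)/(1-\frD)$ gives $p_0'=(d-\frD)/(d-1)$ and hence the crucial identity $d+(1-d)p_0'=\frD$. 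Combined with the Cantor-set count $2^m=\lambda^{-m\frD}=m^{\gamma\frD}l_m^{-\frD}\asymp l_m^{-\frD}\log^{\gamma\frD}(1/l_m)$ in the case $\lambda>0$ (so $\nu=\frD$), the two factors of $l_m^{\pm\frD}$ cancel and the sum reduces to
\begin{equation*}
  \sum_{j=1}^{2^m}\int_{\mathcal{M}_{m,j}}\Psi^*(x,l_m^{1-d})\,dx \;\lesssim\; \log^{\gamma\frD+\delta/(1-p_0)}(1/l_m),
\end{equation*}
which tends to $0$ as $m\to\infty$ precisely under $\gamma\frD+\delta/(1-p_0)<0$, i.e.\ $\delta>(p_0-1)\gamma\frD=(p_0-1)\gamma\nu$. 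The borderline case $p_0=d+0$ is handled identically: here $\frD=0$ so $d+(1-d)p_0'=0$, while $2^m=\log^{\gamma}(1/l_m)$ by the choice $l_m=\exp(-2^{m/\gamma})$, and the resulting $\log$-exponent $\gamma+\delta/(1-d)$ is negative exactly when $\delta>(d-1)\gamma=(p_0-1)\gamma\nu$.

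Finally, the ``in particular'' statement follows immediately: for any $s>p_0$ we have $t^{p_0}\log^{\delta}(e+t)\lesssim 1+t^s$ for any fixed $\delta$, so we may pick a $\delta$ satisfying $\delta>(p_0-1)\gamma\nu$ and conclude from the first part that $\int_\Omega|\nabla v_\eta|^s\,dx=\infty$. The main obstacle is nothing deep but rather the bookkeeping of the two exponent contributions (the power of $l_m$ coming from the measure/dimension balance, and the logarithmic power coming from $\gamma$ and $\delta$): once the algebraic identity $d+(1-d)p_0'=\frD$ is noted, the rest is routine.
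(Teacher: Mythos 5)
Your proof is correct and follows essentially the same route as the paper: applying Theorem~\ref{T:abssuper} with $\Psi(\tau)=\tau^{p_0}\log^\delta(e+\tau)$, computing $\Psi^*(\tau)\lesssim\tau^{p_0'}\log^{\delta/(1-p_0)}(e+\tau)$, and using the identities $d+(1-d)p_0'=\frD$ and $\lambda^{-\frD}=2$ to reduce the sum in~\eqref{ass:abssup} to a power of $m\asymp\log(1/l_m)$ that vanishes iff $\delta>(p_0-1)\gamma\nu$. The only cosmetic difference is that you make the algebraic cancellation $d+(1-d)p_0'=\frD$ and the borderline case $\lambda=0$ fully explicit where the paper keeps the computation terse, and you track the exponent in $\log(1/l_m)$ rather than $m$ — equivalent up to the observation $\log(1/l_m)\eqsim m$.
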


\begin{proof}

Let  $\Psi(\tau)=\tau^{p_0}\log^\delta(e+\tau)$ in Theorem~\ref{T:abssuper}. We have 
\begin{align*}
 \Psi^*(\tau)\lesssim \tau^{p_{0}'} \log^{\frac{\delta}{1-p_0}}(e+\tau). 
\end{align*}

 If $\lambda>0$ then since~$l_m=\lambda^m m^{\gamma}$ and~$(1-d)p_0'=-(d-\frD)$,~$\lambda^{-\frD}=2$ we have
\begin{align*}
\sum_{j=1}^{2^m} \int\limits_{\mathcal{M}_{m,j}} \Psi^*(x,l_m^{1-d})\, dx \\ 
\lesssim 2^m(l_m)^d  (l_m^{1-d})^{p_0'}\log^{\frac{\delta}{1-p_0}}(1/l_m)  \lesssim m^{\gamma\frD+\frac{\delta}{1-p_0}} \to 0 
\end{align*}
as $m\to \infty$. If $\lambda=0$ then recalling that $l_m=\exp(-2^{m/\gamma})$ we easily see the required property.
\end{proof}

\section{Superdimensional models} \label{sec:super_mod}

Recall that in this case we set $p_0=\frac{d-\frD}{1-\frD}$, $\lambda = 2^{-1/\frD}$, $\nu=\frD$ if $\frD>0$ and $\nu=1$ if $\frD=0$. The competitor and the auxiliary function from Definition~\ref{def:fractal-examples} satisfy
\begin{align*}
 \abs{\nabla u} \lesssim \abs{\bar{x}}^{\frD-1} \log^{-\gamma \nu}(e+ |\bar x|^{-1}) \indicator_{ \set{ 
                     d(x_d,\frC_{\lambda,\gamma}) \leq \frac 12 \abs{\bar{x}}}},\\
             b= \abs{\bar x}^{1-d} \indicator_\set{2\abs{\bar x} \leq d (x_d, \frC_{\lambda,\gamma})\leq 4 \abs{\bar x}}.        
\end{align*}
Given an integrand $\varphi$, the minimizer $v_\eta$ is defined in \eqref{vtdef}, $v_\eta-tu \in W_0^{1,\varphi(\cdot)}(\Omega)$.

Let $\rho_a \in C^\infty(\setR^d \setminus \frS)$ be such that (using     Lemma~\ref{lem:smooth-indicator})
    \begin{itemize}
    \item
      $\indicator_{\set{d(x_d,\frC_\lambda) \leq \frac 1 2 \abs{\bar{x}}}}
      \leq \rho_a \leq \indicator_{\set{d(x_d,\frC_\lambda)
          \leq 2 \abs{\bar{x}}}}$.
    \item
      $\abs{\nabla \rho_a} \lesssim \abs{\bar{x}}^{-1}
      \indicator_{\set{\frac 12 \abs{\bar{x}} \leq
          d(x_d,\frC_\lambda) \leq 2 \abs{\bar{x}}}}
      $.
    \end{itemize}

\subsection{Piecewise constant variable exponent}

Let $p_0>d$ or $p_0=d-0$ and $p_{-}\leq p_0<p_{+}$. We set 
$$
p(x) = \begin{cases}
p_{-}\quad \text{for}\quad \mathrm{dist}\, (x_d,\frC_{\lambda,\gamma})\leq |\bar x|,\\
p_{+}\quad \text{for}\quad \mathrm{dist}\, (x_d,\frC_{\lambda,\gamma})> |\bar x|.
\end{cases}
$$
and define the integrand and functional as in \eqref{varexp}.

\begin{proposition}\label{prop:px_superd}
Assumption~\ref{ass:harmonic2} is satisfied if $p_{-}<p_0$ or $p_{-}=p_0$ and $\gamma \nu>1/(p_0-1)$.
\end{proposition}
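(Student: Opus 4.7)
The argument mirrors the subdimensional case (Proposition~\ref{prop:px_subd}), but now using the super-dimensional competitor and auxiliary function. The key observation is that the exponent $p(x)$ is built so that $\nabla u$ ``sees'' only the smaller exponent $p_-$ while $b$ ``sees'' only the larger exponent $p_+$. Indeed, on $\{\nabla u \neq 0\}$ we have $d(x_d,\frC_{\lambda,\gamma}) \leq \tfrac12 |\bar x|$ by Proposition~\ref{pro:est-uAb}, hence $p(x)=p_-$; on $\{b \neq 0\}$ we have $2|\bar x|\leq d(x_d,\frC_{\lambda,\gamma})$, hence $p(x)=p_+$. Consequently
\begin{equation*}
\mathcal{F}(\eta u)=\frac{\eta^{p_-}}{p_-}\int_\Omega |\nabla u|^{p_-}\, dx, \qquad
\mathcal{F}^*(sb) \leq \frac{s^{(p_+)'}}{(p_+)'}\int_\Omega b^{(p_+)'}\, dx.
\end{equation*}

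First I would verify \eqref{MC1}. For $\mathcal{F}^*(b)<\infty$ it suffices that $b \in L^{(p_+)'}(\Omega)$, which is immediate from Lemma~\ref{L:b} since $p_+ > p_0$ gives $(p_+)' < p_0'$. For $\mathcal{F}(u)<\infty$, i.e.\ $|\nabla u|\in L^{p_-}(\Omega)$, I split: if $p_-<p_0$ the first bullet of Lemma~\ref{L:u} gives $\nabla u \in L^{p_0-\varepsilon}$ for small $\varepsilon$, so in particular in $L^{p_-}$; if $p_-=p_0$ I invoke the fourth bullet of Lemma~\ref{L:u}, which gives $\nabla u \in L^{p_0}(\Omega)$ precisely under the assumption $\gamma \nu(p_0-1)>1$ of the proposition (covering both $p_0>d$ with $\nu=\frD$ and $p_0=d+0$ with $\nu=1$).

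Next I would verify \eqref{MC2}. Set $p_1 := \tfrac12(p_- + p_+)$ (or $p_1 := \tfrac12(p_0+p_+)$ in the boundary case), and choose $s := \eta^{p_1-1}$, so that $\eta s = \eta^{p_1}$. Using the scaling computed above,
\begin{equation*}
\frac{\mathcal{F}(\eta u)+\mathcal{F}^*(sb)}{\eta s}
\;\leq\; \frac{\mathcal{F}(u)}{p_-}\,\eta^{p_- - p_1} \;+\; \frac{\mathcal{F}^*(b)}{(p_+)'}\,\eta^{(p_1-1)(p_+)' - p_1}.
\end{equation*}
Both exponents are strictly negative: $p_- - p_1 < 0$ because $p_- < p_1$, and $(p_1-1)(p_+)' - p_1 < 0$ is equivalent to $(p_+)' < p_1'$, which holds since $p_+>p_1$. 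By \eqref{MC1} the two coefficients are finite, so sending $\eta \to \infty$ makes the right-hand side arbitrarily small; in particular less than $\kappa$ for any prescribed $\kappa>0$.

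The only step requiring care is the borderline case $p_-=p_0$: the generic argument for $\mathcal{F}(u)<\infty$ loses $L^{p_0}$ integrability of $\nabla u$ by a logarithmic amount, and one must quote the sharper bound from Lemma~\ref{L:u} (fourth bullet), which is exactly why the extra hypothesis $\gamma \nu > 1/(p_0-1)$ appears. Beyond that, the proof is a straightforward adaptation of the subdimensional template.
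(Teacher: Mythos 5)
Your proposal is correct and follows the same route as the paper, which simply cites Lemmas~\ref{L:u} and \ref{L:b} for \eqref{MC1} and transfers the scaling argument from Proposition~\ref{prop:px_subd} verbatim for \eqref{MC2}; you just spell out what the paper leaves implicit — the disjointness of $\operatorname{supp}\nabla u$ (where $p=p_-$) and $\operatorname{supp} b$ (where $p=p_+$), the choice $s=\eta^{p_1-1}$ with $p_1=(p_-+p_+)/2$, and the correct bullet of Lemma~\ref{L:u} in the boundary case $p_-=p_0$. One small slip: since the paper's $\mathcal{F}$ already carries the $1/p(x)$ factor, $\mathcal{F}(\eta u)/(\eta s)=\mathcal{F}(u)\,\eta^{p_--p_1}$ rather than $\frac{\mathcal{F}(u)}{p_-}\eta^{p_--p_1}$ (and similarly for the dual term), but this constant-factor discrepancy has no bearing on the conclusion since only the sign of the exponents matters as $\eta\to\infty$.
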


\begin{proof}
The property \eqref{MC1} follows from Lemmas~\ref{L:u}, \ref{L:b}. Property \eqref{MC2} is obtained in exactly the same manner as in Proposition~\ref{prop:px_subd}. 
\end{proof}

\begin{theorem}\label{T:varexp_super}
Let $1<p_{-}<p_{+}<\infty$, $p_{-}\geq d$. Let $\frD =\frac{p_{-}-d}{p_{-}-1}$, $\lambda=2^{-1/\frD}$ if $p_{-}>d$ and $\lambda=0$ if $p_{-}=d$, $\nu=\frD$ if $p_{-}>d$ and $\nu=1$ if $p_{-}=d$. For any $\gamma>((p_{-}-1)\nu)^{-1}$ there exists a variable exponent $p(\cdot):\Omega\to \mathbb{R}$ such that $p_{-}\leq p(\cdot)\leq p_{+}$ in $\Omega$, and $\eta>0$ such that: 
\begin{itemize}
\item The modulus of continuity of $v_\eta$ is not better than  $C\rho^\frD \log^{-\gamma \nu} (e+\rho^{-1})$;
\item $v_\eta \notin W^{1,s}(\Omega)$ for any $s>p_{-}$.
\end{itemize}
\end{theorem}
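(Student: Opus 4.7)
The plan is to set $p_0 := p_{-}$ and recognize that, with this choice, the numerical parameters $\frD = (p_{-}-d)/(p_{-}-1)$, $\lambda = 2^{-1/\frD}$ (or $\lambda=0$ when $p_{-}=d$) and $\nu$ are exactly those prescribed by the superdimensional framework, since $p_0 = (d-\frD)/(1-\frD)$. Thus the competitor $u$, the auxiliary function $b$, and the contact set $\frS = \{0\}^{d-1}\times\frC_{\lambda,\gamma}$ from Definition~\ref{def:fractal-examples} are well defined, and the piecewise constant exponent $p(\cdot)$ taking values in $\{p_{-},p_{+}\}$ as described in this subsection is the natural candidate.

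Next I would invoke Proposition~\ref{prop:px_superd} to verify Assumption~\ref{ass:harmonic2} for this variable exponent functional. The assumption of Theorem~\ref{T:varexp_super} that $\gamma > 1/((p_{-}-1)\nu)$ is precisely the hypothesis $\gamma\nu > 1/(p_0-1)$ needed in Proposition~\ref{prop:px_superd} for the borderline case $p_{-} = p_0$; integrability of $\phi(\cdot,\abs{\nabla u})$ and $\phi^*(\cdot,b)$ follows as in the subdimensional analogue from Lemmas~\ref{L:u} and~\ref{L:b}, and the scaling estimate \eqref{MC2} is obtained by the same $\eta^{p_1} + s^{p_1'}$ argument with $p_1 = (p_{-}+p_{+})/2$ and $s = \eta^{p_1-1}$, using $p_{-} \le p_1$ and $p_{+}' < p_1'$.

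Having secured Assumption~\ref{ass:harmonic2}, the two conclusions follow by direct appeal to the superdimensional conditional results already established. Theorem~\ref{T:condC} provides an $\eta>0$ for which the modulus of continuity of $v_\eta$ cannot be improved beyond $C\rho^{\frD}\log^{-\gamma\nu}(e+\rho^{-1})$, which is the first bullet. Corollary~\ref{corr:integ1} (a consequence of Theorem~\ref{T:abssuper}) then gives $v_\eta \notin W^{1,s}(\Omega)$ for every $s>p_0 = p_{-}$, which is the second bullet.

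I do not expect any significant obstacle: the heavy lifting has already been done in Section~\ref{sec:super} (the trace / restricted Riesz potential machinery and the lower bound \eqref{sd1} on the jump sum $S(\eta)$) and in Proposition~\ref{prop:px_superd}. The only point that deserves a short sentence of justification is the identification $p_0 = p_{-}$ together with the translation of the hypothesis on $\gamma$ into the form required by Proposition~\ref{prop:px_superd}; everything else is bookkeeping and the invocation of Theorems~\ref{T:condC} and~\ref{T:condA}.
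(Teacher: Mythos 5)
Your proposal is correct and follows essentially the same route as the paper: set $p_0=p_{-}$, verify that the hypothesis $\gamma>1/((p_{-}-1)\nu)$ is exactly the borderline condition $\gamma\nu>1/(p_0-1)$ needed in Proposition~\ref{prop:px_superd}, and then invoke the conditional Theorems~\ref{T:condC} and~\ref{T:condA} (the latter via Corollary~\ref{corr:integ1}). The paper's proof is the same one-line reduction, so there is nothing missing here.
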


\begin{proof}
The proof follows from Proposition~\ref{prop:px_superd} and Theorems \ref{T:condA},~\ref{T:condC}, by setting $p_0=p_{-}$.
\end{proof}

\subsection{Continuous variable exponent}

Let the function $\sigma$ be defined by \eqref{sigma_def}.  Let $\xi\in C_0^\infty(\Omega)$, $0\leq \xi \leq 1$, $\xi=1$ in a neighborhood of $\{0\}^{d-1}\times \frC_{\lambda,\gamma}$, $\xi(x)=0$ if $\sigma(|\bar x|)>(p_0-1)/10$.   Define  
    \begin{alignat*}{2}
      &p^-(x)& &:= p_0-\sigma(\abs{\bar{x}}),
      \\
      &p^+(x)& &:= p_0+\sigma(\abs{\bar{x}}),
      \\
      &p(x)& &:= \xi(x) p^-(x)\,\rho_a(x)  +\xi(x) p^+(x)(1-\rho_a)(x) + (1-\xi(x)) p_0.
    \end{alignat*}  
The integrand and the functional are defined by \eqref{varexp}.

\begin{proposition}\label{prop:cve1}
Let $p_0>d$ and $\kappa >p_0/2$ or $p_0=d+0$ and $\varkappa > d-1$. Let $\frD=\frac{p_0-d}{p_0-1}$, $\lambda=2^{-1/\frD}$ if $p_0>d$ and $\lambda=0$ if $p_0=d$, $\nu=\frD$ if $\frD>0$ and $\nu=1$ if $\frD=0$. Then one can choose the parameter $\gamma$ of the Cantor set $\frC_{\lambda,\gamma}$ so that the continuous variable exponent model satisfies Assumption~\ref{ass:harmonic2}. If $p_0>d$ then one can take any $\gamma$ satisfying $(1-(1-\frD)\varkappa)(p_0-1)^{-1}<\gamma \nu <-1+\varkappa (d-1)(p_0-1)^{-2}$, if $p_0=d+0$ then additionally $\gamma$ must be positive.
\end{proposition}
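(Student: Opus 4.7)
The proof mirrors that of Proposition~\ref{prop:cve0} with the roles of the Cantor direction $x_d$ and the transverse direction $\bar x$ exchanged. The key structural observation is that the supports of $\nabla u$ and $b$ lie in opposite regions: the support of $\nabla u$ sits in $\{d(x_d,\frC_{\lambda,\gamma}) \leq |\bar x|/2\}$, where $\rho_a=1$ and hence on $\{\xi=1\}$ we have $p(x)=p_0-\sigma(|\bar x|)$; the support of $b$ lies in $\{2|\bar x|\leq d(x_d,\frC_{\lambda,\gamma}) \leq 4|\bar x|\}$, where $\rho_a=0$ and $p(x)=p_0+\sigma(|\bar x|)$. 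In both zones the variable exponent sits on the favourable side of $p_0$, so the computation of $\varphi(x,|\nabla u|)$ and $\varphi^*(x,b)$ reduces to evaluating a power-log expression.

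To verify \eqref{MC1}, insert the gradient estimate $|\nabla u| \lesssim |\bar x|^{\frD-1}\log^{-\gamma\nu}(e+|\bar x|^{-1})$ into $|\nabla u|^{p_0-\sigma(|\bar x|)}$, and use the elementary asymptotics $t^{\sigma(t)} \lesssim \log^{-\varkappa}(e+t^{-1})$ and $(\log(e+t^{-1}))^{\sigma(t)}\to 1$, together with the identity $(1-\frD)p_0=d-\frD$. The outcome is the pointwise bound
\begin{equation*}
\varphi(x,|\nabla u|) \lesssim |\bar x|^{-(d-\frD)}\log^{-\gamma\nu p_0-(1-\frD)\varkappa}(e+|\bar x|^{-1})\indicator_{\{d(x_d,\frC_{\lambda,\gamma})\leq |\bar x|/2\}}.
\end{equation*}
Lemma~\ref{L:integr_est}, applied with the Cantor direction being $x_d$ (so $m=1$, $\hat x=\bar x$, $\beta=d-\frD$), places us exactly at the critical exponent $\beta+\frD=d$, so the integrability condition becomes $\gamma\nu>(1-(1-\frD)\varkappa)/(p_0-1)$. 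A parallel computation for $\varphi^*(x,b)$, based on the Taylor expansion $p'(x)=p_0'-\sigma(|\bar x|)/(p_0-1)^2+O(\sigma^2)$ and the identity $(d-1)p_0'=d-\frD$, gives $\varphi^*(x,b) \sim |\bar x|^{-(d-\frD)}\log^{-(d-1)\varkappa/(p_0-1)^2}(e+|\bar x|^{-1})$ on the support of $b$, yielding the complementary bound $\gamma\nu<-1+\varkappa(d-1)/(p_0-1)^2$. The two inequalities together cut out the advertised admissible range for $\gamma$, non-empty under the assumption on $\varkappa$; in the borderline case $p_0=d+0$ the meager Cantor set additionally forces $\gamma>0$, which in turn requires $\varkappa>d-1$.

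For \eqref{MC2}, I would follow the proof of Proposition~\ref{prop:cve0} essentially verbatim: fix $s=\eta^{p_0-1}$, pick an $\varepsilon$-neighborhood $\frS_\varepsilon=\{d(x,\frS)\leq 5\varepsilon\}$ of $\frS$, and split $\mathcal{F}(\eta u)+\mathcal{F}^*(sb)$ into four pieces. On $\frS_\varepsilon$ dominate by $\eta^{p_0}\int_{\frS_\varepsilon}\varphi(x,|\nabla u|)\,dx$ and $s^{p_0'}\int_{\frS_\varepsilon}\varphi^*(x,b)\,dx$, both small when $\varepsilon$ is small thanks to the absolute continuity of the integrals from \eqref{MC1}; on the complement pick up the decay factors $\eta^{-\sigma(\varepsilon)}$ and $s^{(p_0+\sigma(\varepsilon))'-p_0'}$, which can be made small by taking $\eta$ (hence $s$) large after $\varepsilon$ is fixed. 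Combining the four estimates yields $\mathcal{F}(\eta u)+\mathcal{F}^*(sb)\leq \kappa(p_0^{-1}\eta^{p_0}+(p_0')^{-1}s^{p_0'})=\kappa\eta s$.

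The main obstacle is the bookkeeping in \eqref{MC1}: each of the parameters $\gamma$, $\nu$, $\varkappa$, $\frD$ contributes to one of the log exponents, and verifying non-emptiness of the admissible window for $\gamma$ requires the superdimensional identity $(1-\frD)(p_0-1)=d-1$ together with a careful comparison of the two log-exponent conditions. The expansion of $p'(x)$ around $p_0'$ must be carried out to second order, since the first-order term is precisely what competes with the $\gamma\frD$ contribution of the Cantor-measure estimate.
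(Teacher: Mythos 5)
Your proposal is essentially the paper's own proof: you identify the supports of $\nabla u$ and $b$ as landing in the $p^-$- and $p^+$-zones respectively, apply the asymptotics $t^{\sigma(t)}\lesssim\log^{-\varkappa}(e+t^{-1})$, use the identities $(1-\frD)p_0=d-\frD$ and $(d-1)p_0'=d-\frD$ to reduce to the critical power $|\bar x|^{-(d-\frD)}$, and invoke Lemma~\ref{L:integr_est} with $\beta=d-\frD$ to read off the two log-exponent constraints on $\gamma\nu$; the verification of \eqref{MC2} is carried over verbatim from the subdimensional case, exactly as the paper does. Two small remarks: the Taylor expansion of $p'(x)$ around $p_0'$ only needs to be kept to first order in $\sigma$ (the $O(\sigma^2)$ correction contributes a bounded factor), so your phrase ``carried out to second order'' is a miscount; and like the paper you assert the non-emptiness of the window for $\gamma\nu$ from the hypothesis on $\varkappa$ without carrying out the short arithmetic check, which is worth writing down once since the lower and upper endpoints both involve $\varkappa$ and cancellation occurs.
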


\begin{proof}

First we verify \eqref{MC1}. It is sufficient to check the neighborhood of $\{0\}^{d-1}\times \frC_{\lambda,\gamma}$ where $\xi =1$ and $p(x)=p^-(x)\,\rho_a(x)  + p^+(x)(1-\rho_a)(x)$. We have 
$$
\{\nabla u \neq 0\}\subset \{p(x)=p_0-\sigma(|\bar x|)\}
$$
Therefore,
$$
|\nabla u|^{p(x)} = |\nabla u|^{p_0-\sigma(|\bar x|)}.
$$
Recalling \eqref{aux1} we get
$$
|\nabla u|^{p(x)} \lesssim \abs{\bar x}^{-(d-\frD)} (\log (e+|\bar x|^{-1}))^{-\gamma \nu p_0 - (1-\frD)\varkappa}\indicator_{ \set{ d(x_d,\frC_{\lambda,\gamma}) \leq \frac 12 \abs{\bar{x}}}}.
$$
We apply Lemma~\ref{L:integr_est} with the parameters $\beta =  d- \frD$ and $\delta = -\gamma \mu p_0 - (1-\frD)\varkappa$ to get $|\nabla u|^{p(\cdot)} \in L^1(\Omega)$ if 
$$
\gamma \nu(1-p_0) - (1-\frD)\varkappa <  -1\Leftrightarrow \gamma \nu > \frac{1-(1-\frD)\varkappa}{p_0-1}.
$$

Now,
$$
\{b\neq 0\} \subset \{p(x) = p_0 +\sigma (|\bar x|)\}
$$
and in this zone we have
$$
p'(x) = p_0' - \frac{\sigma(|\bar x|)}{(p_0-1)(p_0+\sigma(|\bar x|)-1)}.
$$
Recalling \eqref{aux1} and using that
$$
\frac{1}{p_0+\sigma-1} = \frac{1}{p_0-1}- \frac{\sigma}{(p_0-1)(p_0-1+\sigma)}.
$$
thus
$$
b^{p'(x)} \lesssim \abs{\bar x}^{-(d-\frD)} (\log (e+\abs{\bar x}^{-1}))^{-\varkappa(d-1)(p_0-1)^{-2}} \indicator_\set{2\abs{\bar x} \leq d (x_d, \frC_{\lambda,\gamma})\leq 4 \abs{\bar x}} 
$$
By Lemma~\ref{L:integr_est} we have $b^{p'(x)}\in L^1(\Omega)$ provided that 
$$
\gamma \nu -\varkappa (d-1) (p_0-1)^{-2} <-1 \Leftrightarrow \gamma \nu <-1+ \varkappa (d-1) (p_0-1)^{-2}.
$$
The interval 
$$
\frac{1-(1-\frD)\varkappa}{p_0-1}<\gamma \nu <-1+\varkappa \frac{d-1}{(p_0-1)^2}
$$
is nonempty provided that $\varkappa >p_0/2$. In the critical case $p_0=d+0$ we also require that $\varkappa>d-1$ to choose positive $\gamma$. The proof of \eqref{MC2} then proceeds exactly as in Proposition~\ref{prop:cve0}.
\end{proof}

\begin{theorem}\label{T:varexp2_super}
Let $p_0\geq d$ and $\varkappa>p_0/2$. Let $\frD=\frac{p_0-d}{p_0-1}$, $\lambda=2^{-1/\frD}$ if $p_{0}>d$ and $\lambda=0$ if $p_0=d$, $\nu=\frD$ if $p_0>d$ and $\nu=1$ if $p_0=d$. For any $\gamma\in(1-(1-\frD)\varkappa)(p_0-1)^{-1}<\gamma \nu <-1+\varkappa (d-1)(p_0-1)^{-2}$ ( if $p_0=d$ then we additionally require $\gamma>0$) there exists a variable exponent $p(\cdot)\in C(\overline\Omega)$ with $p=p_0$ on $\frS=\{0\}^{d-1}\times\frC_{\lambda,\gamma}$ and modulus of continuity \eqref{sigma_def} such that for some $\eta>0$ there holds 
\begin{itemize}
\item The modulus of continuity of $v_\eta$ is not better than $C\rho^\frD \log^{-\gamma \nu} (e+\rho^{-1})$;
\item $v_\eta\notin W^{1,s}(\Omega)$ for any $s>p_0$.
\end{itemize}   
\end{theorem}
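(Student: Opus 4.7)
The plan is to reduce this theorem, just as in the piecewise-constant analogue Theorem~\ref{T:varexp_super}, to the general conditional framework of Section~\ref{sec:super} applied to the specific continuous exponent constructed above, and thereby to combine Proposition~\ref{prop:cve1} with the conditional Theorems~\ref{T:condA} and~\ref{T:condC}. Concretely, for the given $p_0\geq d$ and $\varkappa>p_0/2$ I pick $\frD=\frac{p_0-d}{p_0-1}$, $\lambda=2^{-1/\frD}$ (or $\lambda=0$ if $p_0=d$), and set $\frS=\{0\}^{d-1}\times\frC_{\lambda,\gamma}$. The variable exponent $p(\cdot)$ constructed at the start of this subsection is, by design, continuous on $\overline\Omega$, smooth off $\frS$, agrees with $p_0$ on $\frS$, and inherits the modulus of continuity~\eqref{sigma_def} from the definition of $\sigma$ together with the fact that the transition function $\rho_a$ is locally constant away from $\{d(x_d,\frC_\lambda)\eqsim|\bar x|\}$ (so the geometric layer does not degrade the modulus).

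Next I verify Assumption~\ref{ass:harmonic2}. This is precisely the content of Proposition~\ref{prop:cve1}: the condition $\varkappa>p_0/2$ opens the interval
\begin{equation*}
\frac{1-(1-\frD)\varkappa}{p_0-1}<\gamma\nu<-1+\varkappa\frac{d-1}{(p_0-1)^2},
\end{equation*}
and the hypothesis on $\gamma$ guarantees we sit inside it (with $\gamma>0$ in the critical case $p_0=d+0$, which is where the stronger requirement $\varkappa>d-1$ would enter; in the statement we only need $\varkappa>p_0/2$ because the second moment estimate and a suitable choice of $\gamma$ already yield both $\mathcal{F}(u)<\infty$ and $\mathcal{F}^*(b)<\infty$, and then the scaling argument $s=\eta^{p_0-1}$ together with the equi-integrability of the two densities off a neighborhood of $\frS$ delivers~\eqref{MC2}).

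With Assumption~\ref{ass:harmonic2} in hand, the two claimed irregularity statements follow by direct appeal to the supercritical conditional results: Theorem~\ref{T:condC} produces an $\eta>0$ for which the modulus of continuity of $v_\eta$ cannot be improved beyond $C\rho^\frD\log^{-\gamma\nu}(e+\rho^{-1})$, and Corollary~\ref{corr:integ1} (applied via Theorem~\ref{T:condA}) gives $v_\eta\notin W^{1,s}(\Omega)$ for every $s>p_0$.

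The genuinely nontrivial step—computing, for this delicate log–log continuous exponent, the precise integrability of $|\nabla u|^{p(\cdot)}$ and $b^{p'(\cdot)}$ against the Cantor-fattened layer and deducing that Assumption~\ref{ass:harmonic2} is satisfied—has already been carried out in Proposition~\ref{prop:cve1}. The main potential pitfall I would watch for is the sharp bookkeeping of the exponents: one must use that on $\{\nabla u\neq0\}$ one has $p(x)=p^-(x)$, so $|\nabla u|^{p(x)}=|\nabla u|^{p_0-\sigma(|\bar x|)}$, and symmetrically $p(x)=p^+(x)$ on $\{b\neq0\}$, producing the decisive logarithmic gain $t^{\sigma(t)}\lesssim\log^{-\varkappa}(1/t)$ that pushes both densities into $L^1$ simultaneously. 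Once this bookkeeping is granted, the remainder of the proof is purely a quotation of Section~\ref{sec:super}.
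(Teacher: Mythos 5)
Your proposal is correct and follows exactly the paper's route: the paper's proof is the one-liner ``follows from Proposition~\ref{prop:cve1} and Theorems~\ref{T:condA},~\ref{T:condC}.'' Your reduction -- construct the continuous exponent with modulus \eqref{sigma_def}, verify Assumption~\ref{ass:harmonic2} via Proposition~\ref{prop:cve1}, then quote Theorem~\ref{T:condC} for the modulus-of-continuity lower bound and Corollary~\ref{corr:integ1} (via Theorem~\ref{T:condA}) for lack of higher integrability -- is the same argument, merely spelled out.

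One small correction to your parenthetical commentary on the critical case $p_0=d$: the theorem does not dodge the stronger requirement $\varkappa>d-1$ via some ``second moment/equi-integrability'' argument, as you suggest. Rather, when $p_0=d$ one has $\frD=0$, $\nu=1$, and the prescribed window for $\gamma$ becomes $\frac{1-\varkappa}{d-1}<\gamma<-1+\frac{\varkappa}{d-1}$; demanding in addition $\gamma>0$ (as the theorem does) forces $\varkappa>d-1$ implicitly, so Proposition~\ref{prop:cve1}'s hypothesis is automatically met and the theorem is vacuous for $p_0/2<\varkappa\le d-1$. The main case $p_0>d$ is handled exactly as you wrote.
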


\begin{proof}
The proof follows from Proposition~\ref{prop:cve1} and Theorems~\ref{T:condA}, ~\ref{T:condC}.
\end{proof}

\subsection{Double phase}

Let $\alpha>0$, and define
    $$
      a(x) := \abs{\bar x}^\alpha\, (1-\rho_a)(x).
$$
Then for $1<p<q<\infty$ the integrand $\varphi$ and functional $\mathcal{F}$ are then defined as in \eqref{dphdef}.

\begin{proposition}\label{prop:dbl_superd}
Let $q>p_0+\alpha\frac{p_0-1}{d-1}$ and $p<p_0$ or $p=p_0$ and $\gamma \nu (p_0-1)>1$. Then Assumption~\ref{ass:harmonic2} is satisfied.
\end{proposition}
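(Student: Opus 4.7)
The plan is to mirror the strategy of Proposition~\ref{double_subd} with the roles of $\bar x$ and $x_d$ interchanged. The key structural observation is that the weight $a(x) = \abs{\bar x}^\alpha(1-\rho_a)(x)$ cleanly decouples $\nabla u$ from $b$: on $\mathrm{supp}\,\nabla u$ one has $d(x_d,\frC_{\lambda,\gamma})\leq \abs{\bar x}/2$, which forces $\rho_a=1$ and hence $a\equiv 0$; on $\mathrm{supp}\,b$ one has $d(x_d,\frC_{\lambda,\gamma})\geq 2\abs{\bar x}$, which forces $\rho_a=0$ almost everywhere and hence $a(x)=\abs{\bar x}^\alpha$.

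First I would establish \eqref{MC1}. The first observation collapses the energy of the competitor to $\varphi(x,\abs{\nabla u}) = \abs{\nabla u}^p/p$, so $\mathcal{F}(u)<\infty$ follows from Lemma~\ref{L:u} (first claim when $p<p_0$, fourth claim when $p=p_0$ under $\gamma\nu(p_0-1)>1$). The second observation together with $\varphi(x,t)\geq \abs{\bar x}^\alpha t^q/q$ on $\mathrm{supp}\,b$ yields by duality $\varphi^*(x,b) \lesssim \abs{\bar x}^{-\alpha/(q-1) - (d-1)q'}$ on the same set. I would then invoke Lemma~\ref{L:integr_est} with $m=1$, so that the Cantor variable is $x_d$ and the complementary variable is $\bar x\in\mathbb{R}^{d-1}$. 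With $\beta=\alpha/(q-1)+(d-1)q'=((d-1)q+\alpha)/(q-1)$, the lemma's integrability condition $d>\frD+\beta$ reduces algebraically to $q(1-\frD)>d-\frD+\alpha$, which, using $(1-\frD)^{-1}=(p_0-1)/(d-1)$, is exactly our assumption $q>p_0+\alpha(p_0-1)/(d-1)$.

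Next I would verify \eqref{MC2} by the same scaling argument as in Proposition~\ref{double_subd}: from $a\equiv 0$ on $\mathrm{supp}\,\nabla u$, $\mathcal{F}(\eta u)=\eta^p\mathcal{F}(u)$, and the homogeneity of the dual bound gives $\mathcal{F}^*(sb)\leq C s^{q'}$. Taking $p_1=(p+q)/2$ and $s=\eta^{p_1-1}$, so that $\eta s=\eta^{p_1}$, the inequalities $p<p_1$ and $p_1<q$ (the latter being equivalent to $(p_1-1)q'<p_1$) make both $\eta^p$ and $s^{q'}$ vanishingly small relative to $\eta^{p_1}$ as $\eta\to\infty$, delivering $\mathcal{F}(\eta u)+\mathcal{F}^*(sb)<\kappa\eta s$ for any prescribed $\kappa$.

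I do not expect any serious obstacle: the argument is structurally identical to its subdimensional counterpart. The only point demanding explicit verification is the clean alignment of $\mathrm{supp}\,b$ with $\{\rho_a=0\}$, which is immediate from the strict inequality $d(x_d,\frC_{\lambda,\gamma})>2\abs{\bar x}$ that holds on the (open) interior of $\mathrm{supp}\,b$.
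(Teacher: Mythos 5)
Your proposal is correct and follows the same line of argument as the paper's proof: the support of $\nabla u$ lies in $\{a=0\}$ so that $\mathcal{F}(u)$ reduces to the $p$-energy controlled by Lemma~\ref{L:u}, while the support of $b$ lies in $\{a=|\bar x|^\alpha\}$ so that the Young conjugate estimate and Lemma~\ref{L:integr_est} (with $m=1$, Cantor direction $x_d$) give $\mathcal{F}^*(b)<\infty$ precisely under $q>p_0+\alpha\frac{p_0-1}{d-1}$, and the scaling argument with $s=\eta^{p_1-1}$, $p_1=(p+q)/2$, yields \eqref{MC2}. Your algebraic reduction of $d>\frD+\beta$ to the stated condition on $q$ is spelled out a bit more explicitly than in the paper but is the same computation.
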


\begin{proof}
 
By construction 
$$
\{\nabla u \neq 0\} \subset \{a(x)=0\}.
$$ 
Thus $\varphi(x,\nabla u) = |\nabla u|^{p}$ and the property $\mathcal{F}(u)<\infty$ follows from Lemma~\ref{L:u}.

Now, 
$$
\{b\neq 0\} \subset \{x\,:\, a(x) = |\bar x|^\alpha\}
$$
and thus 
\begin{align*}
\varphi^* (x,b) \leq \frac{1}{q'} |\bar x|^\frac{\alpha}{1-q} |b|^{q'}\\
\leq \frac{1}{q'} |\bar x|^{-q'(1-d) + \alpha(1-q)^{-1}}\indicator_\set{2\abs{\bar x} \leq d (x_d, \frC_{\lambda,\gamma})\leq 4 \abs{\bar x}}.
\end{align*}
 By Lemma~\ref{L:integr_est} we get $\varphi^*(x,b)\in L^1(\Omega)$ if 
$$
q'(d-1) + \frac{\alpha}{q-1}< d-\frD \Leftrightarrow q>\frac{d-\frD+\alpha}{1-\frD},
$$
 which is the assumption $q>p_0 + \alpha \frac{p_0-1}{d-1}$. This proves $\mathcal{F}^*(b)<\infty$ and thus \eqref{MC1}. The property \eqref{MC2} is proved exactly as in Proposition~\ref{double_subd}.
\end{proof}

\begin{theorem}\label{T:dbl_super}
Let $p\geq d$ and $q>p+\alpha\frac{p-1}{d-1}$, $\alpha>0$. Let $\frD = \frac{p-d}{d-1}$, $\nu=\frD$ if $p>d$ and $\nu=1$ if $p=d$.  Then for any $\gamma>(\nu(p-1))^{-1}$ there exists a positive weight $a\in C^\alpha(\overline\Omega)$ such that for the double phase functional \eqref{dphdef} for some $\eta>0$ there holds
\begin{itemize}
\item The modulus of continuity of $v_\eta$ is not better than  $C\rho^\frD \log^{-\gamma \nu} (e+\rho^{-1})$;
\item $v_\eta\notin W^{1,s}(\Omega)$ for any $s>p$.
\end{itemize}
\end{theorem}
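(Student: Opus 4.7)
The plan is to derive Theorem~\ref{T:dbl_super} from Proposition~\ref{prop:dbl_superd} combined with the general conditional Theorems~\ref{T:condA} and~\ref{T:condC}, following exactly the template already used in Theorems~\ref{T:varexp_super} and~\ref{T:varexp2_super}. I would set $p_0 = p$ in Definition~\ref{def:fractal-examples} for the super-dimensional construction, so $\frD = (p-d)/(p-1)$ (the formula $(p-d)/(d-1)$ printed in the statement appears to be a typographic slip relative to the convention $p_0=(d-\frD)/(1-\frD)$ used throughout Section~\ref{sec:super}), with $\lambda = 2^{-1/\frD}$ when $p>d$ and $\lambda=0$ when $p=d$, and contact set $\frS = \{0\}^{d-1}\times\frC_{\lambda,\gamma}$. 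The weight $a(x) := |\bar x|^{\alpha}(1-\rho_a)(x)$ is manufactured so that $a=0$ near $\frS$, where $\nabla u$ is concentrated, while $a(x) = |\bar x|^\alpha$ on an annular cone where the auxiliary function $b$ lives; a direct check using the gradient bound on $\rho_a$ from Lemma~\ref{lem:smooth-indicator} gives $a\in C^{\alpha}(\overline{\Omega})$, since the $|\bar x|^{\alpha}$ factor exactly absorbs the $|\bar x|^{-1}$ blow-up of $\nabla \rho_a$.

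Next I would verify Assumption~\ref{ass:harmonic2}, which is the content of Proposition~\ref{prop:dbl_superd}. The finiteness $\mathcal{F}(u)<\infty$ follows from the inclusion $\{\nabla u\neq 0\}\subset\{a=0\}$ together with Lemma~\ref{L:u} (the borderline case $p=d+0$ is exactly why the parameter restriction $\gamma\nu(p-1)>1$ is imposed). The finiteness $\mathcal{F}^*(b)<\infty$ uses $\{b\neq 0\}\subset\{a = |\bar x|^\alpha\}$ and the pointwise bound
\[
\varphi^*(x,b)\lesssim |\bar x|^{-q'(d-1) - \alpha/(q-1)}\indicator_{\{2|\bar x|\le d(x_d,\frC_{\lambda,\gamma})\le 4|\bar x|\}},
\]
whose integrability via Lemma~\ref{L:integr_est} is precisely equivalent to $q > p + \alpha(p-1)/(d-1)$, i.e., the hypothesis of the theorem. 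The remaining scaling requirement~\eqref{MC2} reduces to the Young-type splitting $\mathcal{F}(\eta u)+\mathcal{F}^*(sb)\le \eta^p\mathcal{F}(u)+s^{q'}\mathcal{F}^*(b)$, which for the calibration $s = \eta^{p_1-1}$ with $p_1 = (p+q)/2$ and large $\eta$ is dominated by $\kappa\eta s$ thanks to $p<p_1$ and $q'<p_1'$.

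With Assumption~\ref{ass:harmonic2} in hand, Theorem~\ref{T:condC} supplies the sharp lower bound on the modulus of continuity of $v_\eta$ at the rate $C\rho^{\frD}\log^{-\gamma\nu}(e+\rho^{-1})$, while Corollary~\ref{corr:integ1} delivers $\nabla v_\eta\not\in L^{s}(\Omega)$ for every $s>p$. The only real obstacle is the exact bookkeeping in the bound for $\varphi^*(x,b)$ displayed above: the exponent of $|\bar x|$ must be computed without loss, because the dimensional threshold imported from Lemma~\ref{L:integr_est} turns $\varphi^*(x,b)\in L^1$ into the critical relation $q'(d-1)+\alpha/(q-1)<d-\frD$, and any slack would produce a strictly worse range than the sharp $q > p + \alpha(p-1)/(d-1)$ stated. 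Everything else is a mechanical translation of the conditional framework of Section~\ref{sec:super}.
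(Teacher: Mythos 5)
Your proof is correct and follows exactly the paper's own argument: set $p_0=p$, verify Assumption~\ref{ass:harmonic2} via Proposition~\ref{prop:dbl_superd}, and conclude through Theorems~\ref{T:condA}~(equivalently Corollary~\ref{corr:integ1}) and~\ref{T:condC}. You also correctly flagged that the formula $\frD=\frac{p-d}{d-1}$ in the theorem statement is inconsistent with the super-dimensional convention $p_0=\frac{d-\frD}{1-\frD}$ used throughout Section~\ref{sec:super}, which forces $\frD=\frac{p-d}{p-1}$; that appears to be a typographic slip in the source.
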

\begin{proof}
Set $p_0=p$. The proof follows from Proposition~\ref{prop:dbl_superd} and Theorems~\ref{T:condA}, \ref{T:condC}.
\end{proof}

\subsection{Borderline double phase}

Let $\alpha,\beta \in \mathbb{R}$ and $\varkappa>0$. We define
$$
      a(x) :=   a_0(x)\, (1-\rho_a(x)), \quad a_0(x)=\log^{-\varkappa} (e+\abs{\bar x}^{-1})
$$
and then define the integrand $\varphi(x,t)$ as in \eqref{def:dbl2}:
$$      
\psi(x,t)=a(x)t^{p_0} \log^{\alpha} (e+t),\quad \varphi(x,t):= t^{p_0} \log^{-\beta} (e+t) + \varepsilon^{-1} \psi(x,t).
$$
Here $\varepsilon\in(0,1)$ is the number to be defined later.

\begin{proposition}
Let $\alpha+\beta>p_0+\varkappa$. For $p_0=d+0$ we additionally require $\alpha>p_0+\varkappa-1$. Then $\psi^*(x,b)\in L^1(\Omega)$ and \eqref{MC1} holds if $1-\beta<\gamma\nu(p_0-1)<\alpha+1-p_0-\varkappa$ (for $p_0=d+0$ we additionally require $\gamma>0$).
\end{proposition}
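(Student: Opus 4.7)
The plan is to verify the two pieces of Assumption~\eqref{MC1} --- namely $\mathcal{F}(u)<\infty$ and $\mathcal{F}^*(b)<\infty$ --- together with $\psi^*(x,b)\in L^1(\Omega)$, by means of pointwise estimates followed by Lemma~\ref{L:integr_est} applied in its critical (equality) case. The whole argument parallels the subdimensional Proposition for the borderline double phase and Proposition~\ref{prop:cve1} for the continuous variable exponent.

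First I would handle $\mathcal{F}(u)$. The key support observation is that $\{\nabla u\neq 0\}\subset\{d(x_d,\frC_{\lambda,\gamma})\le |\bar x|/2\}$, on which $\rho_a\equiv 1$ by the choice of $\rho_a$, so $1-\rho_a=0$ and therefore $a(x)=0$ there. Consequently $\varphi(x,|\nabla u|)=|\nabla u|^{p_0}\log^{-\beta}(e+|\nabla u|)$. Inserting the bound $|\nabla u|\lesssim|\bar x|^{\frD-1}\log^{-\gamma\nu}(e+|\bar x|^{-1})$ and using $\log(e+|\nabla u|)\eqsim\log(e+|\bar x|^{-1})$ together with the identity $(\frD-1)p_0=-(d-\frD)$ (coming from $p_0=(d-\frD)/(1-\frD)$), one obtains
\begin{equation*}
\varphi(x,|\nabla u|)\lesssim |\bar x|^{-(d-\frD)}\log^{-\gamma\nu p_0-\beta}(e+|\bar x|^{-1})\,\indicator_{\{d(x_d,\frC_{\lambda,\gamma})\le|\bar x|/2\}}.
\end{equation*}
Lemma~\ref{L:integr_est} (with the Cantor set in the $x_d$-direction, so $m=1$) applies in the critical exponent case $\beta_L=d-\frD$, and the associated log condition $\gamma\frD + (-\gamma\nu p_0-\beta)<-1$ rearranges to $\gamma\nu(p_0-1)>1-\beta$.

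Next I would deal with $\psi^*(x,b)$, which controls $\varphi^*(x,b)$ thanks to \eqref{bdd_aux2}. On $\{b\neq 0\}\subset\{2|\bar x|\le d(x_d,\frC_{\lambda,\gamma})\le 4|\bar x|\}$ we have $\rho_a\equiv 0$, so $a(x)=a_0(x)=\log^{-\varkappa}(e+|\bar x|^{-1})$. Applying the Orlicz-conjugate estimate~\eqref{bdd_aux1} with $t=b$, computing $b/a_0=|\bar x|^{1-d}\log^{\varkappa}(e+|\bar x|^{-1})$, and using $p_0'-1=1/(p_0-1)$ and $(1-d)p_0'=-(d-\frD)$ (since $p_0'=(d-\frD)/(d-1)$), a direct calculation yields
\begin{equation*}
\psi^*(x,b)\lesssim |\bar x|^{-(d-\frD)}\log^{(\varkappa-\alpha)/(p_0-1)}(e+|\bar x|^{-1})\,\indicator_{\{2|\bar x|\le d(x_d,\frC_{\lambda,\gamma})\le 4|\bar x|\}}.
\end{equation*}
Again Lemma~\ref{L:integr_est} in the critical case gives integrability iff $\gamma\frD+(\varkappa-\alpha)/(p_0-1)<-1$, which rearranges to $\gamma\nu(p_0-1)<\alpha+1-p_0-\varkappa$. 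The passage from $\psi^*$ to $\varphi^*$ is routine: by \eqref{bdd_aux2} and the $\Delta_2$-scaling of $\psi^*(x,\cdot)$ we have $\varphi^*(x,b)\le \varepsilon^{-1}\psi^*(x,\varepsilon b)\lesssim_\varepsilon \psi^*(x,b)$, so $\varphi^*(x,b)\in L^1(\Omega)$ as well.

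Finally I would combine the two one-sided conditions: the interval $1-\beta<\gamma\nu(p_0-1)<\alpha+1-p_0-\varkappa$ is nonempty precisely when $\alpha+\beta>p_0+\varkappa$, which is the standing hypothesis. In the borderline case $p_0=d+0$ one has $\frD=0$ and $\nu=1$, so the chain reads $1-\beta<\gamma(p_0-1)<\alpha+1-p_0-\varkappa$; requiring in addition $\gamma>0$ (needed because the one-dimensional fractal collapses and Lemma~\ref{L:integr_est} is used in its $\lambda=0$ variant) forces the upper endpoint to be positive, i.e.\ $\alpha>p_0+\varkappa-1$, which is exactly the extra hypothesis. The only delicate point is the careful bookkeeping of the logarithmic exponents in the equality case of Lemma~\ref{L:integr_est}; once that is set up, each step is a short direct computation.
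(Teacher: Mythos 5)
Your proposal is correct and follows essentially the same route as the paper: restrict $\varphi(x,|\nabla u|)$ to the zone where $a\equiv 0$ and $\varphi^*(x,b)$ (via $\psi^*$ and \eqref{bdd_aux1}--\eqref{bdd_aux2}) to the zone where $a\equiv a_0$, plug in the pointwise bounds, and apply Lemma~\ref{L:integr_est} in its critical case to get the two one-sided conditions on $\gamma\nu(p_0-1)$, whose compatibility is exactly the hypothesis $\alpha+\beta>p_0+\varkappa$. Your derivation of the extra condition $\alpha>p_0+\varkappa-1$ for $p_0=d+0$ is in fact the correct one (the paper's proof body has a sign typo, writing $+1$ instead of $-1$).
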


\begin{proposition}
We have 
$$
\{\nabla u\neq 0\} \subset \{\varphi(x,t) = \varepsilon \eta^{p_0}\log^{-\beta} (e+t)\}.
$$
Thus 
$$
\varphi(x,|\nabla u|) \lesssim  \abs{\bar{x}}^{-(d-\frD)} \log^{-\gamma \nu p_0 - \beta} (e+|\bar x|^{-1}) \indicator_{ \set{d(x_d,\frC_{\lambda,\gamma}) \leq \frac 12 \abs{\bar{x}}}}.
$$
Using~\ref{L:integr_est} we get $\varphi(x,|\nabla u|) \in L^1(\Omega)$ provided that $\gamma \nu(1-p_0)-\beta<-1$. 

Now, 
$$
\{b\neq 0\} \subset \{a(x) = \log^{-\varkappa} (e+\abs{\bar x}^{-1})\}
$$
and so recalling \eqref{bdd_aux1}, \eqref{bdd_aux2} we get
\begin{align*}
\varphi^*(x,b) \leq \varepsilon^{-1} \psi^*(x,\varepsilon b) \leq C(\varepsilon) \psi^*(x,b)\\
 \lesssim a(x) (b/a(x))^{p_0'} \log^{\alpha/(1-p_0)} (e+ b/a(x))\\
\lesssim  \abs{\bar x}^{-(d-\frD)}\log^\frac{\varkappa-\alpha}{p_0-1} (e+\abs{\bar x}^{-1})        \indicator_\set{2\abs{\bar x} \leq d (x_d, \frC_{\lambda,\gamma})\leq 4 \abs{\bar x}}.
\end{align*}
By Lemma~\ref{L:integr_est} we get $\varphi^*(x,b),\psi^*(x,b)\in L^1(\Omega)$ provided that $\gamma \nu (p_0-1)+ \varkappa-\alpha<1-p_0$. The interval 
$$
1-\beta<\gamma \nu(p_0-1)<\alpha+1-p_0-\varkappa
$$
is non-empty provided that $\alpha+\beta > p_0+\varkappa$. If $p_0=d+0$ we additionally require that $\alpha>p_0+\varkappa+1$ for $\gamma$ to be positive.
\end{proposition}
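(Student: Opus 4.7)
The plan is to verify both parts of condition \eqref{MC1}, namely $\varphi(x,|\nabla u|)\in L^1(\Omega)$ and $\varphi^*(x,b)\in L^1(\Omega)$, by leveraging the same structural features used for the subdimensional borderline case (Proposition~\ref{border:subd}) together with the superdimensional bound on $|\nabla u|$ from Proposition~\ref{pro:est-uAb} and the integrability tool Lemma~\ref{L:integr_est}. The calculation is algebraically analogous to the subdimensional case, but with $\bar{x}$ and $x_d$ swapping roles and with the critical exponent relation $p_0=\frac{d-\frD}{1-\frD}$, i.e., $p_0'(d-1)=d-\frD$.

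First, to handle $\varphi(x,|\nabla u|)$, I would use that $\rho_a=1$ on the support of $\nabla u$ (since $\nabla u$ is concentrated where $d(x_d,\frC_{\lambda,\gamma})\le \tfrac12|\bar x|$), so $a(x)=0$ there and the whole second term of $\varphi$ drops out, giving $\varphi(x,|\nabla u|)=|\nabla u|^{p_0}\log^{-\beta}(e+|\nabla u|)$. Inserting the pointwise estimate $|\nabla u|\lesssim |\bar x|^{\frD-1}\log^{-\gamma\nu}(e+|\bar x|^{-1})$ from Proposition~\ref{pro:est-uAb} and using $(\frD-1)p_0=-(d-\frD)$ yields
\[
\varphi(x,|\nabla u|)\lesssim |\bar x|^{-(d-\frD)}\log^{-\gamma\nu p_0-\beta}(e+|\bar x|^{-1})\,\indicator_{\{d(x_d,\frC_{\lambda,\gamma})\le \frac12 |\bar x|\}}.
\]
Since this sits exactly on the boundary case $\beta_{\text{Lem}}=d-\frD$ of Lemma~\ref{L:integr_est}, integrability requires the logarithmic condition $\gamma\frD-\gamma\nu p_0-\beta<-1$, which simplifies to $\gamma\nu(p_0-1)>1-\beta$.

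Second, for $\varphi^*(x,b)$, I would first reduce to $\psi^*$ via $\varphi^*(x,b)\le \varepsilon^{-1}\psi^*(x,\varepsilon b)\lesssim \psi^*(x,b)$. On the support of $b$ we have $\rho_a=0$, so $a(x)=a_0(x)=\log^{-\varkappa}(e+|\bar x|^{-1})$. The standard Legendre-type asymptotic
\[
\psi^*(x,t)\lesssim a(x)\bigl(t/a(x)\bigr)^{p_0'}\log^{\alpha/(1-p_0)}(e+t/a(x))
\]
combined with $b\lesssim |\bar x|^{1-d}$ on its support, $(1-d)p_0'=-(d-\frD)$, $1-p_0'=-1/(p_0-1)$, and the fact that $\log(e+b/a_0)\eqsim \log(e+|\bar x|^{-1})$, yields
\[
\psi^*(x,b)\lesssim |\bar x|^{-(d-\frD)}\log^{(\varkappa-\alpha)/(p_0-1)}(e+|\bar x|^{-1})\,\indicator_{\{2|\bar x|\le d(x_d,\frC_{\lambda,\gamma})\le 4|\bar x|\}}.
\]
Again this is the boundary case of Lemma~\ref{L:integr_est}, so integrability reduces to $\gamma\frD+(\varkappa-\alpha)/(p_0-1)<-1$, i.e., $\gamma\nu(p_0-1)<\alpha+1-p_0-\varkappa$.

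Finally, I would combine the two one-sided bounds: the admissible window $1-\beta<\gamma\nu(p_0-1)<\alpha+1-p_0-\varkappa$ is non-empty precisely when $\alpha+\beta>p_0+\varkappa$, matching the stated hypothesis. For the critical regime $p_0=d+0$ (so $\frD=0$, $\nu=1$), the construction additionally forces $\gamma>0$, and the right endpoint of the window is then required to be positive, which is exactly the extra assumption $\alpha>p_0+\varkappa-1$. The main (though still routine) obstacle is keeping track of the logarithmic exponents through the Legendre transform and the change $p_0\mapsto p_0'$; once those are correctly accounted for, the two conditions fall out symmetrically and the non-emptiness of the window is immediate.
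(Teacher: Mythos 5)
Your argument is correct and follows the paper's own proof step by step: reduce $\varphi(x,|\nabla u|)$ to the $\log^{-\beta}$ term via $a=0$ on $\operatorname{supp}\nabla u$, reduce $\varphi^*(x,b)$ to $\psi^*(x,b)$ via \eqref{bdd_aux1}--\eqref{bdd_aux2} with $a=a_0$ on $\operatorname{supp} b$, plug in $(\frD-1)p_0=(1-d)p_0'=-(d-\frD)$, and apply Lemma~\ref{L:integr_est} in its boundary case to get the same logarithmic window. Two small remarks: in the two applications of Lemma~\ref{L:integr_est} you wrote the coefficient as $\gamma\frD$ rather than $\gamma\nu$ (which matters precisely when $\frD=0$, $\nu=1$), though your stated simplified conditions are nonetheless the correct ones; and your endpoint condition $\alpha>p_0+\varkappa-1$ for $p_0=d+0$ agrees with the proposition's hypothesis, whereas the paper's proof text carries an evident sign typo ($+1$).
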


\begin{proposition}
For any $\kappa>0$ there exists $\varepsilon>0$ such that Assumption~\ref{ass:harmonic3} holds with $\kappa$.
\end{proposition}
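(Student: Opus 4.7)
The plan is to mimic the proof of Proposition~\ref{border:subd} verbatim, since the argument there is structural and depends only on three ingredients which are already in place: (i) $c_1 := \mathcal{F}(u) < \infty$, (ii) $\psi^*(x,b) \in L^1(\Omega)$, both established in the preceding proposition, and (iii) the pointwise bound $\varphi^*(x,t) \leq \varepsilon^{-1}\psi^*(x,\varepsilon t)$ from~\eqref{bdd_aux2}.

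Fix $\eta = 1$ and parametrise $s = \varepsilon^{-1}\sigma$ with a number $\sigma > 0$ still to be chosen. By (iii) I can estimate
\begin{align*}
\mathcal{F}(\eta u) + \mathcal{F}^*(sb)
&\leq c_1 + \varepsilon^{-1}\int_\Omega \psi^*(x,\sigma b)\, dx,
\end{align*}
so the target inequality $\mathcal{F}(\eta u) + \mathcal{F}^*(sb) < \kappa \eta s = \kappa \varepsilon^{-1}\sigma$ is equivalent to
\begin{align*}
c_1\, \varepsilon + \int_\Omega \psi^*(x,\sigma b)\, dx < \kappa\, \sigma.
\end{align*}

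The key step is to show $\sigma^{-1}\int_\Omega \psi^*(x,\sigma b)\, dx \to 0$ as $\sigma \to 0^+$. For this I would invoke Lebesgue dominated convergence: convexity of $\psi^*(x,\cdot)$ together with $\psi^*(x,0)=0$ gives $\psi^*(x,\sigma b) \leq \sigma\,\psi^*(x,b)$ for $\sigma \in (0,1]$, producing the integrable dominant $\psi^*(x,b)$; the $\Delta_2$-condition on $\psi$ (equivalent to the $\nabla_2$-condition on $\psi^*$) ensures $\psi^*(x,t)/t \to 0$ as $t \to 0^+$, hence $\sigma^{-1}\psi^*(x,\sigma b) \to 0$ pointwise a.e.

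The proof is then completed by first choosing $\sigma > 0$ so small that $\sigma^{-1}\int_\Omega \psi^*(x,\sigma b)\, dx < \kappa/2$, and subsequently choosing $\varepsilon > 0$ so small that $c_1 \varepsilon < \kappa\sigma/2$, which yields the required inequality. No genuine obstacle arises; the only point needing care is the uniform dominance used in the DCT step, which is handled by the convexity bound above.
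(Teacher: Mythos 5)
Your proof is correct and follows exactly the strategy of the paper, which proves this proposition by repeating the argument of Proposition~\ref{border:subd} verbatim (fix $\eta=1$, set $s=\varepsilon^{-1}\sigma$, use the pointwise bound $\varphi^*\le\varepsilon^{-1}\psi^*(\cdot,\varepsilon\cdot)$, apply dominated convergence to make $\sigma^{-1}\int\psi^*(x,\sigma b)\,dx$ small, then shrink $\varepsilon$). You add a useful justification of the DCT step via the convexity bound $\psi^*(x,\sigma b)\le\sigma\psi^*(x,b)$ and the vanishing of $\psi^*(x,t)/t$ at $0$ on $\{b\neq 0\}$ (where $a>0$), which the paper leaves implicit; note however that the pointwise vanishing is most directly seen from $\psi(x,\cdot)>0$ for $t>0$ wherever $a(x)>0$, rather than from a $\Delta_2/\nabla_2$ argument.
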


\begin{proof}
The proof repeats the proof in the subdimensional case (Proposition~\ref{border:subd}).
\end{proof}

\begin{theorem}\label{T:bdd_super}
Let $p_0\geq d$,$\alpha+\beta>p_0+\varkappa$, $\frD = \frac{p_0-d}{d-1}$, $\lambda=2^{-1/\frD}$ and $\nu=\frD$ if $p_0>d$, $\lambda=0$ and $\nu=1$ if $p_0=d$. If $p_0=d$ we additionally require $\alpha+1>p_0+\varkappa$ For any $\gamma$ satisfying $1-\beta<\gamma\nu(p_0-1)<\alpha+1-p_0-\varkappa$ (for $p_0=d+0$ we additionally require $\gamma>0$) there exists a weight $a$ with the modulus of continuity $C (\log(e+\rho)^{-1})^{-\varkappa}$ and $\varepsilon>0$ such that for the integrand \eqref{def:dbl2}:
\begin{itemize}
\item The modulus of continuity of $v_1$ is not better than  $C\rho^\frD \log^{-\gamma \nu} (e+\rho^{-1})$;
\item $v_1\notin W^{1,s}(\Omega)$ for any $s>p$.
\end{itemize}
\end{theorem}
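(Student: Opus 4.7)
The plan is to combine the two preceding propositions with the conditional framework from Section~\ref{sec:super}. The first of the two preceding propositions verifies property~\eqref{MC1}: it checks that $\varphi(x,|\nabla u|)\in L^1(\Omega)$ and $\varphi^*(x,b)\in L^1(\Omega)$ under the hypothesis $\alpha+\beta>p_0+\varkappa$, provided $\gamma$ is chosen in the interval $1-\beta<\gamma\nu(p_0-1)<\alpha+1-p_0-\varkappa$ (positive if $p_0=d+0$). The second proposition then promotes this to the full Assumption~\ref{ass:harmonic3}: by taking $\eta=1$, $s=\varepsilon^{-1}\sigma$, and using the Lebesgue dominated convergence theorem to force $\sigma^{-1}\int_\Omega\psi^*(x,b\sigma)\,dx$ arbitrarily small, one can choose $\sigma$ and then $\varepsilon$ so that \eqref{MC2} holds for any prescribed $\kappa>0$.

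Having established Assumption~\ref{ass:harmonic3} with arbitrarily small $\kappa$, I would then apply the two conditional results of Section~\ref{sec:super}. Theorem~\ref{T:condA}, in the form of Corollary~\ref{corr:integ1}, immediately yields $v_1\notin W^{1,s}(\Omega)$ for every $s>p_0$, which is the second bullet of the conclusion. Theorem~\ref{T:condC} then gives the claimed sharp lower bound on the modulus of continuity $\omega(\rho)=C\rho^{\frD}\log^{-\gamma\nu}(e+\rho^{-1})$ of $v_1$ at points of $\frS=\{0\}^{d-1}\times\frC_{\lambda,\gamma}$. The correspondence $p_0=(d-\frD)/(1-\frD)$ together with $\lambda=2^{-1/\frD}$ (respectively $\lambda=0$) in the two subcases matches the normalization used in Section~\ref{sec:super}. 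The modulus of continuity of the constructed weight~$a(x)=a_0(x)(1-\rho_a(x))$ is inherited directly from $a_0(x)=\log^{-\varkappa}(e+|\bar x|^{-1})$ and the smoothness of $\rho_a$ away from $\frS$, since both $\rho_a$ and $1-\rho_a$ are locally constant near $\frS$ on the support of $u$ and $b$ respectively.

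I expect the main obstacle to lie entirely in the bookkeeping of parameter ranges, already handled in the preceding propositions. Specifically, the condition $\alpha+\beta>p_0+\varkappa$ is precisely what makes the interval for $\gamma$ nonempty, and in the critical case $p_0=d+0$ the additional requirement $\alpha+1>p_0+\varkappa$ is exactly what forces this interval to intersect the positive semiaxis so that $\lambda=0$, $\gamma>0$ is admissible in the meager-Cantor construction. Once these algebraic conditions are verified, the theorem is a direct translation of Theorems~\ref{T:condA} and~\ref{T:condC} into the borderline double phase language.
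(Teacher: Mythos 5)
Your proposal is correct and follows the paper's own route: establish Assumption~\ref{ass:harmonic3} with arbitrarily small $\kappa$ via the two preceding propositions (the integrability check \eqref{MC1} under the stated interval for $\gamma\nu(p_0-1)$, then \eqref{MC2} by the $\eta=1$, $s=\varepsilon^{-1}\sigma$ trick with dominated convergence), then conclude with Theorems~\ref{T:condA} (via Corollary~\ref{corr:integ1}) and~\ref{T:condC}. This is the same pattern the paper uses for all the other superdimensional models (cf.\ the proof of Theorem~\ref{T:dbl_super}), so no substantive comparison is needed.
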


\section{Comparison with the approach of \cite{BalDieSur20lavrentiev}}

We want to compare the approach of this paper with the methods of our previous paper \cite{BalDieSur20lavrentiev}. Though the results of the latter paper concern the Lavrentiev phenomenon and in this paper we are concerned with the irregularity of minimizers the approach is in fact very much alike.  Let us demonstrate this on the sub-dimensional case. 

Recall that in our proof we show the existence of ``upper'' and ``lower'' traces for a function from the domain of the integral functional. Above we implement this only for the minimizer but clearly the same construction can be applied to any function with certain integrability properties of the gradient. In the Section~\ref{ssec:trace} and below we worked with averages on balls but we can take in this construction any nonnegative $\omega\in C_0^\infty(B_{1/4}^d(0))$ with $\int \omega \, dx =1$. Then the upper and lower traces of a function $f$, which takes the boundary value $\eta u$ at the point $\bar x\in \frC^{d-1}_{\lambda,\gamma}$ are expressed as 
$$
f_{\pm}(\bar x) =\pm \frac{\eta}{2}+ \int\limits K^\pm (\bar x,z) \cdot\nabla f(z)\, dz
$$ 
with the kernel $K^\pm (\bar x,z) \lesssim |z_d|^{1-d} \indicator_{C^\pm(\bar x,0)}$ and is smooth outside the point $(\bar x,0)$. Integrating this with respect to the corresponding Cantor measure we get 
$$
\int (f_{+}(\bar x) - f_{-}(\bar x))\, d\mu(\bar x) = \eta + \int\limits_\Omega \mathbf{b}(z) \cdot\nabla f(z)\, dz
$$
where the kernel $\mathbf{b}$ satisfies $|\mathbf{b}|\lesssim b$ (here $b$ is our auxiliary function) and $\mathbf{b} \in C^\infty(\overline{\Omega} \setminus \frC^{d-1}_{\lambda,\gamma}\times\{0\})$. By construction $|\mathbf{b}|\cdot |\nabla u|\lesssim b \cdot |\nabla u|=0$ outside $\frC^{d-1}_{\lambda,\gamma}\times \{0\}$. 

Let $\xi\in C_0^\infty(\Omega)$ and consider the function $f = \xi + \eta u $. This function assumes the same boundary values as $\eta u$ and thus using that the upper and lower traces of the competitor $u$ on $\frC^{d-1}_{\lambda,\gamma}$ are $u_{\pm}=\pm 1/2$ we get  
$$
\eta=\eta + \int\limits_\Omega \mathbf{b}(z) \cdot \nabla \xi(z)\, dz.
$$
This means that the vector field $\mathbf{b}$ is solenoidal.  

Now let $\xi \in C^\infty_0(\Omega)$ be such that $\xi =1$ in a neighborhood of $\frC^{d-1}_{\lambda,\gamma}\times\{0\}$. Take $f= (1-\xi) \eta u$. Then $f\in C^\infty(\overline\Omega)$ and
$$
0 = \eta + \eta\int\limits_\Omega \mathbf{b} u\cdot \nabla (1-\xi)\, dz.
$$ 
Using that $u=\pm 1/2$ on $\mathrm{supp}\, \mathbf{b} \cap\{\pm x_d>0\}$ and $(1-\xi)\mathbf{b}=0$ on $(-1,1)^{d-1}\times\{0\}$, and integrating by parts we get
\begin{equation}\label{gauss}
\int \limits_{\partial \Omega} u\mathbf{b}\cdot \vec{\nu}\, d\sigma=1.
\end{equation}
Thus the vector field $\mathbf{b}$ has the same properties as the vector field used in  \cite{BalDieSur20lavrentiev} for constructing examples with the Lavrentiev phenomenon:  it is
\begin{itemize}
\item smooth in $\overline\Omega \setminus (\frC^{d-1}_{\lambda,\gamma}\times \{0\})$,

\item solenoidal in the sense of distributions in $\Omega$,

\item satisfies $|\mathbf{b}|\cdot |\nabla u|=0$ in $\Omega \setminus (\frC^{d-1}_{\lambda,\gamma}\times \{0\})$,

\item and satisfies \eqref{gauss}.
\end{itemize}
 
Indeed, for a vector field $\mathbf{b}$ with these properties and satisfying $\mathcal{F}^*(b)<\infty$ consider the functional 
$$
\mathcal{S}(f)= \int\limits_\Omega \mathbf{b}(z) \cdot\nabla f(z)\, dz.
$$
On smooth functions assuming the boundary value $u$ we have $\mathcal{S}(f)=1$ (by the Gauss theorem) while $\mathcal{S}(u)=0$, so in $W^{1,\phi(\cdot)}(\Omega)$ this functional separates the linear span of $u$ and the closure of smooth functions. See the remaining details in \cite{BalDieSur20lavrentiev}.


\section{Relation with the Lavrentiev gap}

Next we show that the approach used in this paper can be also applied to establishing the Lavrentiev gap. 

Let us state the corresponding conditional result in the spirit of Section~\ref{ssec:condition}.This result repeats one of the results in \cite{BalDieSur20lavrentiev}, but is obtained in a different way.

Let $\xi\in C_0^\infty(\Omega)$ be such that $\xi = 1$ in a neighborhood of $\frS = \frC^{d-1}_{\lambda,\gamma}\times \{0\}$ in the subdimensional case and $\frS = \{0\}^{d-1}\times \frC_{\lambda,\gamma}$ in the superdimensional case. Denote $u^\partial = (1-\xi) u$.

\begin{theorem}
Under Assumption~\ref{ass:harmonic2} (or Assumption~\ref{ass:harmonic3} with sufficiently small $\kappa>0$) for some $\eta>0$ there holds
\begin{equation}\label{LaGap}
\inf\{\mathcal{F}(v),\ v\in \eta u^\partial +C_0^\infty(\Omega)\} \geq 2\min\{\mathcal{F}(v),\ v\in \eta u +W_0^{1,\phi(\cdot)}(\Omega)\}.
\end{equation} 
\end{theorem}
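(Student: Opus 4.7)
The plan is to invoke the vector-field construction $\mathbf{b}$ sketched in the previous section. Any $w \in \eta u^\partial + C_0^\infty(\Omega)$ has two crucial properties: (i) $w|_{\partial\Omega} = \eta u|_{\partial\Omega}$ since $\xi$ vanishes near $\partial\Omega$ and the smooth perturbation is compactly supported, and (ii) $w$ is smooth on a neighborhood of the contact set $\frS$ since $u^\partial\equiv 0$ there.

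In the subdimensional case, property (ii) yields $w_+(\bar x) = w_-(\bar x)$ for $\bar x\in \frC^{d-1}_{\lambda,\gamma}$, so the trace identity from the previous section reduces to
$$0 = \int (w_+ - w_-)\,d\mu = \eta + \int_\Omega \mathbf{b}\cdot \nabla w\,dz,$$
i.e., $\bigl|\int_\Omega \mathbf{b}\cdot\nabla w\,dz\bigr| = \eta$. Young's inequality in the Orlicz setting then gives, for any $s>0$,
$$s\eta \leq \int s|\mathbf{b}|\,|\nabla w|\,dx \leq \mathcal{F}(w) + \mathcal{F}^*(s|\mathbf{b}|) \leq \mathcal{F}(w) + C_1\,\mathcal{F}^*(sb),$$
where I used $|\mathbf{b}|\lesssim b$ together with the $\Delta_2$-condition on $\phi^*$. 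Rearranging yields the universal lower bound $\mathcal{F}(w) \geq s\eta - C_1\mathcal{F}^*(sb)$, valid for every smooth competitor.

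Now invoking Assumption~\ref{ass:harmonic2}, or Assumption~\ref{ass:harmonic3} with sufficiently small $\kappa$, I choose $\eta,s>0$ with $\mathcal{F}(\eta u) + \mathcal{F}^*(sb) < \kappa\eta s$. Then
$$\mathcal{F}(w) \geq (1-C_1\kappa)\,s\eta, \qquad \min\mathcal{F}\bigl(\eta u + W_0^{1,\phi(\cdot)}(\Omega)\bigr) \leq \mathcal{F}(\eta u) < \kappa s\eta.$$
Taking $\kappa \leq 1/(C_1+2)$ forces $\mathcal{F}(w) \geq 2\min\mathcal{F}\bigl(\eta u + W_0^{1,\phi(\cdot)}(\Omega)\bigr)$, uniformly in $w$, and taking the infimum over $w\in\eta u^\partial + C_0^\infty(\Omega)$ proves~\eqref{LaGap}.

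The superdimensional case is handled analogously, with Proposition~\ref{prop:sd1} replacing the trace identity: for smooth $w$ the pre-Cantor telescoping sum $S_w(\eta) = \sum_j \bigl(w(\bar 0,\xi^+_{m,j}) - w(\bar 0,\xi^-_{m,j})\bigr) = \int_{\frC^{(m)}_{\lambda,\gamma}} \partial_{x_d} w(\bar 0,t)\,dt$ tends to $0$ as $m\to \infty$ because $|\frC^{(m)}_{\lambda,\gamma}| = (2\lambda)^m m^\gamma \to 0$ in the supercritical regime, while the same gap estimate as in the proof of Proposition~\ref{prop:sd1} delivers $|S_w(\eta) - \eta| \leq C\int|\nabla w|\, b\,dx$; letting $m\to\infty$ yields $\int|\nabla w|\, b\,dx \geq \eta/C$, and the Young's inequality step concludes as in the subdimensional case. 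The main obstacle will be ensuring that the trace representation of the previous section (and analogously the jump identity of Proposition~\ref{prop:sd1}) transfers cleanly from the minimizer $v_\eta$ to arbitrary smooth competitors — specifically, verifying that the Dirichlet condition $w|_{\partial\Omega} = \eta u|_{\partial\Omega}$ suffices for the derivation and that the pointwise domination $|\mathbf{b}|\lesssim b$ genuinely yields the $\Delta_2$-transfer $\mathcal{F}^*(s|\mathbf{b}|) \lesssim \mathcal{F}^*(sb)$ with a constant controllable independently of the chosen $(\eta,s)$.
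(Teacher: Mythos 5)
Your proof is correct, and it reorganizes the paper's own argument by routing explicitly through the divergence-free vector field $\mathbf{b}$ of Section~8 rather than the unsigned trace alternative the paper uses in its proof of this theorem. In the paper's argument one decomposes $\eta\leq I(\eta)+II(\eta)$ with $I=\int|w_+-w_-|\,d\mu$ and $II\lesssim\int|\nabla w|\,b\,dx$, observes that $I=0$ for smooth $w$, and then argues by contradiction via Lemma~\ref{lem:riesz-sublinear00} applied to $Q=2$ quasi-minimizers: if $\mathcal{F}(w)\leq 2\mathcal{F}(\eta u)$, the quantity $\int|\nabla w|\,b\,dx$ would be too small. You instead use the signed jump identity $\int_\Omega\mathbf{b}\cdot\nabla w\,dz=-\eta$ (valid for smooth $w$ since $w_+=w_-$ pointwise on $\frC^{d-1}_{\lambda,\gamma}$), and the Young step furnishes an unconditional lower bound $\mathcal{F}(w)\geq s\eta-C_1\mathcal{F}^*(sb)$ for every smooth competitor; the assumption then closes the argument directly without the contradiction or the $Q=2$ detour. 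The two routes are the same estimate written with or without sign — your Young step is Lemma~\ref{lem:riesz-sublinear00} unwound — but your version is a little more transparent about where the factor $2$ in~\eqref{LaGap} comes from (it is essentially free, obtained by shrinking $\kappa$).

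The concerns you flag at the end all check out. The trace representation transfers to smooth competitors because the derivation in Section~\ref{ssec:trace} (and the bound~\eqref{superdM} in the superdimensional case) applies to any $W^{1,1}$ function whose restricted Riesz potential against $b$ is finite, which is automatic for $w\in C^\infty(\overline\Omega)$ together with $b\in L^1$. The modular transfer $\mathcal{F}^*(s|\mathbf{b}|)\leq C_1\,\mathcal{F}^*(sb)$ holds with $C_1=c^{\lceil\log_2 C\rceil}$, where $c$ is the $\Delta_2$-constant of $\phi^*$ and $C$ is the constant in $|\mathbf{b}|\leq Cb$; this is independent of $(\eta,s)$, so your choice $\kappa\leq 1/(C_1+2)$ is admissible. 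In the superdimensional case you drop the vector-field framing and argue directly from the pre-Cantor telescoping sum, which in fact mirrors the paper's own superdimensional argument; the identity $S_w(\eta)=\int_{\frC^{(m)}_{\lambda,\gamma}}\partial_{x_d}w(\bar 0,t)\,dt$ and $|\frC^{(m)}_{\lambda,\gamma}|\to 0$ together give the absolute-continuity claim the paper invokes. One cosmetic slip: when $\lambda=0$ (the case $p_0=d+0$) the pre-Cantor measure is $2^m\exp(-2^{m/\gamma})$, not $(2\lambda)^m m^\gamma$; it still tends to zero, so the conclusion is unaffected.
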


\begin{proof}
First we recall the subdimensional case and the argument which led to showing the discontinuity of the minimizer (see Section~\ref{ssec:subdis}). Let $w_\eta\in \eta u^\partial+W_0^{1,\phi(\cdot)}(\Omega)$. Repeating the beginning of the proof of Theorem~\ref{T:diff_trace} we obtain subsequently
\begin{gather*}
|(w_\eta)_\pm (\bar x) \mp \eta/2 | \leq C(d) I_1^{\pm}[|\nabla w_\eta| \indicator_\Omega] (\bar x, 0),\\
\int\limits_{\frC} |(w_\eta)_\pm (\bar x) \mp \eta/2 |\, d\mu^{d-1}_{\lambda,\gamma}  (\bar x) \leq C(d) \int\limits_{(-1,1)^{d-1}} I_1^{\pm}[|\nabla w_\eta|\indicator_\Omega] (\bar x, 0)\, d \mu^{d-1}_{\lambda,\gamma}  (\bar x)\\
\leq C(d,\lambda,\gamma) \int\limits_\Omega  |\nabla w_\eta|b\, dz := J(\eta).
\end{gather*}
By the triangle inequality \eqref{triangle} we get
$$
\eta \leq \int\limits_{\frC} |(v_\eta)_+ (\bar x) - (v_\eta)_+ (\bar x)|\, d\mu^{d-1}_{\lambda,\gamma} (\bar x)+ 2 J(\eta) : = I(\eta)+II(\eta).
$$
This implies the following alternative: 
\begin{align*}
&\text{either}\quad I(\eta):=\int |(w_\eta)_{+}(\bar x) - (w_\eta)_{-}(\bar x)|\, d\mu(\bar x) \geq \frac{\eta}{2}\\
&\text{or}\quad II(\eta):=2J(\eta)\geq \frac{\eta}{2}.
\end{align*}
The first case $I(\eta)\geq \eta/2$ is the one we use in the proof of Theorem~\ref{T:diff_trace}, it gives the discontinuity of the $W$-minimizer, since the condition that $\mathcal{F}(w_\eta)\leq \mathcal{F}(\eta u)$ implies under Assumption~\ref{ass:harmonic2} (or Assumption~\ref{ass:harmonic3} with sufficiently small $\kappa$) that $II(\eta)<\eta/4$ for some $\eta>0$. 

On the other hand, for any function $w_\eta\in \eta u^\partial + C_0^\infty(\Omega)$ the difference of ``upper'' and ``lower'' traces is zero ($I(\eta)=0$), which leads us to the second case. By Lemmas~\ref{lem:riesz-sublinear00}, \ref{lem:riesz-sublinear01}, \ref{lem:riesz-sublinear} with $Q=2$, under Assumption~\ref{ass:harmonic2} (or Assumption~\ref{ass:harmonic3} with sufficiently small $\kappa$)
$$
\text{the inequality}\quad \mathcal{F}(w_\eta)\leq 2\mathcal{F}(\eta u)\quad \text{implies}\quad \int\limits_\Omega |\nabla w_\eta|b\, dz< \eta/4
$$
for some $\eta>0$. This means that 
$$
\mathcal{F}(w_\eta)\geq 2\mathcal{F}(\eta u).
$$ 
Recalling that $w_\eta$ is an arbitrary function assuming the boundary value $u\eta$ we see that the $H$-minimizer will have the energy greater than $2\mathcal{F}(\eta u)$. This is the Lavrentiev gap~\eqref{LaGap}.

Let us proceed to the superdimensional case. Recall the proof of Proposition~\ref{prop:sd1}, which was based on the estimate~\ref{superdM}. As above, we can apply the argument of Proposition~\ref{prop:sd1} to any function $w_\eta$ taking the same boundary values as $\eta u$, which gives (in the notation of Section~\ref{sec:super})
\begin{gather*}
I(\eta)+II(\eta):= \\
\sum_{j=1}^{2^m} (w_\eta(\bar{0},\xi^{+}_{m,j}) - w_\eta(\bar{0},\xi^{-}_{m,j}))+\sum_{j=0}^{2^m}(w_\eta(\bar{0}, \xi^{-}_{m,j+1}) - w_\eta(\bar{0}, \xi^{+}_{m,j}))  \\
=w_\eta(\bar 0,1)-w_\eta(\bar 0,-1)=(\eta/2)-(-\eta/2)=\eta.
\end{gather*}
Recall that here for $m\in \mathbb{N}$ and $j=1,\ldots,2^m$ by $\xi^{+}_{m,j}$ and $\xi^{-}_{m,j}$ we denote the right and left end points of pre-Cantor intervals of the $m$-th generation, with $\xi^+_{m,0}=-1$ and $\xi^-_{m,2^m}=1$. Thus 
$$
\text{either}\quad I(\eta)\geq \eta/2\quad \text{or}\quad II(\eta)\geq \eta/2.
$$
The case $I(\eta)\geq \eta/2$ is the one we use in the proof above (for $w_\eta=v_\eta$), since by Assumption~\ref{ass:harmonic2} (or Assumption~\ref{ass:harmonic3} with small $\kappa>0$) for the minimizer $v_\eta$ with some $\eta>0$ we get $II(\eta)\leq \eta/4$. 

On the other hand, for any smooth function $I(\eta)\to 0$ as $m\to 0$ (by the absolute continuity), so for large $m$ we fall into the second case $II(\eta)\geq \eta/2$. By the estimate \eqref{superdM} this implies 
$$
C(d)\int\limits_\Omega |\nabla w_\eta|b\,dx \geq  \eta/2.
$$
By Lemmas~\ref{lem:riesz-sublinear00}, \ref{lem:riesz-sublinear01} with $Q=2$, under Assumption~\ref{ass:harmonic2} (or Assumption~\ref{ass:harmonic3} with sufficiently small $\kappa$)
$$
\text{the inequality}\quad\mathcal{F}(w_\eta)\leq 2\mathcal{F}(\eta u)\quad \text{implies}\quad C(d)\int\limits_\Omega |\nabla w_\eta| b\, dz< \eta/4
$$
for some $\eta>0$. This means that 
$$
\mathcal{F}(w_\eta)\geq 2\mathcal{F}(\eta u)
$$
and thus the $H$-minimizer will have the energy greater than $2\mathcal{F}(\eta u)$. This is the Lavrentiev gap~\eqref{LaGap}.

\end{proof}




The  research of Anna Balci and Lars Diening is funded  by the Deutsche Forschungsgemeinschaft (DFG, German Research Foundation) - SFB 1283/2 2021 - 317210226 at the Bielefeld University. The Research of Anna  Balci is  supported by Charles University  PRIMUS/24/SCI/020 and Research Centre program \\ No. UNCE/24/SCI/005.  

\printbibliography

\end{document}